\newcommand{\Ref}[1]{(\ref{#1})}
\newcommand{\findem}{\hfill $\square$\\ \medskip}
\newtheorem{thm}{Theorem}
\newtheorem{prop}[thm]{Proposition}
\newtheorem{lemma}[thm]{Lemma}
\newtheorem{Def}[thm]{Definition}
\newcommand{\pare}[1]{\left( #1 \right)}
\newcommand{\parfrac}[2]{\left( \frac{#1}{#2} \right)}
\newcommand{\ite}{~\smallskip \\ \noindent $\bullet$ }
\newcommand{\al}{\alpha}
\newcommand{\be}{\beta}
\newcommand{\ga}{\gamma}
\newcommand{\De}{\Delta}
\newcommand{\Dee}{(\Delta\cup\{1\})}
\newcommand{\de}{\delta}
\newcommand{\ka}{\kappa}
\newcommand{\bb}[1]{\overline{#1}}
\newcommand{\bu}{\bullet}
\newcommand{\bR}{\bb{R}}
\newcommand{\mS}{\mathbb{S}}
\newcommand{\mSb}{\overline{\mathbb{S}}}
\newcommand{\mM}{\mathcal{M}}
\newcommand{\mC}{\mathcal{C}}
\newcommand{\mD}{\mathcal{D}}
\newcommand{\mT}{\mathcal{T}}
\newcommand{\mA}{\mathcal{A}}
\newcommand{\mAo}{\widetilde{\mathcal{A}}}
\newcommand{\mAh}{\widehat{\mathcal{A}}}
\newcommand{\ddz}{\frac{d}{dz}}
\newcommand{\pduu}[1]{\frac{\partial^{#1}}{{\partial u}^{#1}}}
\newcommand{\pdu}[2]{\left.\frac{\partial^{#1}}{{\partial u}^{#1}}#2\right|_{u=1}}
\newcommand{\bM}{\textrm{M}}
\newcommand{\gA}{\textbf{A}}
\newcommand{\gAo}{\widetilde{\textbf{A}}}
\newcommand{\gAh}{\widehat{\textbf{A}}}
\newcommand{\gB}{\textbf{B}}
\newcommand{\gBh}{\widehat{\textbf{B}}}
\newcommand{\gC}{\textbf{C}}
\newcommand{\gD}{\textbf{D}}
\newcommand{\gM}{\textbf{M}}
\newcommand{\gT}{\textbf{T}}
\newcommand{\gTo}{\widetilde{\textbf{T}}}
\newcommand{\gTh}{\widehat{\textbf{T}}}
\newcommand{\gF}{\textbf{F}}
\newcommand{\gFh}{\widehat{\textbf{F}}}
\newcommand{\gG}{\textbf{G}}
\newcommand{\gY}{\textbf{Y}}
\newcommand{\gYo}{\widetilde{\textbf{Y}}}
\newcommand{\gYh}{\widehat{\textbf{Y}}}
\newcommand{\gW}{\textbf{W}}
\newcommand{\gZ}{\textbf{Z}}
\newcommand{\gN}{\textbf{O}}
\newcommand{\mN}{\mathcal{O}}
\newcommand{\mNh}{\mathcal{P}}
\newcommand{\gNh}{\textbf{P}}
\newcommand{\gNt}{\textbf{Q}}
\newcommand{\NN}{\mathbb{N}}
\newcommand{\DD}{\mathbb{D}}
\newcommand{\RR}{\mathbb{R}}
\newcommand{\CC}{\mathbb{C}}
\newcommand{\TT}{\mathbb{T}}
\newcommand{\TTh}{\mathbb{P}}
\newcommand{\prob}[1]{\mathbb{P}\left[#1\right]}
\newcommand{\ind}{\mathbb{I}_{[0,\infty[}(t)}
\newcommand{\Ee}[1]{\mathbb{E}[#1]}
\newcommand{\EE}[1]{\mathbb{E}\!\left[#1\right]}
\newcommand{\td}{\stackrel{d}{\rightarrow}}
\newcommand{\rX}{\textbf{X}}
\newcommand{\rU}{\textbf{U}}
\title[Simplicial decompositions of surfaces]{Enumerating simplicial decompositions of surfaces with boundaries}
\author[O. Bernardi]{Olivier Bernardi}
\address{O. Bernardi:  CNRS, Département de Mathématiques, Université Paris-Sud, 91405 Orsay, France}
\email{olivier.bernardi@math.u-psud.fr}
\author[J. Rué]{Juanjo Rué}
\address{J. Rué: Departament de Matemàtica Aplicada 2, Universitat Politècnica de Catalunya, 08034 Barcelona, Spain}
\email{juan.jose.rue@upc.edu}
\date{\today}
\thanks{The first author is supported by the French ``Agence Nationale
de la Recherche'', project SADA ANR-05-BLAN-0372. The second author
is supported by the Spanish ``Ministerio de Ciencia e
Innovaci\'on'', project MTM2005-08618-C02-01. This work was
partially supported by the Centre de Recerca matemàtica, Spain.}
\begin{document}

\maketitle

\begin{abstract}
It is well-known that the triangulations of the disc with $n+2$ vertices on its boundary are counted by the $n$th Catalan number $C(n)=\frac{1}{n+1}{2n \choose n}$. This paper deals with the generalisation of this problem to any compact surface $\mS$  with boundaries. We obtain the asymptotic number of simplicial decompositions of the surface $\mS$ with $n$ vertices on its boundary. More generally, we determine the asymptotic number of dissections of $\mS$ when the faces are $\de$-gons with $\de$ belonging to a set of admissible degrees $\De\subseteq \{3,4,5,\ldots\}$. We also give the limit laws for certain parameters of such dissections.
\end{abstract}

\section{Introduction}
It is well-known that the triangulations of the disc with $n+2$ vertices on its boundary are counted by
 the $n$th \emph{Catalan number} $C(n)=\frac{1}{n+1}{2n \choose n}$. This paper deals with the
  generalisation of this problem to any compact surface $\mS$ with boundaries. In particular,
   we will be interested in the asymptotic number of simplicial decompositions of the
    surface $\mS$ having $n$ vertices, all of them lying on the boundary. Some simplicial
decompositions of the disc, cylinder and Moebius band are represented in Figure~\ref{fig:examples-simplicial}.\\

\begin{figure}[ht!]\begin{center} \input{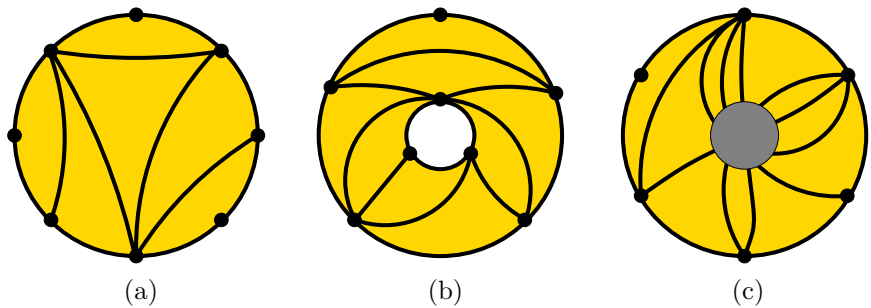}\caption{Simplicial decomposition of (a) the disc, (b) the cylinder, (c) the Moebius band, which is the surface obtained by adding a cross-cap (dashed region) to the disc (points around the cross-cap are identified with their diametral opposite).}\label{fig:examples-simplicial} \end{center}
\end{figure}

In this paper, \emph{surfaces} are \emph{connected} and \emph{compact} 2-dimensional manifolds.
A \emph{map} on a surface $\mS$  is a decomposition of $\mS$ into
 a finite number of 0-cells or \emph{vertices}, 1-cells or \emph{edges} and 2-cells or
\emph{faces}. Maps are considered up to \emph{cell-preserving
homeomorphisms} of the underlying surface (homeomorphisms preserving
each cell of the map). The number of vertices, edges, faces of a map
$M$ on $\mS$ are denoted $v(M)$, $e(M)$ and $f(M)$ respectively.
The quantity $v(M)-e(M)+f(M)$, which depends only on the surface
$\mS$, is called the \emph{Euler characteristic} of the surface and
is denoted $\chi(\mS)$. The \emph{degree} of a face is the number of
incident edges \emph{counted with multiplicity} (an edge is counted
twice if both sides are incident to the face). A map is
\emph{triangular} if every face has degree~3. More generally, given
a set $\De \subset \{1,2,3,\ldots\}$, a map is $\De$-angular if the
degree of any face belongs to $\De$. A map is a \emph{dissection} if
any face of degree $k$ is incident with $k$ distinct vertices and
the intersection of any two faces is either empty, a vertex or an
edge. It is easy to see that triangular maps are dissections if and
only if they have neither loops nor multiple edges.
These maps are called \emph{simplicial decompositions} of $\mS$.\\

In this paper we enumerate asymptotically the simplicial decompositions of an arbitrary surface $\mS$ with boundaries. More precisely, we shall consider the set $\mD_\mS(n)$ of rooted simplicial decomposition of $\mS$ having $n$ vertices, all of them lying on the boundary and prove the asymptotic behaviour
\begin{equation}\label{eq:intro-simplicial}
|\mD_\mS(n)|~\sim_{n\to \infty}~c(\mS)~n^{-3\chi(\mS)/2}~4^n,
\end{equation}
where $c(\mS)$ is a constant which can be determined explicitly. For
instance, the disc $\DD$ has Euler characteristic  $\chi(\DD)=1$ and
the number of simplicial decompositions is
$|\mD_\DD(n)|=C(n\!-\!2)\sim
\frac{1}{\sqrt{\pi}}n^{-3/2}4^n$.\\

We call \emph{non-structuring} the edges which either belong to the boundary of $\mS$ or separate the surface into two parts, one of which is isomorphic to the disc (the other being isomorphic to~$\mS$); the other edges (in particular those that join distinct boundaries) are called \emph{structuring}. We determine the limit laws for the number of structuring edges in simplicial decompositions. In particular, we show that the (random) number $\rU(\mD_\mS,n)$ of structuring edges in a uniformly random simplicial decomposition of a surface $\mS$ with $n$ vertices, rescaled by a factor $n^{-1/2}$, converges in distribution toward a continuous random variable which only depends on the Euler characteristic of $\mS$. \\

We also generalise the enumeration and limit law results to
$\De$-angular dissections for any set of degrees
$\De\subseteq\{3,4,5,\ldots\}$. Our results are obtained by exploiting
a decomposition of the maps in $\mD_\mS(n)$ which is reminiscent of
Wright's work on graphs with fixed excess
\cite{Wright:graph-fixed-excess,Wright:graph-fixed-excess-2} or,
more recently, of work by Chapuy, Marcus and Schaeffer on the
enumeration of unicellular maps
\cite{Chapuy:bijection-quadrangulations-higher-genus}. This decomposition easily translates into an equation satisfied by the corresponding generating function. We then apply classical enumeration techniques based on the analysis of the generating function singularities \cite{Flajol:Sing1,FlajoletSedgewig:analytic-combinatorics}. Limit laws results are obtained by applying the so-called \emph{method of moments} (see \cite{Billingsley:probability}).\\

This paper recovers and extends the asymptotic enumeration and limit
law results obtained via a recursive approach  for the cylinder and
Moebius band in \cite{GaoXiaoWang:triangulations-cylinder},
\cite{Noy:dissection-projective} and \cite{Rue:dissection-cylinder}.
As in these papers, we will be dealing with maps having all their
vertices on the boundary of the surface. This is a sharp restriction
which contrasts with most papers in map enumeration.
However, a remarkable feature of the asymptotic result~\Ref{eq:intro-simplicial} (and the generalisation we obtain for arbitrary set of degrees $\De\subseteq\{3,4,5,\ldots\}$) is  the linear dependency of the polynomial growth exponent in the Euler characteristic of the underlying surface. Similar results were obtained by a recursive method for general maps by Bender and Canfield in \cite{Bender:maps-orientable-surfaces} and for maps with certain degree constraints by Gao in \cite{Gao:degree-restricted-map-general-surface}. This feature as also been re-derived for general maps using a bijective approach in \cite{Chapuy:bijection-quadrangulations-higher-genus}. \\

The outline of the paper is as follows. In Section
\ref{section:definitions}, we recall some definitions about maps and
set our notations. We enumerate rooted triangular maps in
Section~\ref{section:triangular} and then extend the results to
$\De$-angular maps for a general set $\De\subseteq\{3,4,5,\ldots\}$
in Section~\ref{section:D-angular}. It is proved that the number of
rooted $\De$-angular maps with $n$ vertices behaves asymptotically
as $c(\mS,\De) n^{-3\chi(\mS)/2} {\rho_\De}^n$, where $c(\mS,\De)$ and
$\rho_\De$ are constants. In Section~\ref{section:maps-to-dissections}, we prove that the number
of $\De$-angular maps  and of $\De$-angular dissection with $n$
vertices on $\mS$ are asymptotically equivalent as $n$ goes to infinity.
Lastly, in Section~\ref{section:limit-laws}, we study the limit laws of the (rescaled) number of structuring edges in uniformly random $\De$-angular dissections of size $n$. In the Appendix, we give a method for determining the constants $c(\mS,\De)$ explicitly.\\

\section{Definitions and notations}\label{section:definitions}
We denote $\NN=\{0,1,2,\ldots\}$ and $\NN^{\geq k}=\{k,k+1,k+2,\ldots\}$. For any set $\De\subseteq \NN$, we denote by $\gcd(\De)$ the greatest common divisor of $\De$. For any power series $\gF(z)=\sum_{n\geq 0}f_nz^n$, we denote by $[z^n]\gF(z)$ the coefficient $f_n$. We also write $\gF(z)\leq \gG(z)$ if  $[z^n]\gF(z)\leq [z^n]\gG(z)$ for all $n\in\NN$.\\

\noindent \textbf{Surfaces.} \\
Our reference for surfaces is \cite{Mohar:graphs-on-surfaces}. Our \emph{surfaces} are compact, connected and their boundary is homeomorphic to a finite set of disjoint circles. By the \emph{Classification of surface Theorem},
such a surface $\mS$ is  determined, up to homeomorphism, by their Euler characteristic $\chi(\mS)$, the number $\be(\mS)$ of connected components of their boundary and by whether or not they are orientable. 
The orientable surfaces without boundaries are obtained by adding $g\geq 0$
\emph{handles} to the sphere (hence obtaining the $g$-torus with
Euler characteristic $\chi=2-2g$) while non-orientable surfaces without boundaries are
obtained by adding $k>0$ \emph{cross-caps} to the sphere (hence
obtaining the non-orientable surface with Euler characteristic
$\chi=2-k$). For a surface $\mS$ with boundaries, we denote by $\mSb$ the
surface (without boundary) obtained from $\mS$ by gluing a disc on
each of the $\be(\mS)$ boundaries. Observe that the
Euler characteristic $\chi(\mSb)$ is equal to $\chi(\mS)+\be(\mS)$
(since a map on $\mS$ gives rise to a map on $\mSb$ simply by
gluing a face along each of the $\be(\mS)$ boundaries of $\mS$).\\

\noindent \textbf{Maps, rooting and duality.}\\
The \emph{degree} of a vertex in a map is the number of incident edges counted with multiplicity (loops are counted twice). A vertex of degree 1 is called a \emph{leaf}. Given a set $\Delta\subset \NN^{\geq 1}$, a map is called $\Delta$-valent if the degree of every vertex belongs to $\Delta$.\\

An edge of a map has two \emph{ends} (incidence with a vertex)  and
either one or two \emph{sides} (incidence with a face) depending on
whether the edge belongs to the boundary of the surface. A map is
\emph{rooted} if an end and a side of an edge are distinguished as
the \emph{root-end} and \emph{root-side} respectively\footnote{The
rooting of maps on orientable surfaces usually omits the choice of a
root-side because the underlying surface is \emph{oriented} and maps
are considered up to \emph{orientation preserving} homeomorphism.
Our choice of a root-side is equivalent in the orientable case to the choice of an orientation of the surface.}. 
The vertex, edge and face defining these incidences are the \emph{root-vertex},
\emph{root-edge} and \emph{root-face}, respectively.
Rooted maps are considered up to homeomorphism preserving the root-end and -side. In figures, the root-edge will be indicated as an oriented edge pointing away from the root-end and crossed by an arrow pointing toward the root-side. A map is \emph{boundary-rooted} is the root-edge belongs to the boundary of the underlying surface; it is \emph{leaf-rooted} if the root-vertex is a leaf. \\

The \emph{dual} $M^*$ of a map $M$ on a surface without boundary is
a map obtained by drawing a vertex of $M^*$ in each face of $M$ and
an edge of $M^*$ across each edge of $M$. If the map $M$ is rooted,
the root-edge of $M^*$ corresponds to the root-edge $e$ of $M$; the
root-end and root-side of $M^*$ correspond respectively to the side
and end of $e$ which are \emph{not} the root-side and root-end of $M$.
We consider now a (rooted) map $M$ on a surface $\mS$ with boundary. Observe that the (rooted) map $M$  gives raise to a (rooted) map $\bb{M}$ on $\mSb$ by gluing a disc (which become a face of $\overline{M}$) along each components of the boundary of $\mS$. We call \emph{external} these faces of $\bb{M}$ and the corresponding vertices of the dual map $\bb{M}^*$. The \emph{dual} of a map $M$ on a surface $\mS$ with boundary is the map  on $\mSb$ denoted $M^*$ which is obtained from $\overline{M}^*$ by splitting each external vertex of $\overline{M}^*$ of degree $k$, by $k$ special vertices called \emph{dangling leaves}. An example is given in Figure~\ref{fig:duality}. 
Observe that for any set $\De\subseteq\NN^{\geq 2}$, duality establishes a bijection between  boundary-rooted $\Delta$-angular maps on $\mS$ surface $\mS$ and leaf-rooted $\Dee$-valent maps on $\mSb$ having $\be(\mS)$ faces, each of them being incident to at least one leaf.\\

\begin{figure}[ht!]\begin{center} \input{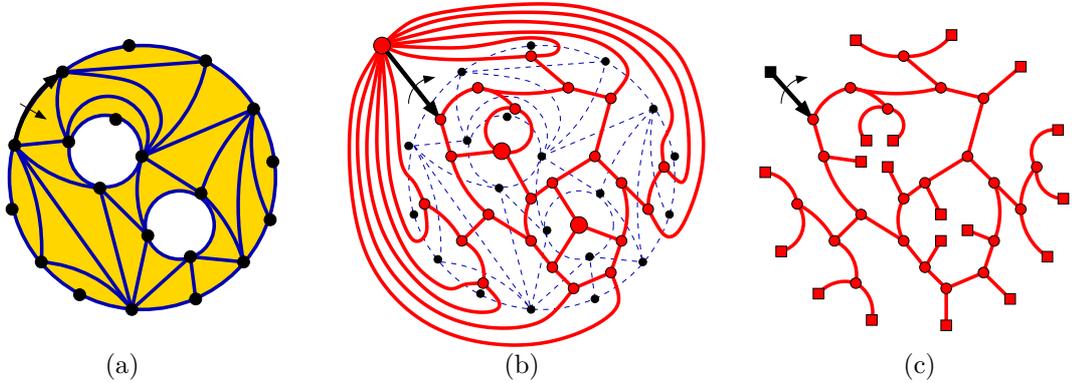}\caption{(a) A rooted map $M$ on the surface obtained by removing 3 disjoint discs to the sphere. (b) The map $\bb{M}^*$ on the sphere. (c) The dual map $M^*$ on the sphere.}\label{fig:duality} \end{center}
\end{figure}

\noindent \textbf{Sets of maps and generating functions.}\\
A \emph{plane tree} is a map on the sphere having a single face. For any set
$\De\subseteq \NN^{\geq 3}$, we denote by  $\mT_\De(n)$ the (finite)
set of $(\Delta\cup\{1\})$-valent leaf-rooted plane trees with $n$
non-root leaves. We denote by $T_\De(n)$ the cardinality of
$\mT_\De(n)$ and by $\gT_\De(z)=\sum_{n\geq 0}T_\De(n)z^n$ the
corresponding generating function.  For the special case of
$\De=\{3\}$, the subscript $\Delta$ will be omitted so that $\mT(n)$
is the set of leaf-rooted \emph{binary trees} with $n$ non-root
leaves. Hence, $T(n+1)=C(n)$ is the $n$th Catalan number
$\frac{1}{n+1}{2n \choose n}$ and $\gT(z)=\sum_{n\geq
0}T(n)z^n=\frac{1-\sqrt{1-4z}}{2}$. A \emph{doubly-rooted} tree is a
leaf-rooted trees having a \emph{marked} leaf distinct from the
root-vertex. Observe that the generating function of
$(\Delta\cup\{1\})$-valent doubly-rooted trees counted by number of
non-root non-marked leaves is $\gT_\De'(z)=\sum_{n\geq 0}(n+1)T_\De(n+1)z^n$. \\

Let $\mS$ be a surface with boundary. For any set $\De\subset
\NN^{\geq 3}$, we denote by $\mM^\De_\mS(n)$  the set of  boundary-rooted
$\De$-angular maps on the surface $\mS$ having $n$ vertices, all of
them lying on the boundary. It is easy to see  that  the number of
edges of maps in $\mM^\De_\mS(n)$ is bounded, hence the set
$\mM^\De_\mS(n)$ is finite. Indeed, for
any map $M$ in $\mM^\De_\mS$ the \emph{Euler relation}  gives
$n-e(M)+f(M)=\chi(\mS)$, the \emph{relation of incidence between
faces and edges} gives $3f(M)+n \leq 2e(M)$, and solving for $e(M)$
gives $e(M)\leq 2n-3\chi(\mS)$. We define $\mD^\De_\mS(n)$ as the subset of (boundary-rooted)
maps in  $\mM^\De_\mS(n)$ which are dissections. We write $\mM^\De_\mS=\cup_{n\geq 1}\mM^\De_\mS(n)$  for the set of all $\De$-angular maps, $\bM^\De_\mS(n)=|\mM^\De_\mS(n)|$ for the number of them having $n$ vertices and $\gM^\De_\mS(z)=\sum_{n\geq 1} \bM^\De_\mS(n)z^n$ for the corresponding generating function. We adopt similar conventions for the set  $\mD^\De_\mS(n)$. Lastly, the subscript $\Delta$ will be omitted in all these notations whenever $\Delta=\{3\}$. For instance, $\gD_\mS(z)$ is the generating function of boundary-rooted simplicial decompositions of the surface $\mS$.\\


\section{Enumeration of triangular maps}\label{section:triangular}
In this section, we consider triangular maps on an arbitrary surface $\mS$ with boundary. We shall enumerate the maps in $\mM_\mS\equiv \mM^{\{3\}}_\mS$ by exploiting a decomposition of the dual $\{1,3\}$-valent maps on $\mSb$. More precisely, we  define a decomposition for maps in the  set $\mA_\mS$ of leaf-rooted $\{1,3\}$-valent maps on $\mSb$ having $\be(\mS)$ faces. Recall that, by duality, the triangular maps in $\mM_\mS$ are in bijection with the maps of $\mA_\mS$ such that each face is incident to at least one leaf (see Figure~\ref{fig:duality}).\\

We denote by $\mA_\mS(n)$ the set of maps in $\mA_\mS$ having $n$ leaves (including the root-vertex). 
If the surface $\mS$ is the disc $\DD$, then $\mA_\DD(n)$ is the set of leaf-rooted binary trees having  $n$ leaves and $|\mM_\DD(n)|=|\mA_\DD(n)|=C(n-2)$, where $C(n)=\frac{1}{n+1}{2n \choose n}$. We now suppose that $\mS$ is not the disc (in particular, the Euler characteristic $\chi(\mS)$ is non-positive). We call \emph{cubic scheme} of the surface $\mS$, a leaf-rooted map on $\mSb$ with $\be(\mS)$ faces, such that that every non-root vertex has degree 3. Observe that one obtains a cubic scheme of the surface $\mS$ by starting from a map in $\mA_\mS$, deleting recursively the non-root vertices of degree 1 and then \emph{contracting} vertices of degree 2 (replacing the two incident edges by a single edge). This process is represented in Figure~\ref{fig:decomposition-cubic}. \\

\begin{figure}[ht!]\begin{center} \input{decomposition-cubic.pstex_t}\caption{The cubic scheme of a map in $\mA_\mS$.}\label{fig:decomposition-cubic} \end{center}
\end{figure}

The cubic scheme obtained from a map $A\in \mA_\mS$ (which is clearly independent of the order of deletions of leaves and of contractions of vertices of degree 2) is called the \emph{scheme} of $A$. The vertices of the scheme $S$ can be
identified with some vertices of $A$. Splitting these vertices gives a
set of doubly-rooted trees, each of them associated to an edge of
the scheme $S$ (see Figure~\ref{fig:definition-Phi}). To be more
precise, let us choose arbitrarily a canonical end and side for each
edge of $S$. Now, the map $A$ is obtained by replacing each edge $e$
of $S$ by a doubly-rooted binary tree $\tau_e^\bu$ in such a way
that the canonical end and side of the edge $e$ coincide with the
root-end and root-side of the tree $\tau_e^\bu$. It is easy to see
that any cubic scheme of the surface $\mS$ has $2-3\chi(\mS)$ edges
(by using Euler relation together with the relation of incidence
between edges and vertices).  Therefore, upon choosing an arbitrary
labelling and canonical end and side for the edges of every scheme
of $\mS$, one can define a mapping $\Phi$ on $\mA_\mS$ by setting
$\Phi(A)=(S,(\tau_1^\bu,\ldots,\tau_x^\bu))$, where $S$ is the
scheme of the map $A$ and $\tau_i^\bu$ is the doubly-rooted tree
associated with the $i$th edge of $S$. Reciprocally, any pair
$(S,(\tau_1^\bu,\ldots,\tau_x^\bu))$ defines a map in $\mA_\mS$,
hence the following lemma.

\begin{lemma}\label{lem:decomposition-cubic}
The mapping $\Phi$ is a bijection between
\begin{itemize}
\item the set $\mA_\mS$ of leaf-rooted $\{1,3\}$-valent maps on $\mSb$ having $\be(\mS)$ faces,
\item the pairs made of a cubic scheme of $\mS$ and a sequence of $2-3\chi(\mS)$ doubly-rooted binary trees.
\end{itemize}
Moreover, the number of non-root leaves of a map $A\in \mA_\mS$ is
equal to the total number of leaves which are neither marked- nor root-leaves in the associated sequence of doubly-rooted trees.
\end{lemma}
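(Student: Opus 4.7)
The plan is to make rigorous the description of $\Phi$ sketched just before the statement and to exhibit an explicit inverse by substitution.

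First, I would verify that the reduction procedure (iterated deletion of non-root vertices of degree $1$, followed by iterated contraction of vertices of degree $2$) is confluent and hence assigns to every $A\in\mA_\mS$ a well-defined cubic scheme $S$. Non-adjacent operations manifestly commute, each operation strictly decreases the number of edges, and the standard diamond lemma then gives confluence. The surviving vertices of $A$ are in canonical bijection with the vertices of $S$. The edge count $x = 2 - 3\chi(\mS)$ follows from Euler's formula on $\mSb$ together with $f(S) = \be(\mS)$ and the handshake relation $2e(S) = 3(v(S)-1) + 1$, as already indicated in the text.

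After fixing once and for all a labelling and a canonical end and side for each edge of each cubic scheme, I would then define $\Phi$ explicitly. For $A\in\mA_\mS$ with scheme $S$, split each vertex of $A$ identified with a branch vertex of $S$ into three leaves (one per incident edge of $A$); the resulting forest has exactly $x$ connected components, each a binary tree attached to a unique edge $e_i$ of $S$ and bearing two distinguished leaves (two split-leaves, or one split-leaf and the root-leaf of $A$ in the component adjacent to it). The canonical end of $e_i$ designates which of these two distinguished leaves plays the role of root-leaf in the doubly-rooted tree $\tau_i^\bu$ (the other becoming the marked leaf), while the canonical side determines the root-side. This defines the pair $\Phi(A) = (S, (\tau_1^\bu, \ldots, \tau_x^\bu))$.

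The inverse is substitution: given $(S, (\tau_1^\bu, \ldots, \tau_x^\bu))$, one replaces each edge $e_i$ of $S$ by $\tau_i^\bu$, identifying its root-leaf (resp.\ marked leaf) with the canonical endpoint (resp.\ other endpoint) of $e_i$ and matching the root-side of $\tau_i^\bu$ to the canonical side of $e_i$. The main point of care is that this local substitution is compatible with the cellular embedding on $\mSb$; this amounts to verifying that the rotation-system data around each branch vertex of $S$, together with the ribbon structures of the $\tau_i^\bu$, unambiguously prescribe the cyclic order of edges at every vertex of the reconstructed map and that the $\be(\mS)$ faces are correctly realised. I expect this bookkeeping to be the only delicate step; once it is settled, the two constructions are mutually inverse by design, and the leaf-counting statement is immediate, since the non-root leaves of $A$ are in obvious bijection with the leaves of the $\tau_i^\bu$ that are neither root-leaves nor marked leaves.
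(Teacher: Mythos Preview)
Your proposal is correct and follows essentially the same approach as the paper: the paper's ``proof'' is in fact just the paragraph preceding the lemma, which describes the scheme extraction, the splitting of the surviving degree-$3$ vertices, and then asserts that ``reciprocally, any pair $(S,(\tau_1^\bullet,\ldots,\tau_x^\bullet))$ defines a map in $\mA_\mS$, hence the following lemma.'' Your write-up is actually more careful than the paper's, in that you make explicit the confluence argument (which the paper dismisses as ``clearly independent of the order'') and flag the embedding/rotation-system bookkeeping as the one point requiring attention, whereas the paper simply takes the inverse substitution for granted.
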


\begin{figure}[ht!]\begin{center} \input{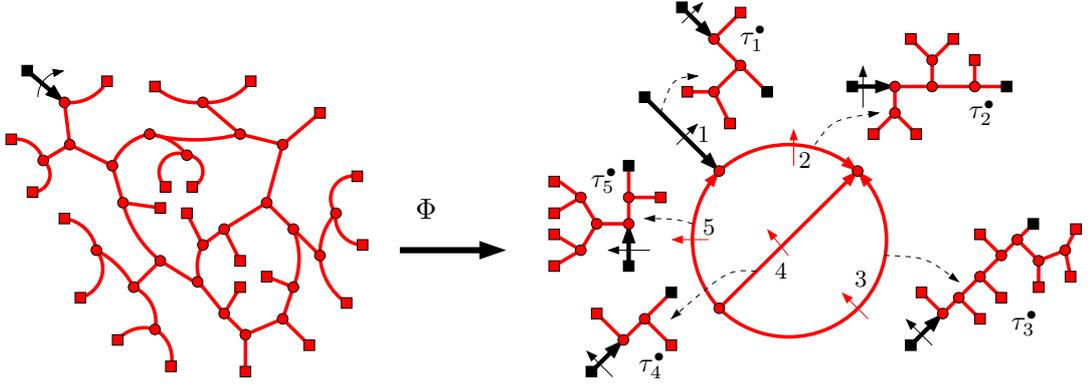}\caption{The bijection $\Phi$. Root-leaves and marked leaves are indicated by black squares.}\label{fig:definition-Phi} \end{center}
\end{figure}

We  now exploit Lemma~\ref{lem:decomposition-cubic} in order to prove the following Theorem.

\begin{thm}\label{thm:nb-triangular}
Let $\mS$ be any surface with boundary distinct from the disc $\DD$. The asymptotic number of boundary-rooted triangulations on $\mS$ with $n$ vertices, all of them lying on the boundary is
\begin{equation}\label{eq:asympt-simplicial}
M_\mS(n)~=_{n\to \infty}~~
\frac{a(\mS)}{4\Gamma(1\!-\!3\chi(\mS)/2)}~ n^{-3\chi(\mS)/2} ~4^n\, \left(1+O\left(n^{-1/2}\right)\right),
\end{equation}
where $a(\mS)$ is the number of cubic schemes of $\mS$ and $\Gamma$ is the usual Gamma function.
\end{thm}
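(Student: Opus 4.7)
The plan is to work with the dual picture. By the duality bijection recalled in Section~\ref{section:definitions}, $M_\mS(n)$ equals the number of maps in $\mA_\mS(n)$ in which each of the $\be(\mS)$ faces of $\mSb$ carries at least one leaf. I will first compute the asymptotics of the unrestricted cardinality $|\mA_\mS(n)|$ via Lemma~\ref{lem:decomposition-cubic}, and then show that the ``each face carries a leaf'' condition only costs a relative factor $1+O(n^{-1/2})$.

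Lemma~\ref{lem:decomposition-cubic} translates directly into the generating function identity
\[
\sum_{n\ge 1}|\mA_\mS(n)|\,z^{n-1} \;=\; a(\mS)\,\gT'(z)^{2-3\chi(\mS)},
\]
where $\mA_\mS$ is enumerated by number of non-root leaves. Since $\gT(z)=(1-\sqrt{1-4z})/2$, one has $\gT'(z) = (1-4z)^{-1/2}$, so this generating function equals $a(\mS)(1-4z)^{(3\chi(\mS)-2)/2}$, with a unique algebraic singularity at $z=1/4$ of exponent $\alpha=1-3\chi(\mS)/2>0$ (recall $\chi(\mS)\le 0$ since $\mS$ is not the disc). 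The standard transfer theorem of singularity analysis \cite{FlajoletSedgewig:analytic-combinatorics} then yields
\[
|\mA_\mS(n)| \;\sim\; \frac{a(\mS)}{4\,\Gamma(1-3\chi(\mS)/2)}\,n^{-3\chi(\mS)/2}\,4^n,
\]
the factor $1/4$ coming from the shift $4^{n-1}=4^n/4$ in the extraction and the subleading terms in the expansion contributing $O(n^{-1})$.

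The remaining and most delicate step is the comparison between $M_\mS(n)$ and $|\mA_\mS(n)|$. Consider a map $A\in\mA_\mS$ with an ``empty'' face $F$ carrying no leaf: using $\Phi$, this forces, for every edge $e$ of the cubic scheme bordering $F$, the doubly-rooted tree $\tau_e^\bu$ to have all its non-root non-marked leaves on the side of $e$ opposite to $F$. The generating function of such ``one-sided'' doubly-rooted trees equals $\sum_{k\ge 0}\gT(z)^k = \gT(z)/z = 2/(1+\sqrt{1-4z})$, whose dominant singular contribution at $z=1/4$ is of order $(1-4z)^{1/2}$ rather than $(1-4z)^{-1/2}$, so replacing any two-sided tree factor by a one-sided one reduces the asymptotic order by a factor $n^{-1/2}$. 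Inclusion--exclusion over subsets of empty faces then bounds the contribution of bad maps by $O(n^{-3\chi(\mS)/2-1/2}\,4^n)$, producing the announced error $(1+O(n^{-1/2}))$. The main technical difficulty is to perform this singularity analysis uniformly across all schemes, tracking the singular expansion of the mixed products $\gT'(z)^{2-3\chi(\mS)-k}(\gT(z)/z)^{k}$ that appear in the inclusion--exclusion (with some care needed when an edge borders $F$ on both sides, which forces the corresponding factor to degenerate to the constant $1$).
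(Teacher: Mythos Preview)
Your proposal is correct and follows the same overall strategy as the paper: pass to the dual class $\mA_\mS$, use Lemma~\ref{lem:decomposition-cubic} to get the closed form $a(\mS)\,z\,(1-4z)^{3\chi(\mS)/2-1}$, extract coefficients, and then bound the maps in $\mA_\mS(n)\setminus\mM_\mS(n)$.

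The only genuine difference is in the comparison step, where the paper's argument is noticeably lighter than yours. You fix an empty face $F$ and observe that \emph{every} tree on an edge bordering $F$ is one-sided (toward the opposite side), which forces you to track the face structure of each cubic scheme, distinguish edges bordering $F$ on one side from those bordering it on both sides, and analyse the mixed products $\gT'(z)^{2-3\chi(\mS)-k}(\gT(z)/z)^k$. The paper instead notes the weaker fact that the existence of \emph{some} empty face forces \emph{some} tree $\tau_i^\bu$ to be one-sided, and then simply union-bounds over the index $i$:
\[
\gAo_\mS(z)\;\le\;(2-3\chi(\mS))\,a(\mS)\,z\,\gTo(z)\,\gT'(z)^{1-3\chi(\mS)},
\]
with $\gTo(z)=2\gT(z)/z-1$. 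This avoids any reference to the combinatorics of faces in the schemes and gives the $O(n^{-1/2})$ directly. Two minor remarks on your write-up: (i) you only need the first term of inclusion--exclusion (a union bound), not the full alternating sum; (ii) there is no uniformity issue ``across all schemes'' since there are only $a(\mS)$ of them --- a finite sum of elementary singular expansions suffices. Also, the phrase ``dominant singular contribution of order $(1-4z)^{1/2}$'' is slightly imprecise: what matters for the product is that $\gT(z)/z$ is $O(1)$ at $z=1/4$ (its value there is $2$), which is what drops the exponent of $\gT'(z)$ by one.
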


A way of determining the constant $a(\mS)$ is given in the Appendix~\ref{section:appendix-constants}. The rest of this section is devoted to the proof of Theorem~\ref{thm:nb-triangular}.\\

Lemma~\ref{lem:decomposition-cubic} immediately translates into an equation relating the generating function
 $\gA_\mS(z)\equiv\sum_{n\geq 0} |\mA_\mS(n)|z^n$ and the generating function $\gT'(z)$ of doubly-rooted binary trees:
$$ 
\gA_\mS(z)~=~ a(\mS)z\left(\gT'(z)\right)^{2-3\chi(\mS)},
$$ 
where $a(\mS)$ is the number of cubic schemes of the surface $\mS$.
Since the cubic schemes of $\mS$ are trivially in bijection with
rooted $\{3\}$-valent maps on $\mSb$ having $\be(\mS)$ faces, the
constant $a(\mS)$ counts these maps. Moreover, since
$\gT(z)=\sum_{n\geq 0}C(n)z^{n+1}=(1-\sqrt{1-4z})/2$ one gets
$\gT'(z)=(1-4z)^{-1/2}$ and
$$\gA_\mS(z)= a(\mS)z(1-4z)^{3\chi(\mS)/2-1}.$$
From this expression, one can obtain an exact formula (depending on the parity of $\chi(\mS)$) for the cardinal $|\mA_\mS(n)|=[z^n]\gA_\mS(z)$. However, we shall content ourselves with the following asymptotic result:
\begin{equation}\label{eq:asymptotic-A}
|\mA_\mS(n)|\equiv[z^n]\gA_\mS(z)~=_{n\to \infty}~\frac{a(\mS)}{4\Gamma\left(1-3\chi(\mS)/2\right)}~n^{-3\chi(\mS)/2}\, 4^n~ \left(1+O\left(n^{-1}\right)\right).
\end{equation}

It only remains to bound the number of maps in $\mA_\mS(n)$ which are not the dual of maps in $\mM_\mS(n)$ to prove the following lemma.
\begin{lemma}\label{lem:comparison-AandM}
The number $M_\mS(n)$  of boundary-rooted  triangular maps on $\mS$ having $n$ vertices satisfies:
$$M_\mS(n)=A_\mS(n)\left(1+O(n^{-1/2})\right).$$
\end{lemma}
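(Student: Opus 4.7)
The plan is to apply the duality of Section~\ref{section:definitions} in order to reduce the statement to a bound on the number of \emph{defective} maps. Recall that duality identifies $\mM_\mS(n)$ with the subset $\widetilde{\mA}_\mS(n) \subseteq \mA_\mS(n)$ of maps all of whose faces are incident to at least one leaf; hence writing $B_\mS(n) := A_\mS(n) - M_\mS(n)$ for the number of defective maps in $\mA_\mS(n)$ (those having at least one leafless face), it will suffice to prove $B_\mS(n) = O(n^{-1/2})\, A_\mS(n)$. The case $\mS = \DD$ is immediate: $\mA_\DD$ consists of plane trees whose unique face is incident to every leaf, so $B_\DD \equiv 0$. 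I therefore assume $\mS \ne \DD$, so that $\chi(\mS) \leq 0$ and cubic schemes have at least $2-3\chi(\mS) \geq 2$ edges.

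To estimate $B_\mS(n)$ I would use a union bound based on the bijection $\Phi$ of Lemma~\ref{lem:decomposition-cubic}. For a cubic scheme $S$ of $\mS$ and a face $f$ of $S$, let $r_1 = r_1(S,f)$ (resp.\ $r_2 = r_2(S,f)$) be the number of edges of $S$ incident to $f$ on exactly one side (resp.\ on both sides). Since each scheme has at least one edge and its $1$-skeleton is connected, $r_1 + r_2 \geq 1$ for every such pair. Under $\Phi$, the face $f$ of a map $A \in \mA_\mS$ is leafless precisely when, for every edge $e$ of $S$ bordering $f$, the doubly-rooted tree $\tau_e^\bu$ has no branches on the $f$-side (in particular $\tau_e^\bu$ must be the single-edge tree whenever $e$ borders $f$ on both sides). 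Decomposing a doubly-rooted binary tree along its unique root-to-marked path---of length $m \geq 1$, whose $m-1$ internal vertices each carry one branch lying, by planarity, either to the left or to the right of the path---recovers the identity
$$\gT'(z) \;=\; \sum_{m \geq 1}\bigl(2\gT(z)\bigr)^{m-1} \;=\; \frac{1}{1 - 2\gT(z)} \;=\; (1-4z)^{-1/2},$$
while dropping the factor $2$ per internal vertex (to force all branches onto the non-$f$ side) yields the generating function of \emph{one-sided} doubly-rooted trees,
$$\gH(z) \;:=\; \sum_{m \geq 1}\gT(z)^{m-1} \;=\; \frac{1}{1 - \gT(z)} \;=\; \frac{2}{1 + \sqrt{1-4z}}.$$
Crucially, $\gH$ is bounded at the dominant singularity $z=1/4$ (with $\gH(1/4) = 2$), whereas $\gT'$ blows up there like $(1-4z)^{-1/2}$.

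A union bound over the finitely many pairs $(S,f)$ then gives
$$\sum_n B_\mS(n)\, z^n \;\leq\; \sum_{S,\, f \text{ face of } S}\, z\cdot \gT'(z)^{2 - 3\chi(\mS) - r_1 - r_2}\cdot \gH(z)^{r_1},$$
where each summand has at $z = 1/4$ a singularity at most of order $(1-4z)^{-(2 - 3\chi(\mS) - r_1 - r_2)/2}$. Standard transfer theorems~\cite{FlajoletSedgewig:analytic-combinatorics} then give $n$-th coefficients of order $O\bigl(n^{-3\chi(\mS)/2 - (r_1 + r_2)/2}\, 4^n\bigr)$, which by $r_1 + r_2 \geq 1$ is $O\bigl(n^{-3\chi(\mS)/2 - 1/2}\, 4^n\bigr)$. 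Comparing with~\Ref{eq:asymptotic-A} will deliver $B_\mS(n) = O(n^{-1/2})\, A_\mS(n)$, completing the plan. The main technical obstacle will be the correct identification of $\gH(z)$ as the generating function of one-sided doubly-rooted trees: separating the planar left/right attachment of each branch along the root-to-marked path is what distinguishes $\gH$ (bounded at $z=1/4$) from the full $\gT'$ (singular), thereby saving a factor of $(1-4z)^{1/2}$ per constrained edge and providing the desired error term.
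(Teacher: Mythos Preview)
Your proof is correct and follows essentially the same strategy as the paper: use the bijection $\Phi$ to see that a leafless face forces some doubly-rooted tree in the decomposition to be one-sided, and exploit the fact that the generating function of one-sided doubly-rooted trees is bounded at $z=1/4$ while $\gT'(z)\sim (1-4z)^{-1/2}$ diverges there, thereby saving a factor of $\sqrt{n}$.

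The only organisational difference is in how the union bound is set up. The paper unions over \emph{edges} of the scheme (i.e.\ over which of the $2-3\chi(\mS)$ trees is one-sided), yielding directly
\[
\gAo_\mS(z)\;\leq\;(2-3\chi(\mS))\,a(\mS)\,z\,\gTo(z)\,\gT'(z)^{1-3\chi(\mS)},
\]
with $\gTo(z)=2\gT(z)/z-1$ the series for trees with \emph{some} empty side. You instead union over \emph{faces} $(S,f)$ and, for each, constrain all $r_1+r_2\geq 1$ trees bordering $f$ simultaneously, using $\gH(z)=1/(1-\gT(z))$ for trees with a \emph{prescribed} empty side. The two one-sided series are related by $\gTo=2\gH-1$ and both are analytic at $z=1/4$, which is all that matters. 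Your version extracts a little more (it saves $(1-4z)^{(r_1+r_2)/2}$ rather than $(1-4z)^{1/2}$), but this extra precision is not needed for the $O(n^{-1/2})$ bound, and the paper's edge-based union bound is marginally quicker to write down.
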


A doubly-rooted tree is said \emph{one-sided} if there are no-leaf on one of the sides of the path going from the root-vertex to the marked leaf; it is said \emph{two-sided} otherwise.
 
\begin{proof} Let $\mAo_\mS$ be the class of maps in $\mA_\mS$ which are not
the dual of maps in $\mM_\mS$ and let
$\gAo_\mS(z)=\gA_\mS(z)-\gM_\mS(z)$ be the corresponding generating
function. Let $A$ be a map in $\mA_\mS$ which is not the dual of a
triangular map in $\mM_\mS$. Then $A$ has a face incident to no leaf
and its image $(S,(\tau_1^\bu,\ldots,\tau_{2-3\chi(\mS)}^\bu))$ by
the bijection $\Phi$ is such that one of the doubly-rooted trees
$\tau_1^\bu,\ldots,\tau_{2-3\chi(\mS)}^\bu$ is one-sided. Thus,
$$\gAo_\mS(z)\leq \left(2-3\chi(\mS)\right)\,a(\mS) z \gTo(z) (\gT'(z))^{1-3\chi(\mS)},$$
where  $\gTo(z)$ is the generating function of one-sided
doubly-rooted binary trees (counted by number of non-root,
non-marked leaves). The number of one-sided doubly-rooted binary
trees having $n$ leaves which are neither marked nor the root-vertex
is $2~T(n+1)$ if $n>0$ and 1 for $n=0$, hence $\gTo(z)=2\gT(z)/z-1$.
The coefficients of the series $z\gTo(z) (\gT'(z))^{1-3\chi(\mS)}$
can be determined explicitly and gives
$$\displaystyle [z^n]\gAo_\mS(z)=O\left([z^n]z\gTo(z)(\gT'(z))^{1-3\chi(\mS)}\right)=O\left(n^{-3\chi(\mS)/2-1/2}4^n\right)=O\left(\frac{[z^n]\gA_\mS(z)}{\sqrt{n}}\right).$$
This completes the proof of  Lemma~\ref{lem:comparison-AandM}. 
\end{proof}

Lemma~\ref{lem:comparison-AandM} together with Equation~\Ref{eq:asymptotic-A} complete the proof of Theorem~\ref{thm:nb-triangular}.
\findem


\section{Enumeration of $\De$-angular maps}\label{section:D-angular}
We  now extend the results of the previous section to $\De$-angular
maps, where $\De$ is any subset of $\NN^{\geq 3}$. We first deal
with the case of the disc $\DD$. By duality, the problem corresponds
to counting leaf-rooted $\Dee$-valent trees by number of leaves.
This is partially done in \cite[Example VII.13]{FlajoletSedgewig:analytic-combinatorics} and we follow the
method developed there. Then, we count $\De$-angular maps on
arbitrary surfaces by exploiting an extension of the bijection $\Phi$.

\subsection{Counting trees by number of leaves}
In this subsection, we enumerate the $\Dee$-valent plane trees by number of leaves. 

\begin{prop}\label{prop:nb-Dary-trees-general}
Let $\De$ be any subset of $\NN^{\geq 3}$ and let $p$ be the greatest common divisor of $\{\de-2,~\de\in\De\}$.
Then, the number of non-root leaves of $\Dee$-valent trees are congruent to 1 modulo $p$. Moreover, the asymptotic number of leaf-rooted $\Dee$-valent trees having $np+1$ non-root leaves is
$$T_\De(np+1)=_{n\to \infty}\frac{p\ga_\De}{2\sqrt{\pi}}~ (np+1)^{-3/2}\, {\rho_\De}^{-(np+1)}\,\left(1+O\left(n^{-1}\right)\right),$$
where $\tau_\De$, $\rho_\De$ and $\ga_\De$ are the unique positive constants satisfying:
\begin{equation}\label{eq:constants-trees}
\displaystyle \sum_{\de\in\De}(\de-1) {\tau_\De}^{\de-2}=1, ~~\rho_\De=\tau_\De-\sum_{\de\in\De}{\tau_\De}^{\de-1} ~\textrm{ and }~ \displaystyle \ga_\De=\sqrt{\frac{2\rho_\De}{\sum_{\de\in\De}(\de-1)(\de-2) {\tau_\De}^{\de-3}}}.
\end{equation}
\end{prop}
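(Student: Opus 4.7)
The plan is to convert the statement into a functional equation for $\gT_\De(z)$ and then apply standard singularity analysis, paying attention to the periodicity when $p>1$.

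First I would derive a functional equation by decomposing a leaf-rooted $\Dee$-valent tree at the neighbour $v$ of the root-leaf. Either $v$ is itself a leaf, in which case the tree is a single edge with exactly one non-root leaf (contributing $z$), or $v$ has degree $\de\in\De$, in which case removing $v$ detaches $\de-1$ ordered subtrees, each of which becomes a leaf-rooted $\Dee$-valent tree once the severed edge is regarded as its new root-leaf. Summing over the two cases yields
$$\gT_\De(z) = z + \sum_{\de\in\De}\gT_\De(z)^{\de-1}.$$
For the congruence, in any $\Dee$-valent tree with $L$ leaves and $v_\de$ vertices of degree $\de$, combining $E=V-1$ with the degree-sum relation $L+\sum_{\de\in\De}\de\, v_\de=2E$ yields $L-2=\sum_{\de\in\De}(\de-2)v_\de$. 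Since $p$ divides each $\de-2$, it divides $L-2$, and therefore the number $L-1$ of non-root leaves is congruent to $1$ modulo $p$.

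Next I would analyse the functional equation by viewing $z$ as an analytic function of $y=\gT_\De(z)$, namely $z=h(y):=y-\sum_{\de\in\De}y^{\de-1}$. The equation $h'(\tau_\De)=0$ is precisely the first relation in \Ref{eq:constants-trees}, and the corresponding critical value $\rho_\De=h(\tau_\De)$ is the second. Expanding $h$ to second order at $\tau_\De$ gives
$$\rho_\De-z = \frac{1}{2}\pare{\sum_{\de\in\De}(\de-1)(\de-2)\tau_\De^{\de-3}}(y-\tau_\De)^2 + O\!\pare{(y-\tau_\De)^3},$$
and local inversion produces a square-root singularity of the form $\gT_\De(z)=\tau_\De-\ga_\De\sqrt{1-z/\rho_\De}+O(1-z/\rho_\De)$, with $\ga_\De$ as in \Ref{eq:constants-trees}.

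Finally I would convert this local expansion into the stated asymptotic via the transfer theorems of \cite{Flajol:Sing1,FlajoletSedgewig:analytic-combinatorics}. Because of the congruence $n\equiv 1\pmod p$, it is cleanest to work with the aperiodic series $\gS(w):=\gT_\De(w^{1/p})/w^{1/p}$, whose coefficients are precisely the $T_\De(np+1)$ and which inherits a single dominant square-root singularity at $w=\rho_\De^p$ from that of $\gT_\De$, with an additional factor $1/\sqrt{p}$ coming from the change of variable. Applying the transfer theorem to $\gS$ and rewriting the result in terms of $(np+1)^{-3/2}$ and $\rho_\De^{-(np+1)}$ gives the claimed formula, the factor $p$ in the statement arising precisely from the conversion between the two indexations. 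The main obstacle, and the only point that is not entirely mechanical, is justifying that the $p$ points $\rho_\De e^{2i\pi k/p}$ are the only singularities of $\gT_\De$ on its circle of convergence and that $\gS$ admits analytic continuation to a suitable slit domain beyond $\rho_\De^p$; both follow from the standard smooth implicit-function scheme (the aperiodicity of $\gS$ and the positivity and non-linearity of $\phi(y)=\sum_{\de\in\De}y^{\de-1}$ being immediate from the definition of $p$ and the hypothesis $\De\subseteq\NN^{\geq 3}$).
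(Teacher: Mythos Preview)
Your proposal is correct and follows essentially the same route as the paper: the same root-decomposition functional equation $\gT_\De(z)=z+\sum_{\de\in\De}\gT_\De(z)^{\de-1}$, the same change of variable $\gS(w)=\gT_\De(w^{1/p})/w^{1/p}$ (which is exactly the paper's series $\gY_\De$), and the same appeal to the smooth implicit-function schema followed by transfer. The only cosmetic difference is that you phrase the local analysis as inverting $z=h(y)$ near its critical point, whereas the paper packages the same computation as verifying the characteristic system $G(r,s)=s$, $G_w(r,s)=1$ for an explicit $G$; the resulting constants $\tau_\De,\rho_\De,\ga_\De$ and the extra $1/\sqrt{p}$ factor match.
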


\noindent \textbf{Remarks on the constants $\tau_\De,~\rho_\De$ and $\ga_\De$:}
\ite The positive constant $\tau_\De$ satisfying
\Ref{eq:constants-trees} clearly exists, is unique, and is less than 1. Indeed, the
function $f:\tau \mapsto\sum_{\de\in\De}(\de-1) \tau^{\de-2}$ is well-defined and 
strictly increasing on $[0,1[$ and $f(0)=1$ while $\lim_{\tau \to 1} f(\tau)>1$.
Moreover,
$\rho_\De=\tau_\De\left(1-\sum_{\de\in\De}{\tau_\De}^{\de-2}\right)$
is clearly positive. Hence, $\tau_\De,~\rho_\De$ and $\ga_\De$
are well defined.
\ite An important case is when $\De$ is made of a single element $\de=p+2$.
In this case, one gets $\tau_{\De}=(p+1)^{-1/p}$, $\rho_{\De}=\left(\frac{p^p}{(p+1)^{p+1}}\right)^{1/p}$, and
 $\ga_{\De}=\sqrt{2(p+1)^{-(p+2)/p}}$. \\ 

We first introduce a change of variable in order to deal with the periodicity of the number of leaves.

\begin{lemma}\label{lem:periodicity}
Let $\De\subseteq \NN^{\geq 3}$, let $p=\gcd(\de-2,~\de\in\De)$ and let $\gT_\De(z)$ be the generating function of $\Dee$-valent
leaf-rooted trees. There exists a unique power series
$\gY_\De(t)$ in $t$ such that
$$\gT_\De(z)=z\gY_\De(z^p).$$
Moreover, the series $\gY_\De$ satisfies
\begin{equation}\label{eq:decomposition-tree-periodic}
\gY_\De(t)=1+\sum_{k\in K}t^{k}\gY_\De(t)^{kp+1}
\end{equation}
where $K$ is the subset of $\NN^{\geq 1}$ defined by $\De=\{2+kp,~k\in K\}$.
\end{lemma}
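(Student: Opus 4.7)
The plan is to first derive a functional equation satisfied by $\gT_\De(z)$ via the standard root-decomposition of a leaf-rooted $\Dee$-valent tree at the non-root endpoint $v$ of its root-edge. Either $v$ is itself a leaf (so the tree is a single edge, contributing $z$ because it carries one non-root leaf), or $v$ has degree $\de\in\De$ and supports $\de-1$ planted subtrees in cyclic plane order around $v$. Each planted subtree bijects with a leaf-rooted $\Dee$-valent tree by reattaching a phantom root-leaf to its apex vertex, and this bijection preserves the number of non-root leaves. Summing over the possibilities yields
\begin{equation*}
\gT_\De(z)~=~z~+~\sum_{\de\in\De}\gT_\De(z)^{\de-1}.
\end{equation*}

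Next I would establish that every monomial $z^n$ occurring in $\gT_\De(z)$ has exponent $n\equiv 1\pmod p$. The natural way is to solve the functional equation by the usual iteration starting from $0$; the sequence of iterates converges to $\gT_\De(z)$ in the topology of formal power series since each iterate's valuation increases strictly. The congruence property is preserved along the iteration: if some iterate $F$ has only monomials with exponent congruent to $1$ modulo $p$, then any monomial of $F^{\de-1}$ has exponent equal to a sum of $\de-1$ such numbers, hence congruent to $\de-1\equiv 1 \pmod{p}$ (this is precisely where $p\mid \de-2$ is used). Consequently, $\gT_\De(z)/z$ is a formal power series in $z^p$ with constant term $1$, so there is a unique power series $\gY_\De(t)$ with $\gT_\De(z)=z\,\gY_\De(z^p)$, giving the first assertion of the lemma.

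The final step is to substitute $\gT_\De(z)=z\,\gY_\De(z^p)$ into the functional equation for $\gT_\De$, divide by $z$, and re-index the sum over $\De$ using the bijection $\de\leftrightarrow k=(\de-2)/p$ from $\De$ onto $K$. This produces $\gY_\De(z^p)=1+\sum_{k\in K}(z^p)^k\,\gY_\De(z^p)^{kp+1}$, and setting $t=z^p$ gives the claimed equation. The only step requiring any real care is the modular-arithmetic argument in the middle paragraph; everything else is bookkeeping once the root-decomposition has been set up correctly.
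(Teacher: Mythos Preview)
Your proof is correct and follows essentially the same route as the paper's: the root-decomposition yielding $\gT_\De(z)=z+\sum_{\de\in\De}\gT_\De(z)^{\de-1}$, a congruence argument for the exponents modulo $p$ (the paper says ``induction on the number of leaves'', you phrase it as stability under iteration of the functional equation---the same idea), and then the substitution $\gT_\De(z)=z\gY_\De(z^p)$ followed by $t=z^p$. One small wording slip worth fixing: it is not the valuation of the iterates that increases strictly, but the valuation of the successive differences $F_{n+1}-F_n$, which is what ensures convergence in the $z$-adic topology.
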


\begin{proof} The fact that the number of non-root leaves is congruent to 1
modulo $p$ is easily shown by induction on the number of leaves.  Hence a 
power series $\gY_\De(t)$ such that $\gT_\De(z)=z\gY_\De(z^p)$ exists and is
unique with this property. We now use the classical \emph{decomposition of trees at the root} (which corresponds to splitting the vertex adjacent to the root-leaf) represented in Figure~\ref{fig:decomposition-tree}. This decomposition gives 
\begin{equation}\label{eq:decomposition-tree}
\gT_\De(z)=z+\sum_{\de\in \De}\gT_\De(z)^{\de-1},
\end{equation}
and one obtains~\Ref{eq:decomposition-tree-periodic} by substituting $\gT_\De(z)$ by $z\gY_\De(z^p)$ in~\Ref{eq:decomposition-tree} and then substituting $z^p$ by $t$.
\end{proof}

\begin{figure}[ht!]\begin{center} \input{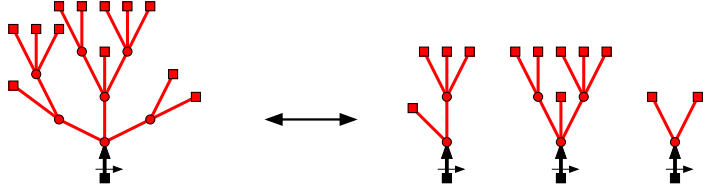}\caption{Decomposition of trees at the root}\label{fig:decomposition-tree} \end{center}
\end{figure}

We now analyse the singularity of the generating function
$\gY_\De(t)$ and deduce from it the asymptotic behaviour of its coefficients.
We call \emph{domain dented} at a value $R>0$, a domain of the
complex plane $\CC$ of the form $\{z\in \CC: |z|<R' \textrm{ and }
\arg(z-R)\notin [-\theta,\theta]\}$ for some real number $R'>R$ and
some positive angle $0<\theta<\pi/2$. A dented domain is represented
in Figure~\ref{fig:dented-domain}.

\begin{lemma}\label{lem:singularities-trees-general}
Let $\De\subseteq \NN^{\geq 3}$ and let $p$, $\tau_\De,~\rho_\De$ and $\ga_\De$ be as defined in Proposition~\ref{prop:nb-Dary-trees-general}. The generating function $\gY_\De(t)$ of leaf-rooted $\Dee$-valent trees (defined in Lemma~\ref{lem:periodicity})  is analytic in a domain dented at  $t={\rho_\De}^p$. Moreover, at any order $n\geq 0$, an expansion of the form 
$$\gY_\De(t)~=_{t \to {\rho_\De}^p}~\sum_{k=0}^n \al_k\left(1-\frac{t }{{\rho_\De}^p}\right)^{k/2}+o\left(\left(1-\frac{t}{{\rho_\De}^p}\right)^{n/2}\right)$$ 
is valid in this domain, with $\displaystyle \alpha_0=\frac{\tau_\De}{\rho_\De}$ and $\displaystyle \alpha_1=-\frac{\ga_\De}{\rho_\De\sqrt{p}}$.
\end{lemma}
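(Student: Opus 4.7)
The plan is to apply the classical smooth inverse function schema of Flajolet--Sedgewick to the functional equation~(\ref{eq:decomposition-tree}) and then transfer the resulting singular expansion through the substitution $t=z^p$.

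\textbf{Step 1: Inversion setup.} Equation~(\ref{eq:decomposition-tree}) can be rewritten as $z=\phi(\gT_\De(z))$, where
$$\phi(u):=u-\sum_{\de\in\De}u^{\de-1}$$
is entire. Since $\phi(0)=0$ and $\phi'(0)=1$, the analytic inverse $\gT_\De(z)$ is well defined near the origin. The dominant positive singularity of $\gT_\De$ arises where $\phi'$ vanishes: $\phi'(\tau)=0$ reads $\sum_{\de\in\De}(\de-1)\tau^{\de-2}=1$, whose unique positive solution in $(0,1)$ is $\tau_\De$, and then $\rho_\De=\phi(\tau_\De)=\tau_\De-\sum_{\de\in\De}\tau_\De^{\de-1}>0$.

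\textbf{Step 2: Local inversion with square-root branch.} A Taylor expansion of $\phi$ at $\tau_\De$ yields $\phi(u)=\rho_\De+\tfrac{1}{2}\phi''(\tau_\De)(u-\tau_\De)^2+O((u-\tau_\De)^3)$ with $\phi''(\tau_\De)=-\sum_{\de\in\De}(\de-1)(\de-2)\tau_\De^{\de-3}<0$. Setting $w=\sqrt{1-z/\rho_\De}$ and applying the holomorphic implicit function theorem to the equation $\phi(u)-z=0$ in the variable $w$ gives, in a domain of $z$ dented at $\rho_\De$, an expansion to arbitrary order of the form
$$\gT_\De(z)=\tau_\De-\ga_\De\sqrt{1-z/\rho_\De}+\sum_{k\geq 2}\beta_k\bigl(1-z/\rho_\De\bigr)^{k/2},$$
where $\ga_\De=\sqrt{2\rho_\De/|\phi''(\tau_\De)|}$ matches the definition in~(\ref{eq:constants-trees}). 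This is exactly the smooth inverse schema described in~\cite{FlajoletSedgewig:analytic-combinatorics}.

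\textbf{Step 3: Change of variable to $\gY_\De$.} Since $\gT_\De(z)=z\gY_\De(z^p)$ and coefficients of $\gT_\De$ are supported on $\{1,1+p,1+2p,\dots\}$ (by the first part of Lemma~\ref{lem:periodicity}), the function $\gT_\De$ has $p$ conjugate dominant singularities at $\rho_\De\zeta$, $\zeta^p=1$; all of them collapse to the single point $t=\rho_\De^p$ after the substitution $t=z^p$. The principal branch $z=t^{1/p}$ is a biholomorphism between a dented domain at $t=\rho_\De^p$ and a dented domain at $z=\rho_\De$, so $\gY_\De(t)=\gT_\De(z)/z$ is analytic in the former. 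Expanding
$$1-z/\rho_\De=1-(t/\rho_\De^p)^{1/p}=\frac{1}{p}\bigl(1-t/\rho_\De^p\bigr)+O\bigl((1-t/\rho_\De^p)^2\bigr)$$
and $1/z=1/\rho_\De+O(1-t/\rho_\De^p)$, and multiplying with the expansion of Step~2, produces a Puiseux expansion of $\gY_\De(t)$ in $(1-t/\rho_\De^p)^{1/2}$ to any desired order. The leading coefficients read $\alpha_0=\tau_\De/\rho_\De$ and $\alpha_1=-\ga_\De/(\rho_\De\sqrt{p})$, as claimed.

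\textbf{Main obstacle.} The only non-routine point is justifying that the local square-root inversion around $\tau_\De$ extends to a genuine dented domain at $\rho_\De$ rather than merely a slit disc; this requires checking that $\gT_\De$ has no other singularity of modulus $\leq\rho_\De$ besides the $p$ conjugates $\rho_\De\zeta$, which follows from the Daffodil/Pringsheim argument (the series $\gT_\De(z)/z$ viewed in $t=z^p$ has positive coefficients and a unique positive singularity at $t=\rho_\De^p$). Once that is in place, the inverse function theorem applied in the new variable $w$ provides analyticity in any sector of opening $<\pi$ cut from the real segment $[\rho_\De^p,\infty)$, which is exactly a dented domain.
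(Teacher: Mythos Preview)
Your argument is correct and reaches the same conclusion, but it follows a genuinely different path from the paper. The paper works directly with $\gY_\De(t)$: it sets $\gW(t)=\gY_\De(t)-1$ and applies the Meir--Moon smooth \emph{implicit}-function schema to the equation $\gW=G(t,\gW)$ with $G(t,w)=\sum_{k\in K}t^k(w+1)^{kp+1}$ coming from~\eqref{eq:decomposition-tree-periodic}, checking aperiodicity of $\gW$ and verifying that the characteristic system is solved by $(r,s)=(\rho_\De^p,\tau_\De/\rho_\De-1)$. You instead apply the smooth \emph{inverse}-function schema to $z=\phi(\gT_\De(z))$ in the original variable $z$, obtain the square-root expansion of $\gT_\De$ at $z=\rho_\De$, and only then push everything through the substitution $t=z^p$. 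The paper's route is slightly cleaner in that no change of variable is needed after the singular analysis and aperiodicity is handled once, directly for $\gY_\De$; your route has the advantage of using the simpler univariate inversion lemma rather than a bivariate implicit schema, at the cost of the extra bookkeeping in Step~3 (collapsing the $p$ conjugate singularities of $\gT_\De$ and re-expanding $1-z/\rho_\De$ and $1/z$ as Puiseux series in $1-t/\rho_\De^p$).

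One small inaccuracy: $\phi$ is not entire when $\De$ is infinite---the power series $\sum_{\de\in\De}u^{\de-1}$ then has radius of convergence~$1$. This does not affect your argument, since $\tau_\De<1$ and all you need is analyticity of $\phi$ in a neighbourhood of $[0,\tau_\De]$, but the word ``entire'' should be replaced by ``analytic on $|u|<1$''.
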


The proof of Lemma~\ref{lem:singularities-trees-general} uses a theorem Meir and Moon \cite{Meier:asymptotic-method} about generating function defined by a \emph{smooth implicit-function schema}. This proof is given in Appendix~\ref{appendix:asymptotic-method}.\\

Lemma~\ref{lem:singularities-trees-general} ensures that the generating function $\gY_\De(t)$ satisfies the required condition in order to apply the classical \emph{transfer theorem} between singularity types and coefficient asymptotics (see \cite{FlajoletSedgewig:analytic-combinatorics} for details). Applying this transfer theorem gives Proposition~\ref{prop:nb-Dary-trees-general}.\findem


\subsection{Counting $\De$-angular maps on general surfaces}
We consider a surface $\mS$ with boundary distinct from the disc and denote by  $\mA_\mS^\De$ the set of leaf-rooted $\Dee$-valent maps on $\mSb$. Recall that by duality the maps in $\mM^\De_\mS$ (that is, boundary-rooted $\De$-angular on the surface $\mSb$) are in bijection with the maps in $\mA_\mS^\De$ such that every face is incident with at least one leaf.\\

We first extend the bijection $\Phi$ defined in Section~\ref{section:triangular} to maps in $\mA_\mS^\De$. This decomposition leads to consider leaf-rooted trees with \emph{legs}, that is, marked vertices at distance~2 from the root-leaf.  We call \emph{scheme} of the surface $\mS$ a leaf-rooted map on $\mSb$ having $\be(\mS)$ faces and such that the degree of any non-root vertex is at least $3$. Recall that a scheme is said \emph{cubic} if the degree of any non-root vertex is 3. By combining Euler relation with the relation of incidence between vertices and edges, it is easy to see that the number of edges of a scheme of $\mS$ is at most $e=2-3\chi(\mS)$, with equality if and only if the scheme is cubic. In particular, this implies that the number of scheme is finite. \\


Let $A$ be a map in $\mA_\mS^\De$.  One obtains a scheme $S$ of $\mS$ by recursively deleting the non-root
vertices of degree 1 and then contracting the vertices of degree 2;
see Figure~\ref{fig:decomposition-cubic}. The vertices of the scheme
$S$  can be identified with some  vertices of $A$. Splitting these
vertices gives a sequence of doubly-rooted $\De$-valent trees (each
of them associated with an edge of $S$), and a sequence of
leaf-rooted $\De$-valent trees with legs (each of them associated
with a non-root vertex of $S$); see Figure~\ref{fig:extension-Phi}.
More precisely, if the scheme $S$ has  $k$ edges and $l$ non-root
vertices having degree $d_1,\ldots,d_l$, then the map $A$ is
obtained by substituting each edge of $S$ by a doubly-rooted
$\Dee$-valent tree  and each vertex of $S$ of degree $d$ by a
leaf-rooted $\Dee$-valent trees with $d-1$ legs. (These
substitution can be made unambiguously provided that one chooses a
canonical end and side for each each edge of $S$ and a canonical
incident end for each vertex of $S$.) We define the mapping $\Phi$
on $\mA^\De_\mS$ by setting
$\Phi(A)=(S,(\tau_1^\bu,\ldots,\tau_e^\bu),(\tau_1,\ldots,\tau_v))$,
where $S$ is the scheme of $A$ having $e$ edges and $v$ non-root
vertices, $\tau_i^\bu$ is the doubly-rooted tree associated with the
$i$th edge of $S$ and $\tau_j$ is the tree with legs associated with
the $j$th vertex of~$S$. Reciprocally any triple
$(S,(\tau_1^\bu,\ldots,\tau_e^\bu),(\tau_1,\ldots,\tau_v))$ defines
a map in $\mA_\mS^\De$, hence the following lemma.

\begin{lemma}\label{lem:decomposition-D}
Let $S$ be a scheme of $~\mS$ with $e$ edges and $v$ non-root
vertices having degree $d_1,\ldots,d_v$. Then the  mapping $\Phi$
gives a bijection between
\begin{itemize}
\item the set map in $\mA^\De_\mS$ having scheme $S$,
\item pairs made of a sequence of $e$ doubly-rooted $\Dee$-valent trees and a sequence of  $v$  leaf-rooted $\Dee$-valent trees  with $d_1\!-\!1,\ldots,d_v\!-\!1$ legs respectively.
\end{itemize}
Moreover, the number of non-root leaves of a map $A\in \mA_\mS^\De$ is equal to the total number of leaves which are neither legs nor marked-leaves nor root-leaves in the associated sequences of trees.
\end{lemma}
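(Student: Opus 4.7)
The plan is to extend the proof of Lemma~\ref{lem:decomposition-cubic} to the $\De$-valent setting. The essential novelty is that non-root vertices of the scheme $S$ may now have degree strictly larger than $3$, and hence carry a non-trivial local piece of $A$ which has to be recorded as a leaf-rooted $\Dee$-valent tree with legs. My first step is to check that the scheme $S$ of $A\in\mA^\De_\mS$ is well-defined, independently of the order in which non-root leaves are deleted and degree-$2$ vertices are contracted. This is a routine confluence argument: both operations are local and commute on disjoint supports, and the process terminates as soon as every non-root vertex has degree $\geq 3$, which is precisely the defining property of a scheme.

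Next, I fix once and for all, for each scheme $S$ of $\mS$, a labelling of its edges together with a canonical end and side for each edge and a canonical incident half-edge at each non-root vertex. For $A\in\mA^\De_\mS$ with scheme $S$, each non-root vertex $v$ of $S$ is naturally identified with a unique vertex $\tilde v$ of $A$ of the same $S$-degree $d_v$, tracked through the contractions and deletions. Cutting $A$ at the vertices $\tilde v$ yields, on the one hand, for each non-root vertex $v$ of $S$, a leaf-rooted $\Dee$-valent plane tree $\tau_v$ whose center corresponds to $\tilde v$, with the root-leaf placed on the canonical half-edge at $v$ and the other $d_v-1$ incident half-edges marked as legs; and on the other hand, for each edge $e$ of $S$, a doubly-rooted $\Dee$-valent plane tree $\tau_e^\bu$ whose spine is the path of $A$ joining the two endpoints of $e$ through the contracted degree-$2$ vertices, with the deleted subtrees hanging off along it, oriented according to the canonical end and side of $e$. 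Setting $\Phi(A)=(S,(\tau_e^\bu)_e,(\tau_v)_v)$ defines the required mapping.

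The inverse is constructed by substitution: given $(S,(\tau_i^\bu)_i,(\tau_j)_j)$, I plug each $\tau_j$ into the corresponding non-root vertex $v_j$ of $S$ by identifying its root-leaf with the canonical half-edge at $v_j$ and its $d_{v_j}-1$ legs with the remaining incident half-edges, and I replace each edge $e_i$ of $S$ by $\tau_i^\bu$ using the canonical end and side of $e_i$ to align the root-leaf and the marked-leaf. Because the substitutions take place in pairwise disjoint neighbourhoods of $S$ on $\mSb$, the result is an unambiguously defined $\Dee$-valent map on $\mSb$ having $\be(\mS)$ faces, that is, an element of $\mA^\De_\mS$ whose scheme is exactly $S$; this substitution is clearly the two-sided inverse of $\Phi$. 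The leaf correspondence then follows at once: the non-root leaves of $A$ are precisely the leaves of the trees $\tau_e^\bu$ and $\tau_v$ that are not identified with a vertex of $S$ during the substitution, i.e., neither a root-leaf nor a marked-leaf of any $\tau_e^\bu$, nor a root-leaf or a leg of any $\tau_v$. The one point requiring care is that the cyclic order of half-edges around each scheme vertex be respected by the substitution, which is precisely what the canonical half-edge choices at vertices (and canonical end/side choices at edges) encode.
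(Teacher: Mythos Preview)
Your proposal is correct and follows essentially the same approach as the paper. The paper does not give a formal proof of this lemma; it merely describes the construction in the paragraph preceding the statement (splitting at the scheme vertices, recording the pieces attached to edges as doubly-rooted trees and the pieces attached to vertices as trees with legs, and noting that the substitution is unambiguous once canonical ends, sides, and incident half-edges are fixed) and then concludes with ``hence the following lemma.'' Your write-up is a more careful and explicit version of exactly this argument, including the confluence justification for the well-definedness of the scheme and the bookkeeping for the leaf count. One small wording issue: when you write that $\tilde v$ is a vertex ``of the same $S$-degree $d_v$,'' be aware that the degree of $\tilde v$ in $A$ is in general some $\de\in\De$ with $\de\geq d_v$ (the excess $\de-d_v$ half-edges carry the hanging subtrees); only its degree in the scheme is $d_v$.
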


\begin{figure}[ht!]\begin{center} \input{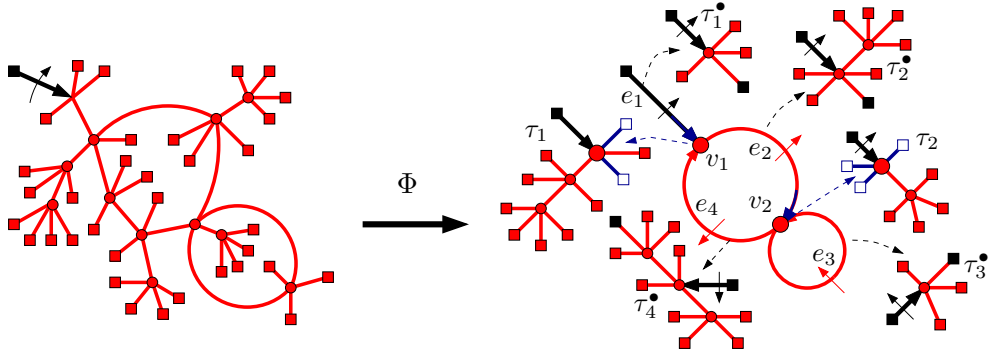}\caption{The bijection $\Phi$: decomposition of a map whose scheme has $e=4$ edges and $v=2$ non-root vertices of respective degrees 3 and 4. Legs  are indicated by white squares.}\label{fig:extension-Phi} \end{center}
\end{figure}

We will now use Lemma~\ref{lem:decomposition-D} in order to prove the following enumeration result.

\begin{thm}\label{thm:nb-Dangular}
Let $\mS$ be any surface with boundary distinct from the disc, let $\De\subseteq\NN^{\geq 3}$ and let $p$ be the greatest common divisor of $\{\de-2,~\de\in\De\}$. Then, the number of vertices of $\De$-angular maps (having all their vertices on the boundary)  is congruent to $2\chi(\mS)$ modulo $p$. Moreover, the asymptotic number of boundary-rooted $\De$-angular maps of $\mS$ having $np+2\chi(\mS)$ vertices is
\begin{equation}\label{eq:asympt-dissection}
M^\De_\mS(np+2\chi(\mS)) =_{n\to
\infty}\frac{a(\mS)p\left(8\ga_\De\rho_\De\right)^{\chi(\mS)}}{4\Gamma(1\!-\!3\chi(\mS)/2)}~(np\!+\!2\chi(\mS))^{-3\chi(\mS)/2}
\,
{\rho_\De}^{-(np+2\chi(\mS))}~\left(1+O\left(n^{-1/2}\right)\right),
\nonumber
\end{equation}
where $\rho_\De$ and $\ga_\De$ are the constants determined by
Equation~\Ref{eq:constants-trees}, and $a(\mS)$ is the number of
cubic schemes of $~\mS$.
\end{thm}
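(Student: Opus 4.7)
The plan is to parallel the proof of Theorem~\ref{thm:nb-triangular}, with Lemma~\ref{lem:decomposition-D} replacing Lemma~\ref{lem:decomposition-cubic} and Lemma~\ref{lem:singularities-trees-general} providing the local expansion of $\gT_\De$. First, I would translate Lemma~\ref{lem:decomposition-D} into the generating function identity
$$
\gA^\De_\mS(z)\;=\;z\sum_{S}\gT_\De'(z)^{e(S)}\prod_{j=1}^{v(S)}\gW_\De^{[d_j(S)-1]}(z),
$$
where the sum runs over the (finitely many) schemes $S$ of $\mS$, and
$$
\gW_\De^{[k]}(z)\;:=\;\sum_{\de\in\De,\,\de\geq k+1}\binom{\de-1}{k}\gT_\De(z)^{\de-1-k}
$$
is the generating function of leaf-rooted $\Dee$-valent trees with $k$ legs, obtained by decomposing at the unique neighbour of the root-leaf (the leading $z$ records the root-leaf of the map).

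Next, I would perform the singularity analysis near $z=\rho_\De$. Combining $\gT_\De(z)=z\gY_\De(z^p)$ with Lemma~\ref{lem:singularities-trees-general} and the estimate $1-(z/\rho_\De)^p\sim p(1-z/\rho_\De)$ gives
$$
\gT_\De'(z)\;\sim\;\frac{\ga_\De}{2\rho_\De}\left(1-\frac{z}{\rho_\De}\right)^{-1/2}.
$$
Each $\gW_\De^{[k]}(z)$ is a polynomial in $\gT_\De(z)$, hence analytic at $\rho_\De$, so the contribution of a scheme $S$ has singular exponent $-e(S)/2$ up to a nonzero constant. The dominant singular contribution thus comes from the $a(\mS)$ cubic schemes, for which $e=2-3\chi(\mS)$ and all $v=1-2\chi(\mS)$ non-root vertices have degree $3$. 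The identity $2\gW_\De^{[2]}(\rho_\De)=\sum_{\de\in\De}(\de-1)(\de-2)\tau_\De^{\de-3}$ combined with the defining equation of $\ga_\De$ yields the key relation $\gW_\De^{[2]}(\rho_\De)=\rho_\De/\ga_\De^2$, after which algebraic simplification produces
$$
\gA^\De_\mS(z)\;\sim_{z\to\rho_\De}\;\frac{a(\mS)\,(8\ga_\De\rho_\De)^{\chi(\mS)}}{4}\left(1-\frac{z}{\rho_\De}\right)^{-(1-3\chi(\mS)/2)}.
$$

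I would then handle periodicity and extract coefficients. Because every $\de\in\De$ satisfies $\de\equiv 2\pmod p$, Lemma~\ref{lem:periodicity} shows that $\gT_\De'(z)$ is supported on exponents $\equiv 0\pmod p$ and $\gW_\De^{[2]}(z)$ on exponents $\equiv -1\pmod p$. Combined with the prefactor $z$ and the $1-2\chi(\mS)$ copies of $\gW_\De^{[2]}$ appearing for each cubic scheme, this forces $\gA^\De_\mS(z)$ to be supported on exponents $\equiv 2\chi(\mS)\pmod p$, which is the announced congruence on the number of vertices. Writing $\gA^\De_\mS(z)=z^rH(z^p)$ with $r$ the representative of $2\chi(\mS)\bmod p$ in $\{0,\ldots,p-1\}$, and applying the transfer theorem of \cite{FlajoletSedgewig:analytic-combinatorics} to $H(t)$ at $t=\rho_\De^p$ (analyticity in a dented domain being inherited from $\gY_\De$), the substitution $n=mp+r$ introduces the expected factor $p$ and yields the theorem's asymptotic for $|\mA^\De_\mS(n)|$.

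Finally, I would bound the discrepancy between $|\mA^\De_\mS(n)|$ and $|\mM^\De_\mS(n)|$ exactly as in Lemma~\ref{lem:comparison-AandM}: a map in $\mA^\De_\mS$ whose dual is not in $\mM^\De_\mS$ has a face incident to no leaf, which forces at least one tree in its $\Phi$-decomposition to be one-sided. The generating function of one-sided doubly-rooted trees is less singular than $\gT_\De'(z)$ by a factor $(1-z/\rho_\De)^{1/2}$, and the contribution of any non-cubic scheme enjoys the same gain; both yield an $O(n^{-1/2})$ relative error. I expect the main obstacle to be the algebraic bookkeeping in the second step, where the various factors of $\rho_\De$, $\ga_\De$ and $\gW_\De^{[2]}(\rho_\De)$ must combine correctly to produce the clean expression $(8\ga_\De\rho_\De)^{\chi(\mS)}$; the remainder is a direct generalisation of the triangular argument.
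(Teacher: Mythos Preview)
Your proposal follows the same architecture as the paper's proof: translate Lemma~\ref{lem:decomposition-D} into a sum over schemes, identify the cubic schemes as giving the dominant singular contribution, and then compare $A^\De_\mS$ with $M^\De_\mS$ via one-sided trees. The algebra you sketch for the constant $(8\ga_\De\rho_\De)^{\chi(\mS)}$ is correct, and your identification $\gW_\De^{[2]}(\rho_\De)=\rho_\De/\ga_\De^2$ matches the paper's $\kappa_2=(\rho_\De/\ga_\De)^2$ (Lemma~\ref{lem:singularity}).

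There is, however, one genuine gap. You write that ``each $\gW_\De^{[k]}(z)$ is a polynomial in $\gT_\De(z)$, hence analytic at $\rho_\De$''. This is only true when $\De$ is finite; the theorem allows arbitrary $\De\subseteq\NN^{\geq 3}$, and for infinite $\De$ the expression $\sum_{\de\in\De}\binom{\de-1}{k}\gT_\De(z)^{\de-1-k}$ is an infinite series in $\gT_\De(z)$. One must then argue separately that it converges and admits the required singular expansion in a dented domain at $\rho_\De$. The paper does not rely on polynomiality; instead it derives the differential recursion $\gT_{\De,\ell}(z)=\frac{1}{\ell\,\gT_\De'(z)}\frac{d}{dz}\gT_{\De,\ell-1}(z)$ (from $\gT_{\De,1}=1-1/\gT_\De'$), passes to the variable $t=z^p$, and uses this recursion together with the nonvanishing of $\gT_\De'$ in a dented domain to establish analyticity and the expansion $\kappa_\ell+O(\sqrt{1-t/\rho_\De^p})$ for all $\ell$ (Lemma~\ref{lem:singularity}). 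Your argument can be repaired along similar lines, or alternatively by checking that $|\gT_\De(z)|<1$ persists in a dented domain so that the power series in $\gT_\De$ converges there; but as written the step does not cover infinite $\De$.

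A second, smaller point: your periodicity argument (``$\gA^\De_\mS$ is supported on exponents $\equiv 2\chi(\mS)\pmod p$'') is carried out only for the cubic-scheme contributions. Since $\gA^\De_\mS$ is a sum over \emph{all} schemes, you need the congruence for each scheme $S$. The paper obtains this uniformly (Lemma~\ref{lem:periodicity2}) by observing that for any scheme with $e$ edges and non-root vertex degrees $d_1,\ldots,d_v$, the exponent shift is $1+\sum_j(1-(d_j-1))=2+2v-2e=2\chi(\mS)$ by Euler's relation; the same computation closes your gap immediately.
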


Theorem~\ref{thm:nb-Dangular} generalises Theorem~\ref{thm:nb-triangular} since for $\De=\{3\}$, one has $p=1$, $\rho_\De=1/4$ and $\ga_\De=1/2$. The rest of this section is devoted to the proof of Theorem~\ref{thm:nb-Dangular}. \\

We denote by $\gA_\mS^\De(z)$ the generating function of the set $\mA_\mS^\De$ of leaf-rooted $\Dee$-valent maps counted by number of leaves. By partitioning the set of maps in $\mA_\mS^\De$ according to their scheme,  one gets
\begin{equation}\label{eq:sum-schemes}
\gA_\mS^\De(z)= \sum_{S\textrm{ scheme}}\gF_S^\De(z),
\end{equation}
where the sum is over the schemes of $\mS$ and $\gF_S^\De(z)$ is the generating function of the subset of  maps in $\mA_\mS^\De$ having scheme $S$ counted by number of leaves. Moreover, for a scheme $S$ with  $e$ edges and $v$ non-root vertices of respective degrees $d_1,\ldots,d_v$,  Lemma~\ref{lem:decomposition-D} gives
\begin{equation}\label{eq:product-degree}
\gF_S^\De(z)~=~z\, \left(\gT'_\De(z)\right)^e\, \prod_{i=1}^v \gT_{\De,d_i-1}(z),
\end{equation}
where $\gT_{\De,\ell}(z)$ is the generating function of $\Dee$-valent leaf-rooted trees with $\ell$ legs counted by number of leaves which are neither legs nor root-leaves.\\

It is now convenient to introduce a change of variable for dealing with the periodicity of the number of leaves of maps in $\mA_\mS^\De$.

\begin{lemma}\label{lem:periodicity2}
Let $\De\subseteq \NN^{\geq 3}$ and let $p=\gcd(\de-2,~\de\in\De)$. For any  positive integer $\ell$,
there exists a power series $\gY_{\De,\ell}(t)$ in $t$ such that
$\gT_{\De,\ell}(z)=z^{1-\ell}\gY_{\De,\ell}(z^p)$. Moreover, the
generating function of  leaf-rooted $\Dee$-valent maps on $\mSb$
satisfies
\begin{equation}\label{eq:sum-product}
\gA^\De_\mS(z)=z^{2\chi(\mS)}\gB^\De_\mS(z^p),~ \textrm{ with } \gB^\De_\mS(t)=\!\!\!\sum_{S\textrm{ scheme}}\!\left(pt\gY_{\De}'(t)+\gY_{\De}(t)\right)^{|E_S|}\, \prod_{u \in V_S} \gY_{\De,\deg(u)-1}(t).
\end{equation}
where the sum is over all the schemes $S$ of $\mS$ and $E_S$, $V_S$ are respectively the set of edges and non-root vertices of the scheme $S$ and $\deg(u)$ is the degree of the vertex $u$.
\end{lemma}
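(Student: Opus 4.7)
The plan is to prove the two statements of the lemma separately; the first is a degree-counting identity, while the second is a straightforward substitution combined with an Euler-relation computation applied to the scheme.

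\textbf{Periodicity of $\gT_{\De,\ell}$.} First I would establish the congruence on the number of leaves. Consider a $\Dee$-valent leaf-rooted tree with $\ell$ legs, and let $n$ denote the number of leaves that are neither the root nor legs. The tree has exactly $1+n+\ell$ vertices of degree~$1$ (the root-leaf, the $n$ ordinary leaves, and the $\ell$ legs, the latter being marked vertices of degree~$1$ which will be glued in the reconstruction) and some number $m$ of internal vertices whose degrees lie in $\De$. The usual handshake relation $\sum_v \deg(v) = 2e = 2(v-1)$ then gives
$$\sum_{\text{internal } v}(\deg(v)-2) = n+\ell-1.$$
Since every summand on the left is divisible by $p=\gcd(\de-2,~\de\in\De)$, one obtains $n \equiv 1-\ell \pmod{p}$. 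Consequently only powers $z^{n}$ with $n\equiv 1-\ell\pmod p$ appear in $\gT_{\De,\ell}(z)$, and the factorisation $\gT_{\De,\ell}(z)=z^{1-\ell}\gY_{\De,\ell}(z^p)$ defines the required power series $\gY_{\De,\ell}(t)$ uniquely.

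\textbf{The equation for $\gA^\De_\mS$.} I would start from the identity obtained by combining \Ref{eq:sum-schemes} and \Ref{eq:product-degree}, namely
$$\gA^\De_\mS(z)~=~\sum_{S\textrm{ scheme}} z\,(\gT'_\De(z))^{|E_S|}\prod_{u\in V_S}\gT_{\De,\deg(u)-1}(z),$$
and substitute the periodic factorisations. Setting $t=z^p$, Lemma~\ref{lem:periodicity} yields $\gT_\De(z)=z\gY_\De(z^p)$, hence
$$\gT'_\De(z)=\gY_\De(t)+pz^p\gY_\De'(t)=\gY_\De(t)+pt\gY_\De'(t),$$
which matches the factor appearing in the statement. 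Likewise each tree-with-legs factor contributes $\gT_{\De,\deg(u)-1}(z)=z^{2-\deg(u)}\gY_{\De,\deg(u)-1}(t)$ by the first part.

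\textbf{Extracting the exponent $2\chi(\mS)$.} The remaining step is to show that the total power of $z$ factored out of each scheme-term is exactly $2\chi(\mS)$, which is the same for every scheme. The total exponent is
$$1+\sum_{u\in V_S}\bigl(2-\deg(u)\bigr)=1+2|V_S|-\sum_{u\in V_S}\deg(u).$$
Since the scheme $S$ is a map on $\mSb$ with $|V_S|+1$ vertices (including the root-leaf), $|E_S|$ edges, and $\be(\mS)$ faces, Euler's formula $\chi(\mSb)=\chi(\mS)+\be(\mS)$ gives $|V_S|-|E_S|=\chi(\mS)-1$, while the handshake relation together with $\deg(\text{root})=1$ gives $\sum_{u\in V_S}\deg(u)=2|E_S|-1$. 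Substituting yields
$$1+2|V_S|-(2|E_S|-1)=2+2(|V_S|-|E_S|)=2\chi(\mS).$$
Factoring out $z^{2\chi(\mS)}$ from every term of the sum and re-assembling the remaining $t$-series gives exactly the claimed $\gB^\De_\mS(t)$. The only subtle point in the argument is verifying that this exponent is indeed independent of the scheme; the key observation is that the Euler-characteristic identity $|V_S|-|E_S|=\chi(\mS)-1$ holds uniformly, which is ultimately what makes the change of variable $t=z^p$ consistent across all scheme contributions.
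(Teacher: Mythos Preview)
Your proof is correct and follows essentially the same approach as the paper's: both substitute the periodic factorisations into \Ref{eq:sum-schemes}--\Ref{eq:product-degree}, compute $\gT_\De'(z)=pt\gY_\De'(t)+\gY_\De(t)$, and use the handshake relation $\sum_{u\in V_S}\deg(u)=2|E_S|-1$ together with Euler's formula on $\mSb$ to identify the exponent $2\chi(\mS)$. The only minor difference is in the first part, where the paper invokes Lemma~\ref{lem:periodicity} (non-root leaves of $\Dee$-valent trees are $\equiv 1\pmod p$) and then subtracts the $\ell$ legs, whereas you redo the degree-sum argument directly on the tree with legs; both arguments are equivalent.
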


Observe that
Equation~\Ref{eq:sum-product} shows that the coefficient of $z^n$ in
the series $\gA_\mS^\De(z)$ is $0$ unless $n$ is congruent to
$2\chi(\mS)$ modulo $p$.
In other words, the number of leaves of $\Dee$-valent maps on $\mSb$  is congruent to $2\chi(\mS)$ modulo $p$. \\

\begin{proof} 
\ite By Lemma~\ref{lem:periodicity}, the number of non-root
leaves of $\Dee$-valent trees  is congruent to~$1$ modulo~$p$.
Hence, the number of non-root, non-marked leaves of a $\Dee$-valent
trees with $\ell$ legs is congruent to $1-\ell$ modulo $p$. This
ensures the existence of the power series $\gY_{\De,\ell}(t)$ such
that $\gT_{\De,\ell}(z)=z^{1-\ell}\gY_{\De,\ell}(z^p)$. \ite Let $S$
be a scheme with  $e$ edges and $v$ non-root vertices of respective
degrees $d_1,\ldots,d_v$. Plugging the identities
$\gT_{\De,\ell}(z)=z^{1-\ell}\gY_{\De,\ell}(z^p)$
into~\Ref{eq:product-degree} gives
\begin{equation}\label{eq:product-degree2}
\gF_S^\De(z)~=~z^{1+2v-\sum_{i=1}^v d_i}\, \left(\ddz\left[z\gY_\De(z^p)\right]\right)^e \, \prod_{i=1}^v \gY_{\De,d_i-1}(z^p).
\end{equation}
The sum $\sum_{i=1}^v d_i$ is the number of edge-ends which are not
the root-ends. Hence, $\sum_{i=1}^v d_i=2e-1$ and by Euler relation,
$1+2v-\sum_{i=1}^v
d_i=2+2v-2e=2\left(\chi(\mSb)-\be(\mS)\right)=2\chi(\mS)$ (since the
scheme $S$ is a map on $\mSb$ having $v+1$ vertices, $e$ edges and
$\be(\mS)$ faces). Moreover,
$\ddz\left[z\gY_\De(z^p)\right]=pz^p\gY'_\De(z^p)+\gY_\De(z^p)$.
Thus continuing Equation~\Ref{eq:product-degree2}, gives
\begin{equation}\label{eq:product-degree3}
\gF_S^\De(z)~=~z^{2\chi(\mS)}\, \left(pz^p\gY_\De'(z^p)+\gY_\De(z^p)\right)^{e}~ \prod_{i=1}^v \gY_{\De,d_i-1}(z^p),
 \end{equation}
Replacing $z^p$ by $t$ in the right-hand-side of~\Ref{eq:product-degree3} and summing over all the schemes of $\mS$ gives Equation~\Ref{eq:sum-product} from Equation~\Ref{eq:sum-schemes}.
\end{proof}

We now study the singularities of the generating functions of trees with legs.

\begin{lemma} \label{lem:singularity}
Let $\De\subseteq \NN^{\geq 3}$, let  $p=\gcd(\de-2,~\de\in\De)$ and let $\rho_\De$ and $\ga_\De$ be the constants defined by Equation~\Ref{eq:constants-trees}. For any positive integer $\ell$, the generating function $\gY_{\De,\ell}(t)$ is analytic in a domain dented at 
$t={\rho_\De}^p$. Moreover, there exists a constant $\ka_\ell$ such that the expansion
$$\gY_{\De,\ell}(t)=_{t\to {\rho_\De}^p} \ka_\ell+O\left(\sqrt{1-\frac{t}{{\rho_\De}^p}}\right),$$
is valid in this domain. In particular, $\displaystyle \ka_2= \left(\frac{\rho_\De}{\ga_\De}\right)^2$.
\end{lemma}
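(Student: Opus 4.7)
The plan is to express $\gT_{\De,\ell}(z)$ as an analytic composition involving $\gT_\De(z)$ and then transfer the singular behaviour of $\gT_\De$ supplied by Lemma~\ref{lem:singularities-trees-general} through this composition.

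First I would perform a root-decomposition of a $\Dee$-valent leaf-rooted tree with $\ell\geq 1$ legs: the unique neighbour $v$ of the root-leaf must have degree $d\in\De$ (since $d\geq \ell+1\geq 2$), and among its $d-1$ non-root incidences one chooses an unordered subset of $\ell$ to be the legs, the remaining $d-1-\ell$ positions carrying independent $\Dee$-valent leaf-rooted subtrees. This yields
\[
\gT_{\De,\ell}(z)\;=\;\sum_{d\in\De}\binom{d-1}{\ell}\,\gT_\De(z)^{\,d-1-\ell}\;=\;\frac{1}{\ell!}\,\phi^{(\ell)}\!\big(\gT_\De(z)\big),
\]
where $\phi(u)=\sum_{d\in\De}u^{d-1}$ is analytic on $\{|u|<1\}$ (all exponents are $\geq 2$), hence so is $\phi^{(\ell)}$; in particular $\phi^{(\ell)}$ is holomorphic in a neighbourhood of $u=\tau_\De<1$.

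Next I would pass to the variable $t=z^p$. Using the congruence $d\equiv 2\pmod p$ for every $d\in\De$, together with $\gT_\De(z)=z\,\gY_\De(z^p)$ and $\gT_{\De,\ell}(z)=z^{1-\ell}\gY_{\De,\ell}(z^p)$, matching powers of $z$ in the identity above produces
\[
\gY_{\De,\ell}(t)\;=\;\sum_{k\in K,\;kp+1\geq \ell}\binom{kp+1}{\ell}\,t^k\,\gY_\De(t)^{\,kp+1-\ell}.
\]
By Lemma~\ref{lem:singularities-trees-general}, $\gY_\De$ is analytic in a domain dented at $\rho_\De^p$ with $\gY_\De(\rho_\De^p)=\tau_\De/\rho_\De$; shrinking this domain slightly keeps $\gT_\De(z)=z\gY_\De(z^p)$ strictly inside $\{|u|<1\}$, the region of analyticity of $\phi^{(\ell)}$, so the composition is analytic in a dented domain at $t=\rho_\De^p$. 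Substituting the square-root expansion $\gY_\De(t)=\alpha_0+O\!\big(\sqrt{1-t/\rho_\De^p}\big)$ into the analytic function $\phi^{(\ell)}$ then gives
\[
\gY_{\De,\ell}(t)\;=\;\ka_\ell+O\!\Big(\!\sqrt{1-t/\rho_\De^p}\Big),\qquad \ka_\ell\;=\;\frac{\rho_\De^{\ell-1}}{\ell!}\,\phi^{(\ell)}(\tau_\De).
\]

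Finally, for $\ell=2$, I would compute $\ka_2=\frac{\rho_\De}{2}\sum_{d\in\De}(d-1)(d-2)\tau_\De^{d-3}$ and invoke the defining identity \Ref{eq:constants-trees} for $\ga_\De$, namely $\sum_{d\in\De}(d-1)(d-2)\tau_\De^{d-3}=2\rho_\De/\ga_\De^2$, to conclude $\ka_2=(\rho_\De/\ga_\De)^2$. The only technical point I anticipate is the uniform convergence of the (possibly infinite) series defining $\gY_{\De,\ell}(t)$ on the dented domain; this reduces cleanly to keeping $|\gT_\De(z)|<1-\epsilon$ throughout, which follows from $\tau_\De<1$ and the continuity of $\gT_\De$ guaranteed by Lemma~\ref{lem:singularities-trees-general}.
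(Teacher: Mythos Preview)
Your argument is correct and is in fact more direct than the paper's. Both proofs start from the same root decomposition $\gT_{\De,\ell}(z)=\sum_{d\in\De}\binom{d-1}{\ell}\gT_\De(z)^{d-1-\ell}$, but then diverge. The paper rewrites this as a recursion $\gT_{\De,\ell}=\frac{1}{\ell\,\gT_\De'}\frac{d}{dz}\gT_{\De,\ell-1}$, passes to the variable $t$, and must then prove that $\gZ_\De(t)=pt\gY_\De'(t)+\gY_\De(t)$ does not vanish on the dented domain (showing in fact $|\gZ_\De|>1/2$) before running an induction on $\ell$; the value of $\ka_2$ is then extracted from the singular expansion of $\gZ_\De$. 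You instead recognise the sum as $\frac{1}{\ell!}\phi^{(\ell)}(\gT_\De)$ with $\phi(u)=\sum_{d\in\De}u^{d-1}$ analytic on $|u|<1$, and push the square-root expansion of $\gY_\De$ through this analytic function in one step. This bypasses the non-vanishing argument for $\gZ_\De$ entirely and yields the closed form $\ka_\ell=\frac{\rho_\De^{\ell-1}}{\ell!}\phi^{(\ell)}(\tau_\De)$ for every $\ell$, from which $\ka_2=(\rho_\De/\ga_\De)^2$ follows immediately from~\Ref{eq:constants-trees}.

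One small point of presentation: when you invoke ``$|\gT_\De(z)|<1-\epsilon$'' to control the series on the dented domain, you are implicitly using the $t$-variable quantity $|t\,\gY_\De(t)^p|$ (since $|\gT_\De(z)|^p=|t\,\gY_\De(t)^p|$). It would be cleaner to state the bound directly in $t$: for $|t|\leq\rho_\De^p$ the non-negativity of the coefficients of $\gY_\De$ gives $|t\,\gY_\De(t)^p|\leq\rho_\De^p(\tau_\De/\rho_\De)^p=\tau_\De^p<1$, and continuity near $t=\rho_\De^p$ extends this into the dented region. With that phrasing the argument is self-contained in the $t$-variable and avoids any ambiguity about branches of $z=t^{1/p}$.
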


\begin{proof} 
\ite By considering the decomposition at the root of $\Dee$-valent leaf-rooted trees with $\ell$ legs, one gets
\begin{equation}\label{eq:tree+legs}
\gT_{\De,\ell}(z)=\sum_{\de\in \De,~\de> \ell}{\de-1 \choose l}\gT_\De(z)^{\de-\ell-1}.
\end{equation}
In particular, for $\ell=1$ this gives
$$\gT_{\De,1}(z)=\sum_{\de\in\De}(\de-1)\gT_{\De}(z)^{\de-2}~=~\frac{1}{\gT_\De'(z)}\frac{d}{dz}\pare{\sum_{\de\in\De}\gT_{\De}(z)^{\de-1}}$$
And using Equation~\Ref{eq:decomposition-tree} gives
\begin{eqnarray}\label{eq:T1}
\gT_{\De,1}(z)&=&\frac{1}{\gT_\De'(z)}\frac{d}{dz}(\gT_\De(z)-z) ~=~1-\frac{1}{\gT_\De'(z)}.
\end{eqnarray}
Moreover for $\ell>1$, Equation~\Ref{eq:tree+legs} gives 
$$\gT_{\De,\ell}(z)= \frac{1}{\ell\gT'_\De(z)}\frac{d}{dz}\gT_{\De,\ell-1}(z).$$
Making the change of variable $t=z^p$ gives 
\begin{equation}\label{eq:rec-legs}
\gY_{\De,1}(t)=1-\frac{1}{\gZ_\De(t)}~~~\textrm{ and }~~~\forall \ell\geq 2, \gY_{\De,\ell}(t)=\frac{pt\gY_{\De,\ell-1}'(t)+(2-\ell)\gY_{\De,\ell-1}(t)}{\ell\gZ_\De(t)},
\end{equation}
where $\gZ_\De(t)\equiv pt\gY'_\De(t)+\gY_\De(t)$ is such that $\gT'_\De(z)=\gZ_\De(z^p)$. In particular, 
\begin{eqnarray}\label{eq:T2}
\gT_{\De,2}(t)=\frac{pt\gZ_\De'(t)}{2\gZ_\De(t)^3}.
\end{eqnarray}
\ite We now prove that \emph{for all $\ell>0$, the generating function $\gY_{\De,\ell}(t)$ is analytic in a domain dented at  $t={\rho_\De}^p$.}\\
By Lemma~\ref{lem:singularities-trees-general}, the generating function $\gZ_\De(t)\equiv pt\gY'_\De(t)+\gY_\De(t)$  is analytic in a domain dented at  $t={\rho_\De}^p$. Given Equation~\Ref{eq:rec-legs}, the same property holds for the series $\gY_{\De,\ell}(t)$ for all $\ell>0$ provided that $\gZ_\De(t)$ does not cancel in a domain dented at $t={\rho_\De}^p$. It is therefore sufficient to prove that
$\left|\gZ_\De(t)\right|>1/2$ for all $|t|<{\rho_\De}^p$ or equivalently, $\left|\gT'_\De(z)\right|>1/2$ for $|z|<\rho_\De$.

First observe that $\lim_{z\to \rho_\De} \gT_\De(z)= \tau_\De$ by Lemma~\ref{lem:singularities-trees-general}. Hence, for all $|z|<\rho_\De$, $\left|\gT_\De(z)\right|\leq \gT_\De(|z|)< \tau_\De$ and by~\Ref{eq:constants-trees}
$$\left|\sum_{\de\in\De}(\de-1){\gT_\De(z)}^{\de-2}\right|\leq \sum_{\de\in\De}(\de-1)\left|\gT_\De(z)\right|^{\de-2}< \sum_{\de\in\De}(\de-1)\tau_\De^{\de-2}=1.$$ 
Moreover, by differentiating~\Ref{eq:decomposition-tree} with respect to $z$, one gets $$\gT'_\De(z)=1+\gT'_\De(z)\sum_{\de\in\De}(\de-1){\gT_\De(z)}^{\de-2}.$$
Hence, $\displaystyle |\gT'_\De(z)|\geq\frac{1}{1+\left|\sum_{\de\in\De}(\de-1){\gT_\De(z)}^{\de-2}\right|}>\frac{1}{2}$.\\
\ite We now prove that  \emph{for all $\ell>0$, the series $\gY_{\De,\ell}(t)$ has an expansion of the form $\ka_\ell+O\left(\sqrt{1-t/{\rho_\De}^p}\right)$ valid in a domain dented at $t={\rho_\De}^p$.} \\
By Lemma~\ref{lem:singularities-trees-general}, the series $\gY_\De(t)$ is analytic in a domain at $t={\rho_\De}^p$. Thus, its expansion at $t={\rho_\De}^p$ can be differentiated term by term. For the series $\gZ_\De(t)\equiv pt\gY'_\De(t)+\gY_\De(t) $ this gives an expansion of the form 
\begin{equation}\label{eq:expansionX}
\gZ_\De(t)~=_{t\to{\rho_\De}^p}~ \left(1-\frac{t}{{\rho_\De}^p}\right)^{-1/2}\left[\sum_{k=0}^n \be_k  \left(1-\frac{t}{{\rho_\De}^p}\right)^{k/2}+o\left(\left(1-\frac{t}{{\rho_\De}^p}\right)^{n/2}\right)\right],
\end{equation}
where $\be_0=-\frac{p}{2}\al_1=\frac{\ga_\De\sqrt{p}}{2\rho_\De}$. Thus, by induction on $\ell$,  the series $\gY_{\De,\ell}(z)$  has an
expansion of the form 
$$\gY_{\De,\ell}(z)=_{t\to{\rho_\De}^p} \sum_{k=0}^n \kappa_{\ell,k} (1-\frac{t}{{\rho_\De}^p})^{k/2}+o\left((1-\frac{t}{{\rho_\De}^p})^{n/2}\right).$$
In particular, Equation~\Ref{eq:T2} gives $\kappa_2\equiv\kappa_{2,0}=\frac{p}{4{\be_0}^4}= \left(\rho_\De/\ga_\De\right)^2$.
\end{proof}

We now complete the proof of Theorem~\ref{thm:nb-Dangular}. By Equation~\Ref{eq:expansionX},
\begin{equation}\label{eq:asymptW}
pt\gY_\De'(t)+\gY_\De(t)\equiv \gZ_\De(t) =_{t\to {\rho_\De}^p}~ \frac{\ga_\De\sqrt{p}}{2{\rho_\De}} \left(1-\frac{t}{{\rho_\De}^p}\right)^{-1/2}\left(1+O\left(1-\frac{t}{{\rho_\De}^p}\right)\right)
\end{equation}
in a domain dented at $t= {\rho_\De}^p$. Thus, by Lemma
\ref{lem:singularity}, the generating function $\displaystyle
\gG_S(t)\equiv \left(pt\gY_\De'(t)+\gY_\De(t)\right)^{e}\,
\prod_{i=1}^{v} \gY_{\De,d_i - 1}(t)$ associated to a scheme~$S$
with $e$ edges and $v$ vertices of respective degrees
$d_1,\ldots,d_v$ has an expansion of the form
$$\gG_S(t)  =_{t\to {\rho_\De}^p}\left(\prod_{i=1}^{v}\ka_{d_i - 1} \right)
\parfrac{\ga_\De\sqrt{p}}{2{\rho_\De}}^{e}
\left(1-\frac{t}{{\rho_\De}^p}\right)^{-e/2}
\left(1+O\left(\sqrt{1-\frac{t}{{\rho_\De}^p}}\right)\right),$$
valid in a domain dented at $t= {\rho_\De}^p$. Moreover, as mentioned above, the number of edges of a scheme of
$\mS$ is at most $e=2-3\chi(\mS)$, with equality if and only if the
scheme is cubic. 
There are $a(\mS)>0$ cubic schemes and all of them
have $v=1-2\chi(\mS)$ non-root vertices. Thus, Lemma~\ref{lem:periodicity2} gives $\gA^\De_\mS(z)=z^{2\chi(\mS)}\gB^\De_\mS(z^p)$ with 
\begin{eqnarray}
\gB^\De_\mS(t) &&\!\!\!\!\!=  \!\!\sum_{S\textrm{ scheme}}\gG_S(t)\nonumber \\
&&\!\!\!\!\!=_{t\to {\rho_\De}^p} a(\mS) \parfrac{\rho_\De}{\ga_\De}^{2-4\chi(\mS)}\!\!\parfrac{\ga_\De\sqrt{p}}{2{\rho_\De}}^{2-3\chi(\mS)}\!\!\left(1-\frac{t}{{\rho_\De}^p}\right)^{3\chi(\mS)/2-1} \!\! \left(1+O\left(\sqrt{1-\frac{t}{{\rho_\De}^p}}\right)\right)\nonumber\\ \label{eq:singularB}
&&\!\!\!\!\!=_{t\to {\rho_\De}^p} \frac{p\, a(\mS)}{4} \parfrac{8\ga_\De}{p^{3/2}\rho_\De}^{\chi(\mS)}\left(1-\frac{t}{{\rho_\De}^p}\right)^{3\chi(\mS)/2-1}  \left(1+O\left(\sqrt{1-\frac{t}{{\rho_\De}^p}}\right)\right),
\end{eqnarray}
the expansion being valid in a domain dented at $t= {\rho_\De}^p$. \\

Applying standard techniques (see
\cite{FlajoletSedgewig:analytic-combinatorics}) one can obtain the
asymptotic behaviour of the coefficient
$[z^{np+2\chi(\mS)}]\gA^\De_\mS(z)= [t^n]\gB^\De_\mS(t)$ from the
singular behaviour of the series $\gB^\De_\mS(t)$:
\begin{eqnarray}
\displaystyle [t^n]\gB^\De_\mS(t)&\!\!\! =_{n\to \infty}\!\!&\displaystyle \frac{p\, a(\mS)}{4\Gamma(1\!-\!3\chi(\mS)/2)}\parfrac{8\ga_\De}{p^{3/2}\rho_\De}^{\chi(\mS)}~n^{-3\chi(\mS)/2}~{\rho_\De}^{-np}\left(1+O\left(n^{-1/2}\right)\right) \nonumber\\
&\!\!=_{n\to \infty}\!\!\!&\displaystyle \frac{p\, a(\mS)\left(8\ga_\De\rho_\De\right)^{\chi(\mS)}}{4\Gamma(1\!-\!3\chi(\mS)/2)}~(np\!+\!2\chi(\mS))^{-3\chi(\mS)/2}~{\rho_\De}^{-(np+2\chi(\mS))}\left(1+O\left(n^{-1/2}\right) \right)
\label{eq:asymptotic-B}
\end{eqnarray}

In order to conclude the proof of Theorem~\ref{thm:nb-Dangular}, it suffices to compare the number of maps in $\mM^\De_\mS(n)$ with the number of maps in $\mA^\De_\mS(n)$ and prove that 
\begin{equation}\label{eq:comparison-AandM-general}
M^\De_\mS(n)=_{n\to \infty} A^\De_\mS(n)\left(1+O\left(n^{-1/2}\right)\right).
\end{equation}
One can write a proof of Equation~\Ref{eq:comparison-AandM-general} along the line of the  proof of Lemma~\ref{lem:comparison-AandM}. We omit such a proof since a stronger statement (Theorem~\ref{thm:maps-to-dissections}) will be proved in the next section. \findem


\section{From maps to dissections}\label{section:maps-to-dissections}

In this section we prove that the number of  maps and the number of
dissections are asymptotically equivalent. More precisely, we prove
the following theorem.

\begin{thm}\label{thm:maps-to-dissections}
Let $\mS$ be a surface with boundary and let $\De\subseteq \NN^{\geq 3}$.  The number of maps in $D_\mS^\De(n)$, in $\mM_\mS^\De(n)$ and in $\mA_\mS^\De(n)$ satisfy 
$$D_\mS^\De(n)~=~ M_\mS^\De(n)\left(1+O\left(n^{-1/2}\right)\right)~=~ A_\mS^\De(n)\left(1+O\left(n^{-1/2}\right)\right).$$
\end{thm}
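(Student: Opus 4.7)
The plan is to establish the two asymptotic equivalences $A_\mS^\De(n) = M_\mS^\De(n)(1 + O(n^{-1/2}))$ and $M_\mS^\De(n) = D_\mS^\De(n)(1 + O(n^{-1/2}))$ separately, and then chain them using the inclusions $\mD_\mS^\De \subseteq \mM_\mS^\De \subseteq \mA_\mS^\De$. In each case the strategy is to control, via the scheme decomposition of Lemma~\ref{lem:decomposition-D}, the generating function of the ``bad'' maps lying in the larger class but not in the smaller, and show that its singularity at $t={\rho_\De}^p$ carries an extra factor of $\sqrt{1-t/{\rho_\De}^p}$ compared to $\gB_\mS^\De(t)$. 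The transfer theorem then yields the claimed $O(n^{-1/2})$ relative error.

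For the first equivalence, the approach is to extend the proof of Lemma~\ref{lem:comparison-AandM} to arbitrary $\De\subseteq \NN^{\geq 3}$ and arbitrary $\mS$. A map $A\in \mA_\mS^\De \setminus \mM_\mS^\De$ has at least one face on $\mSb$ with no incident leaf. Via $\Phi$, the leaves of $A$ are distributed among the doubly-rooted trees $\tau_i^\bu$ (attached to the edges of the scheme) and the trees-with-legs $\tau_j$ (attached to the non-root vertices), and each leaf lies in the face of $A$ determined by the side or sector it occupies. A face lacking any leaf forces every tree along its boundary to be degenerate on the corresponding side: one of the doubly-rooted trees must be one-sided, or a tree-with-legs must have no leaves in the relevant corner. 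Replacing one free tree factor in~\Ref{eq:product-degree} by such a degenerate one multiplies the dominant singularity given by Lemma~\ref{lem:singularity} by an additional $\sqrt{1-t/{\rho_\De}^p}$, producing the $n^{-1/2}$ gain.

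For the second equivalence, the plan is to argue that any map in $\mM_\mS^\De \setminus \mD_\mS^\De$ possesses a small combinatorial defect that can be isolated and enumerated. A $\De$-angular map fails to be a dissection when either some face of degree $k$ has fewer than $k$ distinct incident vertices (for instance if the map has a loop or an edge with both sides on the same face) or when two faces share more than a single edge or vertex (for instance if the map contains a pair of parallel edges). For each such configuration I would select a minimal witness, for example an innermost contractible loop or bigon, and cut the map along it; this detaches a $\De$-angular dissection of a disc and leaves a residual $\De$-angular map on $\mS$ with strictly fewer free leaves. Summing over the finitely many defect types and possible locations, and applying the scheme decomposition to the residual map, produces a generating function whose dominant singularity at $t={\rho_\De}^p$ has the desired extra $\sqrt{1-t/{\rho_\De}^p}$ factor.

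The main obstacle is the second equivalence. Unlike the first, where the bad condition (a face with no leaf) is a single clean statement expressible directly in the tree factors, the non-dissection condition splits into several distinct cases that must each be enumerated and shown subdominant, and non-contractible defects (for instance, a loop representing a non-trivial cycle on $\mS$) need separate treatment. The key technical step will be to produce a canonical ``innermost'' choice of defect so that the residual map is again a $\De$-angular map on the same surface, allowing the bound to be phrased uniformly in terms of $\gA_\mS^\De$ at the cost of exactly one extra $\sqrt{1-t/{\rho_\De}^p}$ factor in the singular behavior.
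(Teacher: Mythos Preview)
Your overall framework (bound the generating function of bad maps by something with an extra $\sqrt{1-t/{\rho_\De}^p}$ factor, then transfer) is exactly right, and your treatment of $\mA_\mS^\De\setminus\mM_\mS^\De$ via one-sided trees is essentially the paper's Lemma~\ref{lem:comparison-AandM} generalised. But the paper does \emph{not} split the argument into $\mA\to\mM$ and $\mM\to\mD$; it goes directly from $\mA_\mS^\De$ to $\mD_\mS^\De$ in a single step, and the way it does so is precisely the idea your plan is missing.

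The key device is a purely tree-theoretic sufficient condition for a map $A\in\mA_\mS^\De$ to be dual to a \emph{dissection}: call a doubly-rooted tree \emph{balanced} if its spine contains three edges whose removal yields four two-sided pieces. The paper proves (Lemma~\ref{lem:balanced}) that if \emph{all} the doubly-rooted trees $\tau_1^\bu,\ldots,\tau_e^\bu$ in $\Phi(A)$ are balanced, then $A$ is already the dual of a dissection. The argument is that the dissection property translates into conditions on the \emph{runs} of $A$ (maximal leaf-to-leaf walks around a face), and balancedness guarantees that any run, or any pair of intersecting runs, is contained in a single plane tree obtained by cutting $A$ at all but two of the $f_{i,j}$, so the run conditions reduce to the tree case where they hold trivially. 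One then bounds the generating function of non-balanced doubly-rooted trees (Lemma~\ref{lem:expansion-unbalanced}) and concludes exactly as you describe.

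This bypasses entirely the defect-by-defect analysis you sketch for $\mM\to\mD$. Your plan there is the genuine gap: cutting along an innermost contractible bigon and saying the residual has ``strictly fewer free leaves'' does not by itself produce the needed $\sqrt{1-t/{\rho_\De}^p}$ factor (a loss of $O(1)$ leaves gives only a constant), you have not said what to do with non-contractible defects, and making the ``canonical innermost'' choice injective across all defect types on a general surface is delicate. The paper's balancedness criterion avoids all of this by never touching the dissection axioms on $\mS$ directly.
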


By Theorem~\ref{thm:maps-to-dissections}, the asymptotic enumeration of $\De$-angular maps given by Theorem~\ref{thm:nb-Dangular} also applies to $\De$-angular dissections. The rest of this section is devoted to the proof of Theorem~\ref{thm:maps-to-dissections}. \\

The inequalities $D_\mS^\De(n)\leq M_\mS^\De(n)\leq A_\mS^\De(n)$ are obvious, hence it suffices to prove 
\begin{equation}\label{eq:A-to-dissections}
A_\mS^\De(n)-D_\mS^\De(n)= O\left(\frac{A_\mS^\De(n)}{\sqrt{n}}\right).
\end{equation}
For this purpose, we will give a sufficient condition for a map $A$ in $\mA^\De_\mS$ to be the dual of a dissection in $D_\mS^\De(n)$.\\

Let $\tau^\bu$ be a doubly-rooted tree and let $e_1,\ldots,e_k$ be some edges appearing in this order on the path from the root-leaf to the marked leaf. One obtains $k+1$ doubly-rooted trees  $\tau_0^\bu,\ldots, \tau_k^\bu$ by \emph{cutting} the edges $e_1,\ldots,e_k$ in their middle (the middle point of $e_i,i=1\ldots k$ corresponds to the marked vertex of $\tau_{i-1}^\bu$ and to the root-vertex of $\tau_i^\bu$); see Figure~\ref{fig:tree-chain}. A doubly-rooted tree $\tau^\bu$ is said \emph{balanced} if there exist three edges $e_1,e_2,e_3$  on the path from the root-leaf to the marked leaf such that cutting at these edges gives 4 doubly-rooted trees which are all two-sided. For instance, the tree at the left of Figure~\ref{fig:tree-chain} is balanced.

\begin{figure}[ht!]\begin{center} \input{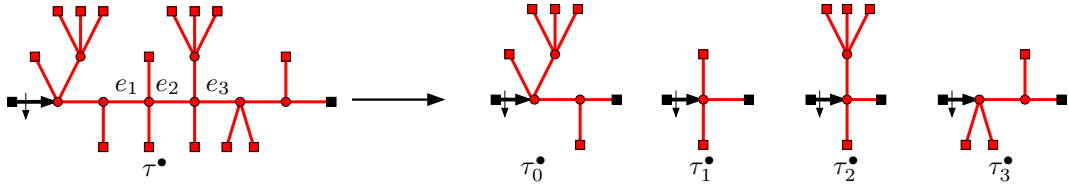}\caption{Cutting the tree $\tau^\bu$ at the edges $e_1,e_2,e_3$.}\label{fig:tree-chain} \end{center}
\end{figure}

\begin{lemma}\label{lem:balanced}
Let $\mS$ be a surface with boundary, let $A$ be a map in $\mA_\mS^\De$ and let $(S, (\tau_1^\bu,\ldots,\tau_e^\bu),$ $(\tau_1,\ldots,\tau_v))$ be its image by the bijection $\Phi$.  If all the doubly-rooted trees $\tau_1^\bu,\ldots,\tau_e^\bu$ are balanced, then $A$ is the dual of a dissection $M$ in $\mD_\mS^\De$.
\end{lemma}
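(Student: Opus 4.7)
The plan is to prove the contrapositive: assume $A \in \mA^\De_\mS$ is not the dual of a dissection in $\mD^\De_\mS$, and exhibit one of the doubly-rooted trees $\tau_i^\bu$ in the decomposition $\Phi(A) = (S, (\tau_1^\bu,\ldots,\tau_e^\bu),(\tau_1,\ldots,\tau_v))$ that fails to be balanced.

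First, I would translate the failure of the dissection conditions into structural conditions on $A$. Recall that $M = A^*$ fails to be a dissection if and only if either some face of $M$ has a repeated vertex on its boundary, or two distinct faces of $M$ intersect in more than one edge or vertex. Under the duality from Section~\ref{section:definitions}, both types of failure correspond to \emph{short return} configurations in $A$: a portion of the boundary walk of some face $\phi$ of $A$ that revisits the same dangling leaf, or visits two corners of the same internal vertex, after traversing only a short path of internal vertices.

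Second, I would exploit the fact that the scheme $S$ is a fixed combinatorial object of bounded size (at most $2-3\chi(\mS)$ edges and non-root vertices), so that every ``long path'' inside $A$ must travel along the spine of some substituted doubly-rooted tree $\tau_i^\bu$. In particular, a short return in $A$ must traverse most of the spine of at least one $\tau_i^\bu$ while remaining along the side of a specific face $\phi$ of $A$. Because the return adds no new leaves branching into $\phi$ along the relevant section of the spine, the spine of $\tau_i^\bu$ must contain a long sub-segment with no leaves branching into $\phi$; the short return then divides this portion of the spine into at most three sub-intervals, at least one of which lacks any leaf branching into $\phi$. Therefore, whatever three edges one uses to cut the spine of $\tau_i^\bu$ into four pieces, at least one of the four pieces must lie entirely inside this leaf-free sub-interval on the side of $\phi$ and is hence one-sided, so $\tau_i^\bu$ is not balanced.

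The main obstacle is making this translation fully rigorous, since the duality of Section~\ref{section:definitions} involves splitting external vertices of $\bb{M}^*$ into dangling leaves in a topologically delicate way. In particular, one must identify (i) which faces of $A$ correspond to which boundary components of $\mS$ and (ii) which no-leaf intervals along the spines are forced by each mode of dissection failure (repeated vertex in one face, versus two faces meeting along two distinct edges or vertices). Once these identifications are set up, the bound of three such intervals (hence four two-sided pieces in the definition of \emph{balanced}) matches precisely the combinatorics of the possible short returns compatible with a dissection failure.
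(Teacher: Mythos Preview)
Your proposal attempts the contrapositive, but the crucial step has a genuine gap. You claim that a dissection failure forces a leaf-free sub-interval on one side of some spine, and then that ``whatever three edges one uses to cut the spine of $\tau_i^\bu$ into four pieces, at least one of the four pieces must lie entirely inside this leaf-free sub-interval''. This inference is false as stated: a single leaf-free stretch, however long, does not defeat \emph{every} choice of three cut-points --- one can simply place all three cuts outside it. Proving that $\tau_i^\bu$ is unbalanced means ruling out \emph{all} triples of cuts, and nothing in your ``short return'' analysis (itself left undefined, and in which a ``short'' return is simultaneously asked to ``traverse most of the spine'') does this. The earlier steps are equally soft: you never specify which failure mode of the dissection produces which configuration in $A$, nor why that configuration must sit on a single spine rather than being spread over several scheme edges and vertex-trees.

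The paper proceeds directly, and the argument is much cleaner. It introduces \emph{runs} of $A$ --- the leaf-to-leaf segments along face boundaries --- and observes that $M$ is a dissection iff (i) no run repeats a vertex and (ii) any two distinct runs meet in at most an edge with its endpoints; both conditions hold automatically for runs in a plane tree with no degree-$2$ vertices. Assuming every $\tau_i^\bu$ is balanced, one fixes edges $f_{i,1},f_{i,2},f_{i,3}$ on each spine whose removal gives four two-sided pieces. Two-sidedness forces every run of $A$ to cross at most one of the $f_{i,j}$, hence any pair of intersecting runs crosses at most two. Since cutting $A$ at all but any two of the $f_{i,j}$ leaves a disjoint union of plane trees, every run and every intersecting pair of runs already lives inside such a tree, so (i) and (ii) follow at once. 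This is precisely where the constant $3$ in the definition of \emph{balanced} is used: two intersecting runs occupy at most two of the cut-edges, and the remaining $3e-2$ cuts still suffice to break $A$ into trees.
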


\begin{proof}
We suppose that all the trees $\tau_1^\bu,\ldots,\tau_e^\bu$ are balanced. Since the trees $\tau_1^\bu,\ldots,\tau_e^\bu$ are two-sided, the map $A$ is the dual of a map  $M$ in $\mM_\mS^\De$. We want to show that the map $M$ is a dissection and for this purpose we will examine the \emph{runs} of $A$.\\ 
A \emph{run} for the map $A$ is a sequence of vertices and edges $R=v_0,e_1,v_1,\ldots,e_k,v_k$ (where $e_i$ is an edge with endpoints $v_{i-1}$ and $v_i$ for $i=1\ldots k$)  encountered when turning around a face of $A$ from one leaf to the next (that is, $v_0$ and $v_k$ are leaves while $v_1,\ldots,v_{k-1}$ are not). Since the surface $\mS$ needs not be orientable, we consider both directions for turning around faces, so that $\bR=v_k,e_k,\ldots,v_1,e_1,v_0$ is also a run called the \emph{reverse run}. 
It is clear from the definition of duality that the vertices of $M$ are in bijection with the runs of $A$ considered up to reversing, while the faces of $M$ are in bijection with the non-leaf vertices of $A$. From these bijections, it is easy to see that the map $M$ is a dissection if and only if 
\begin{enumerate}
\item[(i)] no vertex appears twice in a run,
\item[(ii)] and for any pair of distinct runs $R,R'$ such that $R'$ is not the reverse of $R$, the \emph{intersection} $R\cap R'$ (that is, the set of vertices and edges which appear in both runs) is either empty, made of one vertex, or made of one edge and its two endpoints.
\end{enumerate}
Indeed, Condition (i) ensures that no vertex of $M$ is incident twice to the same face (equivalently, no face of $M$ is incident twice to the same vertex) and Conditions (ii) ensures that any pair of vertices of $M$ which are both incident to two faces $f,f'$ are the endpoints of an edge incident to both $f$ and $f'$ (equivalently, the intersection of two faces of $M$ is either empty, a vertex, or an edge).

Observe that the runs of any tree having no vertex of degree 2 satisfy the Conditions (i) and (ii). We now compare the runs of $A$ to the runs of some trees. For $i=1\ldots e$, we choose some edges $f_{i,1}$, $f_{i,2}$ ,$f_{i,3}$ of the doubly-rooted tree $\tau_i^\bu$ such that cutting at this edges gives 4 doubly-rooted trees  $\tau^\bu_{i,j}$, $j=1\ldots 4$ which are all two-sided.  Observe that by cutting the map $A$ \emph{at all but two} of the edges $f_{i,j}$, for $i=1\ldots e$ and $j=1,2,3$, one obtains a disjoint union of plane trees (none of which has a vertex of degree 2). Moreover, since all the trees  $\tau^\bu_{i,j}$ are two-sided, no run of the map $A$ contains more than one of the edges $f_{i,j}$. Therefore, any run of $A$ is also the run of a tree (having no vertex of degree 2). Hence, no vertex appears twice in a run of $A$ and Condition (i) holds. Similarly, any pair of intersecting runs of $A$ is a pair of runs of a tree. Thus, Condition (ii) holds for $A$.
\end{proof}

We now consider the generating function $\gTh_\De(z)$ of doubly-rooted $\De$-valent trees which are not balanced (counted by number of non-root, non-marked leaves). Since the number of non-root non-marked leaves of $\De$-valent trees is congruent to $p=\gcd(\de-2,~\de\in\De)$, there exists a generating function $\gYh_\De(t)$ such that $\gTh_\De(z)=\gYh_\De(z^p)$. 
\begin{lemma}\label{lem:expansion-unbalanced}
The generating function $\gYh_\De(t)$ of non-balanced trees is analytic in a domain dented at $t= {\rho_\De}^p$ and there is a constant $\widehat{\kappa}$ such that the expansion
$$\gYh_\De(t)=_{t\to {\rho_\De}^p} \widehat{\kappa}+O\left(\sqrt{1-\frac{t}{{\rho_\De}^p}} \right)$$
is valid in this domain.
\end{lemma}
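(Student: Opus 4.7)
The plan is to decompose non-balanced doubly-rooted trees via a greedy cutting procedure and to express $\gTh_\De(z)$ as a polynomial in two auxiliary series whose singular behaviour at $\rho_\De$ is governed by Lemma~\ref{lem:singularities-trees-general}.

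First, I define \emph{greedy cutting} of a doubly-rooted tree $\tau^\bu$: cut at the smallest spine edge whose initial piece is two-sided, and iterate on the remainder. A short exchange argument shows that $\tau^\bu$ is balanced if and only if three greedy cuts succeed: enlarging a spine segment preserves two-sidedness, so if $(c'_1,c'_2,c'_3)$ is any balanced triple and $c_1\leq c'_1$ is the greedy first choice, the cuts $(c'_2,c'_3)$ still split the greedy suffix into two-sided pieces, and the argument recurses. Hence every non-balanced tree decomposes uniquely as a concatenation of $k\in\{0,1,2,3\}$ \emph{minimal two-sided} pieces (two-sided doubly-rooted trees no proper spine prefix of which is two-sided), followed by a one-sided tail. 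Because the glued midpoints are neither root- nor marked-leaves of either piece, concatenation corresponds to multiplying the generating functions in $z$, giving
$$\gTh_\De(z)~=~\gT_\De^{\mathrm{one}}(z)\cdot\bigl(1+\gT_\De^{\mathrm{mts}}(z)+\gT_\De^{\mathrm{mts}}(z)^2+\gT_\De^{\mathrm{mts}}(z)^3\bigr),$$
where $\gT_\De^{\mathrm{one}}$ and $\gT_\De^{\mathrm{mts}}$ are the generating functions, counted by non-root non-marked leaves, of one-sided and minimal two-sided doubly-rooted trees, respectively.

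Second, I express $\gT_\De^{\mathrm{one}}$ and $\gT_\De^{\mathrm{mts}}$ as rational functions of $\gT_\De(z)$ by decomposing along the spine. With $\phi(z):=\sum_{\de\in\De}\gT_\De(z)^{\de-2}$ the generating function of a single spine vertex carrying all of its $\de-2$ attachments on one fixed side, a case analysis (empty spine, plus the two symmetric orientations of a non-empty spine, with the trivial tree counted only once) yields $\gT_\De^{\mathrm{one}}(z)=(1+\phi(z))/(1-\phi(z))$. A parallel analysis for minimal two-sided trees, separating the $N=1$ case (a single spine vertex of degree $\geq 4$ with attachments on both sides) from $N\geq 2$ (first $N-1$ spine vertices all one-sided, last vertex carrying the switch, times $2$ for the orientation), writes $\gT_\De^{\mathrm{mts}}$ as a polynomial in $\gT_\De(z)$ divided by $1-\phi(z)$. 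The key analytic point is that, by \Ref{eq:constants-trees} and $\de\geq 3$,
$$\phi(\rho_\De)~=~\sum_{\de\in\De}\tau_\De^{\de-2}~<~\sum_{\de\in\De}(\de-1)\tau_\De^{\de-2}~=~1,$$
so $1/(1-\phi(z))$ is analytic in a neighbourhood of $\rho_\De$. Combined with Lemma~\ref{lem:singularities-trees-general}, this shows both $\gT_\De^{\mathrm{one}}$ and $\gT_\De^{\mathrm{mts}}$ are analytic in a dented domain at $\rho_\De$ and each admits an expansion of the form $\kappa+O(\sqrt{1-z/\rho_\De})$ there. Plugging into the formula for $\gTh_\De(z)$ and passing to $t=z^p$ delivers the claimed expansion of $\gYh_\De(t)$.

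The main obstacle will be making the greedy bijection precise: one must verify that two-sidedness is preserved under suffix enlargement, that consecutive pieces glue canonically under concatenation (so the decomposition does not overcount orientations), and that the boundary cases --- the trivial single-edge one-sided tail, and the $N=1$ minimal two-sided piece requiring degree $\geq 4$ --- are absorbed without miscounting into the rational formulas for $\gT_\De^{\mathrm{one}}$ and $\gT_\De^{\mathrm{mts}}$.
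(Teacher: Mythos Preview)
Your approach is correct and takes a genuinely different route from the paper's proof. The paper does not set up a bijection: it argues that a non-balanced two-sided tree can be cut at the first edge $e_1$ where the prefix becomes two-sided and at the following spine edge $e_1'$, producing a one-sided piece, a tree-with-one-leg, and a remainder in which at most two further such steps are possible; this yields only a coefficient-wise \emph{inequality}
\[
\gTh_\De(z)\ \leq\ \gTo_\De(z)\Bigl(1+\gT_{\De,1}(z)\gTo_\De(z)\bigl(1+\gT_{\De,1}(z)\gTo_\De(z)(1+\gT_{\De,1}(z)\gTo_\De(z))\bigr)\Bigr),
\]
and then invokes the known expansions of $\gTo_\De$ and $\gT_{\De,1}$ at $\rho_\De$. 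Your greedy cutting instead produces an \emph{equality} $\gTh_\De=\gT_\De^{\mathrm{one}}\sum_{k=0}^{3}(\gT_\De^{\mathrm{mts}})^k$, and your auxiliary series are explicit rational expressions in $\gT_\De$ and $\phi$. This buys you something the paper's argument strictly does not: analyticity of $\gYh_\De$ in a dented domain and the expansion $\widehat\kappa+O(\sqrt{\,\cdot\,})$ follow directly, whereas a coefficient-wise bound alone does not obviously yield analytic continuation beyond the disc of convergence. Both proofs rest on the same analytic fact, which you isolate cleanly as $\phi(\rho_\De)=\sum_{\de\in\De}\tau_\De^{\de-2}<1$; the paper's version is hidden in $\gTo_\De=2\gT_\De/z-1$ and $\gT_{\De,1}(\rho_\De)=1$.

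Two small points to tighten. First, ``balanced iff three greedy cuts succeed'' is slightly misstated: the correct claim (which your decomposition actually uses) is that $\tau^\bu$ is non-balanced iff greedy produces at most three minimal two-sided pieces before terminating with a one-sided tail --- equivalently, balanced iff a fourth greedy cut exists. Your exchange argument proves exactly this. Second, for infinite $\De$ the expressions $\phi,\psi,\chi$ are not polynomials in $\gT_\De$ but convergent power series; since $\tau_\De<1$ they are analytic in a neighbourhood of $\gT_\De=\tau_\De$, so the conclusion is unaffected.
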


\begin{proof}
Let $\tau^\bu$ be a non-balanced doubly-rooted tree. If the tree $\tau^\bu$ is two-sided, then there exists an edge $e_1$ on the path from the root-vertex to the marked vertex such that cutting the tree $\tau^\bu$ at $e_1$ and at the edge  $e_1'$ following $e_1$ gives three doubly-rooted trees $\tau_1^{\bu}$,  $\tau_2^\bu$, ${\tau^\bu}'$ such that $\tau_1^{\bu}$ is one-sided, $\tau_2^{\bu}$ is a tree with one leg, and ${\tau^{\bu}}'$ has no two edges $e_2'$, $e_3'$ such that cutting at these edges gives three two-sided trees.
Continuing this decomposition and translating it into generating functions gives:
$$\gTh_\De(z)\leq \gTo_\De(z)+\gTo_\De(z)\gT_{\De,1}(z)\left[\gTo_\De(z)+\gTo_\De(z)\gT_{\De,1}(z)\left[ \gTo_\De(z)+\gTo_\De(z)^2\gT_{\De,1}(z)\right] \right],$$
where $\gTo_\De(z)$ is the generating function of one-sided trees and $\gT_{\De,1}(z)$ is the generating function of trees with one leg. By performing the change of variable $t=z^p$ one gets 
\begin{equation}\label{eq:bound-Yh}
\gYh_\De(t)\leq \gYo_\De(t)+\gYo_\De(t)\gY_{\De,1}(t)\left[\gYo_\De(t)+\gYo_\De(t)\gY_{\De,1}(t)\left[ \gYo_\De(t)+\gYo_\De(t)^2\gY_{\De,1}(t)\right] \right].
\end{equation}

The number of one-sided trees having $n$ non-marked non-root leaves is $2~T(n+1)$ for $n>0$ and 1 for $n=0$. Hence, $\gTo_\De(z)=2\gT_\De(z)/z-1$ and $\gYo_\De(t)=2\gY_\De(t)-1$.  Thus, Lemma~\ref{lem:singularities-trees-general} implies that the series $\gYo_\De(t)$ is analytic and has an expansion of the form 
$$\gYo_\De(t)=_{t\to {\rho_\De}^p} \frac{2\tau_\De}{\rho_\De}+O\left(\sqrt{1-\frac{t}{{\rho_\De}^p}} \right)$$
valid in a domain dented at $t={\rho_\De}^p$. Similarly, Lemma~\ref{lem:singularity} implies that the series  $\gY_{\De,1}(t)$ 
is analytic and has an expansion of the form 
$$\gY_{\De,1}(t)=_{t\to {\rho_\De}^p} \kappa_1+O\left(\sqrt{1-\frac{t}{{\rho_\De}^p}} \right)$$
valid in a domain dented at $t={\rho_\De}^p$. 
The lemma follows from these expansions and Equation~\Ref{eq:bound-Yh}.
\end{proof}

We are now ready to bound the number of maps in $\mA^\De_\mS(n)$ which are not the dual of a dissection and prove Equation~\Ref{eq:A-to-dissections}.\\

Let $\mAh^\De_\mS$ be the class of maps in $\mA^\De_\mS$ which are not the dual of a dissection in $\mD^\De_\mS$ and let $\gAh^\De_\mS(z)=\gA^\De_\mS(z)-\gD^\De_\mS(z)$ be the corresponding generating function. For any scheme $S$ of $\mS$, we also define $\gFh_S^\De(z)$ as the generating function of maps in $\mAh^\De_\mS$ which have scheme $S$. 
By Lemma~\ref{lem:balanced}, for any map $A$ in $\mAh^\De_\mS$, the image $\Phi(A)=(S,(\tau_1^\bu,\ldots,\tau_e^\bu),(\tau_1,\ldots,\tau_v))$ is such that  one of the $e$ doubly-rooted trees $\tau_1^\bu,\ldots,\tau_e^\bu$ is not balanced. 
Hence,
\begin{equation}\label{eq:comparison-F-Fh}
\gFh_S^\De(z)~\leq~ e\, \frac{\gTh_\De(z)}{\gT_\De'(z)}\, \gF_S^\De(z),
\end{equation}
where $\gT_\De'(z)$ is the generating function of doubly-rooted trees and $\gTh_\De(z)$ is the generating function of non-balanced doubly-rooted  $\Dee$-valent trees.\\

By summing~\Ref{eq:comparison-F-Fh} over all schemes $S$ of $\mS$, one gets
\begin{equation}\label{eq:comparison-A-Ah}
\gAh^\De_\mS(z)=\sum_{S \textrm{ scheme}}\gFh_S^\De(z)\leq  (2-3\chi(\mS))~\frac{\gTh_\De(z)}{\gT_\De'(z)}\sum_{S \textrm{scheme}}\gF_S^\De(z)~=~ (2-3\chi(\mS))\,\frac{\gTh_\De(z)}{\gT_\De'(z)}\gA^\De_\mS(z),
\end{equation}
since $(2-3\chi(\mS))$ is the maximal number of edges of schemes of $\mS$. 
Plugging these identity into~\Ref{eq:comparison-A-Ah} and replacing $z^p$ by $t$ gives
$$\displaystyle \gBh^\De_\mS(t)\leq \frac{(2-3\chi(\mS))\gYh_\De(t)}{\gZ_\De(t)}\gB^\De_\mS(t),$$
where $\gBh^\De_\mS(t)$ is defined by $\displaystyle \gAh^\De_\mS(z)=z^{2\chi(\mS)}\gBh^\De_\mS(z^p)$ and $\gZ_\De(t)$ is defined by $\gT_\De'(z)=\gZ_\De(z^p)$.\\

Given the expansion of $\gZ_\De(t)$ given by Equation~\Ref{eq:expansionX} and the expansion of $\gYh_\De(t)$ given by Lemma~\ref{lem:expansion-unbalanced}, one obtains the expansion   
$$\gBh^\De_\mS(t)=_{t\to{\rho_\De}^p} O\left(\sqrt{1-\frac{t}{{\rho_\De}^p}} \, \gB^\De_\mS(t)\right)$$
in a domain dented at  $t= {\rho_\De}^p$. By the classical transfer theorems, between singularity types and coefficients asymptotics, one obtains 
$$[t^n]\gBh^\De_\mS(t)=_{n\to \infty}O\left(\frac{[t^n]\gB^\De_\mS(t)}{\sqrt{n}}\right).$$
This completes the proof of Equation~\Ref{eq:A-to-dissections} and Theorem~\ref{thm:nb-Dangular}.
\findem


\section{Limit laws}\label{section:limit-laws}
In this section, we study the limit law of the number of structuring edges in $\De$-angular dissections. Our method is based on generating function manipulation allied with the so-called \textit{method of moments} (see for instance \cite{Billingsley:probability}). We first explain our method in Subsection~\ref{subsection:preliminaries-limit-laws} and then apply it in Subsection~\ref{subsection:nb-structuring} in order to determine the limit law of the number of structuring edges of $\De$-angular dissections.

\subsection{A guideline for obtaining the limit laws}\label{subsection:preliminaries-limit-laws}
We start with some notations. The expectation of a random variable
$\rX$ is denoted by $\EE{\rX}$. The $r$th  \emph{moment} of $\rX$ is
$\Ee{\rX^r}$, and the $r$th  \emph{factorial moment} is
$\EE{(\rX)_r}:=\EE{\rX(\rX-1)\cdots(\rX-r+1)}$. For a sequence
$(\rX_n)_{n\in\NN}$ of real random variables, we denote by $\rX_n\td
\rX$ the fact that $\rX_n$ converges to $\rX$ in distribution. We use the following
sufficient condition for convergence in distribution.

\begin{lemma}[Method of moments]\label{lem:method-moments}
Let $\rX_n$, $n\in\NN$ and $\rX$ be real random variables satisfying:
\begin{itemize}
\item[(A)] there exists $R>0$ such that  $\displaystyle \frac{R^r\, \EE{\rX^r}}{r!}~\to_{r\to\infty}~ 0$,
\item[(B)] for all $r\in \NN$, $\displaystyle \EE{\rX_n^r} ~\to_{n\to\infty}~ \EE{\rX^r}$.
\end{itemize}
Then $\rX_n\td \rX$.
\end{lemma}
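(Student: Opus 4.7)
This is the classical Fr\'echet--Shohat theorem, so the plan is to argue in three standard steps: (1) show that condition (A) forces the distribution of $\rX$ to be uniquely determined by its moment sequence, (2) show that the sequence $(\rX_n)$ is tight, and (3) show that every subsequential weak limit of $(\rX_n)$ has the same moments as $\rX$, hence equals $\rX$ in distribution by (1), which forces $\rX_n \td \rX$.

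For step (1), I would translate (A) into a standard uniqueness criterion for the moment problem. From $R^r\EE{\rX^r}/r! \to 0$ one deduces that $\EE{|\rX|^r} \le C r!/R^r$ for some constant $C$ and all $r$; in particular the series $\sum_r \EE{\rX^r}t^r/r!$ has radius of convergence at least $R$, so $\EE{e^{t\rX}}$ is finite on a neighborhood of $0$ and coincides there with the exponential generating function of the moments. It is classical (by Carleman's condition, or equivalently by analytic continuation of the characteristic function $\EE{e^{it\rX}}$ in a strip) that this forces the law of $\rX$ to be the unique probability measure on $\RR$ with that moment sequence.

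For step (2), condition (B) applied to $r=2$ yields $\sup_n \EE{\rX_n^2}<\infty$, so by Markov's inequality the family $(\rX_n)$ is tight. Prokhorov's theorem then says every subsequence of $(\rX_n)$ has a further subsequence converging weakly to some random variable $\rY$. For step (3), I must verify $\EE{\rY^r}=\EE{\rX^r}$ for every $r$. The subtle point is that weak convergence alone does not imply convergence of moments; I will use that by (B), $\sup_n \EE{\rX_n^{r+1}}<\infty$, so the family $\{\rX_{n_k}^r\}$ along the convergent subsequence is uniformly integrable, and hence $\EE{\rX_{n_k}^r}\to \EE{\rY^r}$. Combined with (B), this gives $\EE{\rY^r}=\EE{\rX^r}$ for every $r$. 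By step (1), $\rY$ and $\rX$ have the same distribution, so every subsequential weak limit of $(\rX_n)$ equals $\rX$ in law, which is equivalent to $\rX_n \td \rX$.

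The only genuinely delicate step is (1): verifying that the growth condition (A) is strong enough to force determinacy of the moment problem. The rest of the argument (tightness, uniform integrability upgrading weak convergence to moment convergence, and the subsequence principle) is routine once this uniqueness is in hand.
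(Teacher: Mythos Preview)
Your outline is the standard Fr\'echet--Shohat argument and it is correct. Note, however, that the paper does not actually prove this lemma: it states it as a classical tool and refers the reader to Billingsley's textbook, so there is no ``paper's own proof'' to compare against. Your write-up therefore supplies what the paper merely cites.

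Two small technical remarks on your outline, neither of which is a real gap. First, condition~(A) is stated for the raw moments $\EE{\rX^r}$, not the absolute moments $\EE{|\rX|^r}$; to pass from the former to the latter (as you do when invoking the moment generating function or Carleman), use the elementary bound $|x|^{2k+1}\le\tfrac12(x^{2k}+x^{2k+2})$, so that control of the even moments (which are absolute) suffices. Second, in step~(3) the uniform integrability of $\{\rX_{n_k}^r\}$ should be deduced from the boundedness of an \emph{even} higher moment, say $\sup_n\EE{\rX_n^{2m}}<\infty$ with $2m>r$, rather than the $(r\!+\!1)$st moment, which for odd $r\!+\!1$ carries no sign information. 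With these cosmetic fixes the argument is complete.
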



Lemma~\ref{lem:method-moments}, leads to study the moments of random
variables. These can be accessed through generating functions in the
following way.  Suppose that $\mC$ is a \emph{combinatorial class},
that is, a set supplied with a \emph{size function}
$|\cdot|:\mC\to \NN$ such that for all $n\in\NN$ the set $\mC(n)$ of
objects of size $n$ is finite. We denote by $\rU(\mC,n)$ the random
variable  corresponding to the value of a parameter $U:\mC \to \RR$
for an element $C\in\mC$ chosen uniformly at random among those of
size $n$ (we suppose here that $\mC(n)\neq\emptyset$).
If the parameter $U$ is integer valued, then the probability
$\prob{\rU(\mC,n)=k}$ is $\displaystyle \frac{[z^n u^k]\gC(u,z)}{[z^n]\gC(1,z)}$,
where 
$$\gC(u,z)\equiv\sum_{C\in\mC}z^{|C|}u^{U(C)}$$ 
is the bivariate
generating function \emph{associated to the parameter} $U$. Thus,
the factorial moments of $\rU(\mC,n)$ can be expressed in terms of
the generating function $\gC(u,z)$:
\begin{equation}\label{eq:factorial-moments}
\EE{\pare{\rU(\mC,n)}_r}~=~\frac{\sum_{k}k(k-1)\cdots(k-r+1)[z^nu^k]\gC(u,z)}{[z^n]\gC(1,z)} ~=~\frac{[z^{n}]\pdu{r}{\gC(u,z)}}{[z^{n}]\gC(1,z)}.
\end{equation}
One can then obtain the moments of $\rU(\mC,n)$ from its factorial moments, but in our case we will consider rescaled random variables for which the following lemma applies.

\begin{lemma}\label{lemma:method-moments2}
Let $\mC$ be a combinatorial class and let $\rU(\mC,n)$ and
$\gC(u,z)$ be respectively the random variable and generating
function associated to a parameter $U:\mC\to \NN$. Let also
$\theta:\NN\to\NN$ be a function converging to $+\infty$.
If a random variable $\rX$ satisfies Condition (A) 
in Lemma~\ref{lem:method-moments} and 
\begin{itemize}
\item[(B')] for all $r\in \NN$, $\displaystyle \frac{[z^{n}]\pdu{r}{\gC(u,z)}}{\theta(n)^r[z^{n}]\gC(1,z)}~ \to_{n\to\infty}~ \EE{\rX^r}$,
\end{itemize}
then the rescaled random variables $\displaystyle \rX_n\equiv\frac{\rU(\mC,n)}{\theta(n)}$ converges to $\rX$ in  distribution.
\end{lemma}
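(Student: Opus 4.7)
The plan is to deduce this from the method of moments (Lemma~\ref{lem:method-moments}) applied to the rescaled variable $\rX_n$ and the target $\rX$. Hypothesis~(A) on $\rX$ is granted, so the only thing left to verify is Condition~(B) of that lemma, namely $\EE{\rX_n^r}\to\EE{\rX^r}$ for every $r\in\NN$. Hypothesis~(B'), combined with the factorial-moment formula~\Ref{eq:factorial-moments}, already provides the convergence of the rescaled \emph{factorial} moments $\EE{(\rU(\mC,n))_r}/\theta(n)^r\to\EE{\rX^r}$, so the heart of the argument is to pass from factorial moments to ordinary ones.

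The bridge is the classical Stirling identity $y^{r}=\sum_{k=0}^{r}S(r,k)\,(y)_k$, where $S(r,k)$ denotes the Stirling number of the second kind. Substituting $y=\rU(\mC,n)$, dividing through by $\theta(n)^r$, and using linearity of expectation yields
\[
\EE{\rX_n^r}\;=\;\sum_{k=0}^{r}S(r,k)\,\theta(n)^{\,k-r}\cdot\frac{\EE{(\rU(\mC,n))_k}}{\theta(n)^{k}}.
\]
By~\Ref{eq:factorial-moments} and hypothesis~(B'), for each fixed $k\leq r$ the normalized factorial moment $\EE{(\rU(\mC,n))_k}/\theta(n)^k$ converges as $n\to\infty$ to the finite number $\EE{\rX^k}$ (finiteness of every moment of $\rX$ follows from~(A)). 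Since $\theta(n)\to+\infty$, the prefactor $\theta(n)^{k-r}$ tends to $0$ for each $k<r$, while for $k=r$ it equals $1$ and $S(r,r)=1$; hence only the top term survives and $\EE{\rX_n^r}\to \EE{\rX^r}$, which is exactly Condition~(B). An appeal to Lemma~\ref{lem:method-moments} then gives $\rX_n\td\rX$.

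I do not expect a genuine obstacle: once the Stirling expansion is invoked, the argument is routine bookkeeping and the hypotheses are tailored to fit the method of moments. The only point worth double-checking is that each numerical sequence $\EE{(\rU(\mC,n))_k}/\theta(n)^k$, being convergent, is bounded, so that multiplying it by $\theta(n)^{k-r}\to 0$ indeed kills the sub-leading terms; this is immediate.
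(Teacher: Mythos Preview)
Your proof is correct and follows essentially the same route as the paper's: both deduce Condition~(B) from~(B') by relating factorial moments to ordinary moments and using $\theta(n)\to\infty$ to kill the lower-order terms. The only cosmetic difference is that the paper writes $(\rU)_r/\theta^r=\rU^r/\theta^r+o(\sum_{k<r}\rU^k/\theta^k)$ and argues by induction on $r$, whereas you invoke the explicit Stirling identity $y^r=\sum_k S(r,k)(y)_k$; the content is the same.
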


\begin{proof}
We only need to prove that (B') implies (B). Using~\Ref{eq:factorial-moments} and the fact that $\theta$ tends to infinity gives for all $r\geq 0$,
$$\frac{[z^{n}]\pdu{r}{\gC(u,z)}}{\theta(n)^r[z^{n}]\gC(1,z)}~=~\EE{\frac{\pare{\rU(\mC,n)}_r}{\theta(n)^r}}~=~\EE{\frac{\rU(\mC,n)^r}{\theta(n)^r}}+o\pare{\sum_{k<r}\EE{\frac{\rU(\mC,n)^k}{\theta(n)^k}}}.$$
Given Condition (B'), a simple induction on $r$ shows that  $\displaystyle \frac{\rU(\mC,n)^r}{\theta(n)^r}$ has a finite limit for all $r\geq 0$. Thus,
$\displaystyle
\frac{[z^{n}]\pdu{r}{\gC(u,z)}}{\theta(n)^r[z^{n}]\gC(1,z)}\to_{n\to
\infty} \EE{\rX_n^r}$. \end{proof}

We will also use the following lemma.
\begin{lemma}\label{lem:law-subclass}
Let $\mC'$ be a subclass of the combinatorial class $\mC$ such that
$|\mC(n)|\sim |\mC'(n)|$, and let $X:\mC\to \RR$ be a parameter.
Then the sequence of random variables $\pare{\rX(\mC',n)}_{n\in\NN}$
converge in distribution if and only if the sequence
$\pare{\rX(\mC,n)}_{n\in\NN}$ does. In this case they have the same
limit.
\end{lemma}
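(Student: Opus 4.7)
The strategy is to establish that the laws of $\rX(\mC,n)$ and $\rX(\mC',n)$ are asymptotically indistinguishable as probability measures on $\RR$; convergence in distribution of either sequence will then automatically transfer to the other, with the same limit. The natural tool is the total variation distance between these two laws. Since $\mC'\subseteq\mC$, one can split $\mC(n)=\mC'(n)\sqcup(\mC(n)\setminus\mC'(n))$ and note that the uniform probability on $\mC(n)$ conditioned to lie in $\mC'(n)$ is exactly the uniform probability on $\mC'(n)$.

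From this decomposition, for any Borel set $B\subseteq\RR$,
$$\prob{\rX(\mC,n)\in B}=\frac{|\mC'(n)|}{|\mC(n)|}\,\prob{\rX(\mC',n)\in B}+\frac{|\{C\in\mC(n)\setminus\mC'(n):X(C)\in B\}|}{|\mC(n)|}.$$
Setting $\eps_n:=1-|\mC'(n)|/|\mC(n)|$, the hypothesis $|\mC(n)|\sim|\mC'(n)|$ gives $\eps_n\to 0$. The second term above is non-negative and bounded by $\eps_n$, while the first equals $(1-\eps_n)\prob{\rX(\mC',n)\in B}$. Therefore, uniformly in $B$,
$$\bigl|\prob{\rX(\mC,n)\in B}-\prob{\rX(\mC',n)\in B}\bigr|\leq 2\eps_n\underset{n\to\infty}{\longrightarrow}0,$$
so the total variation distance between the two laws tends to zero.

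To finish, I would specialise this bound to sets of the form $B=(-\infty,x]$: the cumulative distribution functions of $\rX(\mC,n)$ and $\rX(\mC',n)$ differ by at most $2\eps_n$ at every real $x$, so they share the same pointwise limit at every continuity point of a hypothetical limit law. By the Portmanteau characterisation of convergence in distribution, the two sequences converge (or diverge) together, and when they converge, they do so to the same limit random variable $\rX$. There is no genuine obstacle in this argument; the only point meriting a word is the legitimacy of the splitting, which rests squarely on the inclusion $\mC'\subseteq\mC$ and on the asymptotic equivalence of their cardinalities.
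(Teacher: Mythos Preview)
Your argument is correct and is essentially the same as the paper's: both exploit that the uniform law on $\mC(n)$ conditioned on $\mC'(n)$ is the uniform law on $\mC'(n)$, yielding $\bigl|\prob{\rX(\mC,n)\in B}-\prob{\rX(\mC',n)\in B}\bigr|\leq 1-|\mC'(n)|/|\mC(n)|\to 0$. The paper phrases this via an explicit coupling of the two uniform random elements, whereas you compute the total variation bound directly from the decomposition $\mC(n)=\mC'(n)\sqcup(\mC(n)\setminus\mC'(n))$; these are two presentations of the same estimate (your constant $2\eps_n$ can in fact be sharpened to $\eps_n$, matching the paper exactly).
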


\begin{proof} We need to prove that, for all $x\in \RR$, $\big|\prob{\rX(\mC',n)\leq x}-\prob{\rX(\mC,n)\leq x}\big|~\to_{n\to\infty} 0.$
To this end, we consider a coupling of the variables
$\rX(\mC,n)$ and $\rX(\mC',n)$ obtained in the following way. Let
$\mathfrak{C}$ and $\mathfrak{C'}$ be independent random variables
whose value is an element  chosen uniformly at random in $\mC(n)$
and $\mC'(n)$, respectively. The random variable $\mathfrak{C''}$
whose value is $\mathfrak{C}$ if $\mathfrak{C}\in \mC'(n)$ and
$\mathfrak{C'}$ otherwise is uniformly random in $\mC'(n)$. Hence,
the random variable $\rX(\mC,n)$ and $\rX(\mC',n)$ have the same
distribution as $X(\mathfrak{C})$ and $X(\mathfrak{C}'')$
respectively. Thus, for all $x\in \RR$,
$$ \left|\prob{\rX(\mC',n)\leq x}-\prob{\rX(\mC,n)\leq x}\right| ~\leq~ \prob{\mathfrak{C}''\neq \mathfrak{C}} ~\leq~ \prob{\mathfrak{C}\notin \mC'(n)}~\leq~1- \frac{|\mC'(n)|}{|\mC(n)|}~\to~ 0.$$
\end{proof}

\subsection{Number of structuring edges in $\De$-angular dissections}\label{subsection:nb-structuring}
We are now ready to study the limit law of the number of structuring edges in $\De$-angular dissections. Recall that for a map $M$ in $\mM^\De_\mS$, an edge is said \emph{structuring} if either it does not separate the surface $\mS$ or if it separates $\mS$ into two parts, none of which is homeomorphic to a disc. For a map $A\in\mA^\De_\mS$, we call \emph{structuring} the edges of the submap of $A$ obtained by recursively deleting all leaves. These are the edges whose deletion either does not disconnect the map or disconnect it in two parts, none of which is reduced to a tree.  With this definition, if the maps $M\in\mM^\De_\mS$ and $A\in\mA^\De_\mS$ are dual of each other, then their structuring edges correspond by duality. We denote by $U$ the parameter corresponding to the number of structuring edges so that for $\mC\in \{\mA^\De_\mS, \mM^\De_\mS,\mD^\De_\mS\}$, the random variable $\rU(\mC,n)$ gives the number of structuring edges of a map  $C\in\mC$ chosen uniformly at random among those of size $n$. Recall that the \emph{size} of maps in $\mA^\De_\mS$ is the number of leaves, while the \emph{size} of maps in $\mM^\De_\mS$ is the number of vertices. Hence, the bivariate generating functions $\gA^\De_\mS(u,z)$, $\gM^\De_\mS(u,z)$ and $\gD^\De_\mS(u,z)$ associated to the parameter $\rU$ for the classes $\mA^\De_\mS$, $\mM^\De_\mS$ and $\mD^\De_\mS$ satisfy: $\gA^\De_\mS(z)=\gA^\De_\mS(1,z)$, $\gM^\De_\mS(z)=\gM^\De_\mS(1,z)$ and $\gD^\De_\mS(z)=\gD^\De_\mS(1,z)$. \\

Recall that the size of maps in $\mC_\mS^\De\in \{\mA^\De_\mS,
\mM^\De_\mS,\mD^\De_\mS\}$ is congruent to $2\chi(\mS)$ modulo $p=\gcd(\de-2,~ \de\in \De)$. 
Moreover, it will be shown shortly that the average number of structuring edges of maps  $\mA^\De_\mS(n)$ is $O(\sqrt{n})$. 
This leads us to consider the following rescaled random variables.
\begin{Def}
For any set $\De\subseteq \NN^{\geq 3}$ we define the rescaled
random variables $\rX(\mC_\mS^\De,n)$, for the class $\mC_\mS^\De\in
\{\mA^\De_\mS, \mM^\De_\mS,\mD^\De_\mS\}$ by
\begin{equation}\label{eq:Xn}
 \rX(\mC^\De_\mS,n)~=~\frac{\rU(\mC_\mS^\De,np+2\chi(\mS))}{\sqrt{np+2\chi(\mS)}},
\end{equation}
where $p=\gcd(\de-2,~ \de\in \De)$.
\end{Def}

We also define some continuous random variables $\rX_k$ as follows.

\begin{Def}\label{Def:Xlim}
For all non-negative integer $k$, we denote by $\rX_{k}$ the real random variable with probability density function
\begin{equation}\label{eq:Xlim}
g_k(t)=\frac{2\, t^{3k}\,
e^{-t^2}}{\Gamma\left(\frac{1+3k}{2}\right)}~ \ind,
\end{equation}
where $\ind$ is the characteristic function of the set $[0,\infty[$.
\end{Def}

\begin{thm}\label{thm:law-De-angular}
Let $\mS$ be any surface with boundary distinct from the disc, let $\De\subseteq \NN^{\geq 3}$ and let $p=\gcd(\de-2,~ \de\in \De)$. 
The sequences of random variables $\pare{\rX(\mM^\De_\mS,n)}_{n\in\NN}$ and $\pare{\rX(\mD^\De_\mS,n)}_{n\in\NN}$ corresponding respectively to
the rescaled number of structuring edges in $\De$-angular maps and
dissections both converge in distribution to the random variable $\rX_\mS^\De\equiv \parfrac{\ga_{\De}}{\rho_\De}\, \rX_{-\chi(\mS)}$ where $\ga_{\De}$ and $\rho_\De$ are the constants defined by~\Ref{eq:constants-trees}.
\end{thm}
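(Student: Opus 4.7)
The plan is to apply the method of moments (Lemma~\ref{lemma:method-moments2}) to the class $\mA^\De_\mS$, and then transfer the conclusion to $\mM^\De_\mS$ and $\mD^\De_\mS$ via Lemma~\ref{lem:law-subclass}, whose hypotheses are satisfied by Theorem~\ref{thm:maps-to-dissections} together with the duality-preserving definition of structuring edges.

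I first set up the bivariate generating function with $u$ marking structuring edges. Under the bijection $\Phi$ of Lemma~\ref{lem:decomposition-D}, the structuring edges of $A\in\mA^\De_\mS$ are the edges of the 2-core of $A$, which corresponds to the scheme $S$ with the root-leaf and its unique incident edge pruned: the doubly-rooted tree attached to the root-edge is entirely destroyed by recursive leaf deletion (the cascade triggered by the root-leaf propagates through the whole root-to-marked path because every internal path vertex, of degree $\de\in\De\subseteq\NN^{\geq 3}$, first loses its $\de-2$ subtree branches and then becomes degree $1$), whereas the doubly-rooted tree at each of the remaining $e_S-1$ scheme edges contributes exactly its root-to-marked path to the 2-core. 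Decomposing a doubly-rooted tree along this path (an internal vertex of degree $\de$ contributing a factor $(\de-1)\gT_\De(z)^{\de-2}$ for the two sides of subtrees) yields
\[
\gT_\De'(u,z)=\frac{u}{1-u\,W(z)}, \qquad W(z)=1-\frac{1}{\gT_\De'(z)},
\]
so that after the change of variables $t=z^p$ of Lemma~\ref{lem:periodicity2},
\[
\gB^\De_\mS(u,t)=\sum_{S\text{ scheme}}\gZ_\De(t)\,\gZ_\De(u,t)^{e_S-1}\prod_{j=1}^{v_S}\gY_{\De,d_j-1}(t),
\]
with $\gZ_\De(u,t)=u/(1-u\,w(t))$ and $w(t)=1-1/\gZ_\De(t)$.

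I then perform the singular analysis of $\pdu{r}{\gB^\De_\mS(u,t)}$ at $t=\rho_\De^p$. The identity $F^{(k)}(1)=k!\,w^{k-1}\gZ_\De(t)^{k+1}$ for $F(u)=u/(1-uw(t))$, combined with Leibniz's rule, gives
\[
\pdu{r}{\gZ_\De(u,t)^{e-1}}\sim\frac{(r+e-2)!}{(e-2)!}\,\be_0^{r+e-1}\left(1-\frac{t}{\rho_\De^p}\right)^{-(r+e-1)/2}
\]
with $\be_0=\ga_\De\sqrt{p}/(2\rho_\De)$ the leading singular coefficient of $\gZ_\De$ from Equation~\Ref{eq:expansionX}. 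Non-cubic schemes have $e_S<2-3\chi(\mS)=e$ and hence produce strictly weaker singularities, so the dominant contribution comes from the $a(\mS)$ cubic schemes (each with $d_j=3$, and $\gY_{\De,2}(t)\to\kappa_2=(\rho_\De/\ga_\De)^2$ by Lemma~\ref{lem:singularity}). Applying the transfer theorem of~\cite{FlajoletSedgewig:analytic-combinatorics}, dividing by $[t^n]\gB^\De_\mS(1,t)$ from Theorem~\ref{thm:nb-Dangular} and by $(np+2\chi(\mS))^{r/2}$, I find that all extraneous powers of $n$, $p$ and $2$ cancel and
\[
\EE{\rX(\mA^\De_\mS,n)^r}\longrightarrow\frac{(r+e-2)!}{(e-2)!}\cdot\frac{\Gamma(1-3\chi(\mS)/2)}{\Gamma((r+e)/2)}\cdot\left(\frac{\ga_\De}{2\rho_\De}\right)^{r}.
\]
Two applications of the Legendre duplication formula $\Gamma(2z)=\frac{2^{2z-1}}{\sqrt{\pi}}\Gamma(z)\Gamma(z+1/2)$, with $2z=r+1-3\chi(\mS)$ and with $2z=1-3\chi(\mS)$, rewrite $(r+e-2)!/(e-2)!=\Gamma(r+1-3\chi(\mS))/\Gamma(1-3\chi(\mS))$ and collapse the right-hand side to $(\ga_\De/\rho_\De)^{r}\Gamma((r+1-3\chi(\mS))/2)/\Gamma((1-3\chi(\mS))/2)=\EE{(\rX_\mS^\De)^r}$, verifying condition~(B'). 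Condition~(A) is routine since $g_{-\chi(\mS)}$ dies like $e^{-t^2}$, so $\Gamma((r+1-3\chi(\mS))/2)/r!\to 0$ faster than any geometric sequence by Stirling's formula.

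The main obstacle is getting the exponent $e_S-1$ exactly right. A naive $u$-weight on every doubly-rooted tree (yielding $\gT_\De'(u,z)^{e_S}$) would instead produce $(r+e-1)!/(e-1)!=\Gamma(r+2-3\chi(\mS))/\Gamma(2-3\chi(\mS))$, and the Legendre collapse would then yield $\Gamma((r+3-3\chi(\mS))/2)$ rather than $\Gamma((r+1-3\chi(\mS))/2)$, contradicting the density $g_{-\chi(\mS)}$. Justifying this off-by-one amounts to the leaf-deletion-cascade analysis sketched above: the cascade propagates through the entirety of the root-edge's doubly-rooted tree but halts at the adjacent non-root scheme vertex (still of degree $\geq 2$ after losing one incident edge), so exactly one doubly-rooted tree per map is destroyed.
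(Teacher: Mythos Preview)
Your proof is correct and follows essentially the same route as the paper: reduce to $\mA^\De_\mS$ via Lemma~\ref{lem:law-subclass} and Theorem~\ref{thm:maps-to-dissections}, write the bivariate generating function as $\gZ_\De(t)\,\gZ_\De(u,t)^{e_S-1}\prod_j \gY_{\De,d_j-1}(t)$ summed over schemes, differentiate $r$ times, extract the cubic-scheme contribution via singular analysis and transfer, and collapse the factorials with the duplication formula to identify the moments of $\rX_\mS^\De$. The paper simply asserts the $e_S-1$ exponent (``the structuring edges are the spine edges of the trees at the non-root scheme edges'') while you justify it by the leaf-deletion cascade, but the content and the computation are the same.
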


In the case $\De=3$, one has $p=1$,  $\rho_\De=1/4$, $\ga_\De=1/2$.
Hence, by Theorem~\ref{thm:law-De-angular} the rescaled number of
structuring edges of uniformly random simplicial decompositions
$\rU(\mD_\mS,n)/\sqrt{n}$ converges to the random variable
$2\rX_{-\chi(\mS)}$ whose probability density function is
$$f(t)~=~\frac{1}{2}g_{-\chi(\mS)}\!\parfrac{t}{2}~=~\frac{1}{\Gamma\!\parfrac{1-3\chi(\mS)}{2}} \parfrac{t}{2}^{-3\chi(\mS)}\,  e^{-t^2/4}~ \ind,$$
The rest of this section is devoted to the proof of Theorem~\ref{thm:law-De-angular}.\\

By duality, the classes $\mM_\mS^\De$ and $\mD_\mS^\De$ can be considered as subclasses of $\mA_\mS^\De$. Moreover, by Lemma~\ref{thm:maps-to-dissections}, $|\mM_\mS^\De(n)|\sim |\mD_\mS^\De(n)|\sim  |\mA_\mS^\De(n)|$. Thus, by Lemma~\ref{lem:law-subclass} it is sufficient to prove that the rescaled random variable $\rX_n(\mA_\mS^\De,n)$ converge to $\rX_\mS^\De$ in distribution.\\

In order to apply Lemma~\ref{lemma:method-moments2}, we first check that the variable $\rX_\mS^\De$ satisfy condition (A).

\begin{lemma}\label{lem:X-moments}
The  random variable  $\rX_\mS^\De$ satisfies condition (A) and its
$r$th moment is
\begin{equation}\label{eq:X-moment}
\displaystyle \EE{\left(\rX_\mS^\De\right)^r}~=~\parfrac{\ga_{\De}}{\rho_\De}^r\, \frac{\Gamma\!\parfrac{r+1-3\chi(\mS)}{2}}{\Gamma\!\parfrac{1-3\chi(\mS)}{2}}.
\end{equation}
\end{lemma}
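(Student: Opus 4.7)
The plan is straightforward: this lemma reduces to a direct computation with the density of $\rX_{-\chi(\mS)}$ followed by a Stirling-type growth estimate. I would not try to prove it by some probabilistic identification; the density given in Definition~\ref{Def:Xlim} is tailored precisely so that the moments come out to Gamma-function ratios.

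First I would compute the $r$th moment of $\rX_k$ from its density $g_k(t)=2t^{3k}e^{-t^2}/\Gamma((1+3k)/2)$. Since $\rX_\mS^\De = (\ga_\De/\rho_\De)\,\rX_{-\chi(\mS)}$, linearity of the $r$th moment under rescaling gives $\EE{(\rX_\mS^\De)^r}=(\ga_\De/\rho_\De)^r\,\EE{\rX_{-\chi(\mS)}^r}$, so it is enough to evaluate
\begin{equation*}
\EE{\rX_k^r}~=~\frac{2}{\Gamma((1+3k)/2)}\int_0^\infty t^{r+3k}e^{-t^2}\,dt.
\end{equation*}
The substitution $u=t^2$ turns the integral into $\tfrac{1}{2}\Gamma((r+3k+1)/2)$, yielding $\EE{\rX_k^r}=\Gamma((r+3k+1)/2)/\Gamma((3k+1)/2)$. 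Setting $k=-\chi(\mS)$ and multiplying by $(\ga_\De/\rho_\De)^r$ gives exactly the claimed formula~\Ref{eq:X-moment}.

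It remains to check Condition (A): there exists $R>0$ with $R^r\EE{(\rX_\mS^\De)^r}/r!\to 0$. Setting $c=1-3\chi(\mS)\geq 1$ and absorbing $(\ga_\De/\rho_\De)^r$ into the constant $R$, this reduces to showing $R^r\,\Gamma((r+c)/2)/r!\to 0$ for every fixed $R>0$. By Stirling's formula, $\Gamma((r+c)/2)$ grows like $(r/(2e))^{r/2}$ up to polynomial factors, whereas $r!$ grows like $(r/e)^r$; hence the ratio $\Gamma((r+c)/2)/r!$ decays as $(e/r)^{r/2}$ up to sub-exponential factors, and multiplying by $R^r$ still gives a sequence tending to $0$. (Equivalently, one can apply the ratio test: the ratio of consecutive terms tends to $0$, so Condition~(A) holds for any $R>0$.)

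There is no real obstacle here; the only thing to watch out for is bookkeeping of the constant shift inside the Gamma function when substituting $k=-\chi(\mS)$, and checking that $(3k+1)/2=(1-3\chi(\mS))/2$ is positive (which follows from $\chi(\mS)\leq 0$, since $\mS$ is distinct from the disc in the context of Theorem~\ref{thm:law-De-angular}, so all Gamma values appearing in the statement are well defined).
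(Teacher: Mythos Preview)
Your proof is correct and follows essentially the same route as the paper: the moment computation via the substitution $u=t^2$ is identical, and the only difference is in checking Condition~(A), where the paper uses the elementary bound $\Gamma((r+1+3k)/2)\leq r!$ for large $r$ (which already suffices for some $R>0$), while you invoke Stirling to get the slightly stronger conclusion that any $R>0$ works.
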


\begin{proof} By definition of $\rX_\mS^\De$, 
$\EE{\left(\rX_\mS^\De\right)^r}=\parfrac{\ga_{\De}}{\rho_\De}^r\EE{\left(\rX_{-\chi(\mS)}\right)^r}$
where $\rX_k$ is defined by (\ref{eq:Xlim}). The moments of $\rX_k$
can be calculated by making the change of variable $u=t^2$:
$$\EE{{\rX_k}^r}= \frac{2}{\Gamma\!\parfrac{1+3k}{2}}\int_{0}^{\infty}t^{r+3k}e^{-t^2} dt= \frac{1}{\Gamma\!\parfrac{1+3k}{2}}\int_{0}^{\infty}u^{(r+3k-1)/2}e^{-u} du=\frac{\Gamma\!\parfrac{r+1+3k}{2}}{\Gamma\!\parfrac{1+3k}{2}}.$$
Thus, (\ref{eq:X-moment}) holds. Moreover, since
$\Gamma\!\parfrac{r+1+3k}{2} \leq \Gamma(r+1)=r!$ for $r$ large enough,
Condition (A) holds for any $R$ less than $\parfrac{\rho_\De}{\ga_{\De}\sqrt{p}}$.
\end{proof}

We now study the moments of the random variables $\rU(\mA_\mS^\De,n)$ corresponding to the number of structuring edges. For this purpose, we shall exploit once again the decomposition $\Phi$ of the maps in $\mA_\mS^\De$ (Figure~\ref{fig:extension-Phi}).  This decomposition leads us to consider the \emph{spine edges} of doubly-rooted trees, that is, the edges on the path from the root-leaf to the marked leaf. Indeed, if the image of a map $A\in\mA_\mS^\De$ by the decomposition $\Phi$ is $(S,(\tau_1^\bu,\ldots,\tau_e^\bu),(\tau_1,\ldots,\tau_v))$, where the doubly-rooted trees $\tau_1^\bu,\ldots,\tau_{e-1}^\bu$  correspond to the $e-1$ non-root edges of the scheme $S$, then the structuring edges of the map $A$ are the spine edges of the  doubly-rooted trees $\tau_1^\bu,\ldots,\tau_{e-1}^\bu$. \\

We denote by $V$ be the parameter corresponding to the number of spine edges and by
$$\gT_\De^\bu(u,z)=\sum_{\tau^\bu}u^{V(\tau^\bu)}z^{|\tau^\bu|}$$
the associated bivariate generating function (here the sum is over
all doubly-rooted $\Dee$-valent trees and $|\tau^\bu|$ is the number
of leaves which are neither marked  nor the root-leaf).  The
decomposition of doubly-rooted trees into a sequence of trees with
one leg by the decomposition represented in Figure
\ref{fig:decomposition-trees} shows that
\begin{equation}\label{eq:T(u,z)}
\gT_\De^\bu(u,z)=\frac{u}{1-u \gT_{\De,1}(z)}=\pare{\frac{1}{u}-\gT_{\De,1}(z)}^{-1},
\end{equation}
where $\gT_{\De,1}(z)$ is the generating function of $\Dee$-valent trees with one leg. Moreover, plugging the expression of $\gT_{\De,1}(z)$ given by Equation~\Ref{eq:T1} gives 
\begin{equation}\label{eq:T(u,z)bis}
\gT_\De^\bu(u,z)=\pare{\frac{1}{u}-1+\frac{1}{\gT_\De'(z)}}^{-1}.
\end{equation}
\begin{figure}[ht!]\begin{center} \input{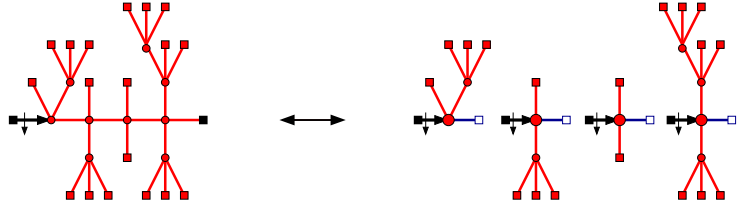}\caption{Decomposition of doubly-rooted trees as a sequence of trees with one leg.}\label{fig:decomposition-trees} \end{center}
\end{figure}

We now translate the bijection induced by $\Phi$ (Lemma
\ref{lem:decomposition-D}) in terms of generating functions. For a
scheme $S$ of  $\mS$, we denote by $\gF_S^\De(u,z)$ the generating
function of maps in $\mA_\mS^\De$ having scheme $S$ counted by
number of leaves and structuring edges. This gives
\begin{equation}\label{eq:A(u,z)}
\gA_\mS^\De(u,z)=\sum_{S \textrm{ scheme}} \gF^\De_S(u,z).
\end{equation}
The bijection  induced by $\Phi$ (Lemma~\ref{lem:decomposition-D}) and the  correspondence between spine edges and structuring edges gives
\begin{equation}\label{eq:F(u,z)}
\gF^\De_S(u,z)~=~z\, \gT'_\De(z) \left(\gT^\bu_\De(u,z)\right)^{e_S-1}\, \prod_{i=1}^{v_S} \gT_{\De,d_i(S)-1}(z),
\end{equation}
for a scheme~$S$ with $e_S$ edges and $v_S$ non-root vertices of respective degrees $d_1(S),\ldots,d_{v_S}(S)$.\\

Combining~\Ref{eq:T(u,z)bis},~\Ref{eq:A(u,z)} and~\Ref{eq:F(u,z)} gives
\begin{equation}\label{eq:A(u,z)bis}
\gA_\mS^\De(u,z)~=~z\, \gT'_\De(z) \sum_{S \textrm{ scheme}}\pare{\frac{1}{u}-1+\frac{1}{\gT_\De'(z)}}^{1-e_S}\, \prod_{i=1}^{v_S} \gT_{\De,d_i(S)-1}(z),
\end{equation}
where the sum is over the schemes $S$ of $\mS$ having  $e_S$ edges and $v_S$ non-root vertices of respective degree $d_1(S),\ldots d_{v_S}(S)$. 
Making the change of variable $t=z^p$  gives
\begin{equation}\label{eq:B(u,t)}
\gB_\mS^\De(u,t)~=~\gZ_\De(t) \sum_{S \textrm{
scheme}}\pare{\frac{1}{u}-1+\frac{1}{\gZ_\De(t)}}^{1-e_S}\,
\prod_{i=1}^{v_S} \gY_{\De,d_i(S)-1}(t),
\end{equation}
where  $\gB_\mS^\De(u,t)$ is defined by $\gA_\mS^\De(u,z)=z^{2\chi(\mS)}\gB_\mS^\De(u,z^p)$ and  $\gZ_\De(t)\equiv pt\gY'_\De(t)+\gY_\De(t)$ satisfies $\gT_\De'(z)=\gZ_\De(z^p)$.\\

By differentiating~\Ref{eq:B(u,t)} with respect to the variable $u$ and keeping only the dominant part in the asymptotic $(t,u)\to ({\rho_\De}^p,1)$ (recall that $\gZ_\De(t)\to_{t\to {\rho_\De}^p} \infty$ by~\Ref{eq:asymptW}) one gets,
$$\pduu{r}\gB_\mS^\De(u,t)=_{t\to {\rho_\De}^p}\gZ_\De(t) \sum_{S \textrm{ scheme}}\frac{(e_S+r-2)!}{u^{2r}(e_S-2)!}
\pare{\frac{1}{u}-1+\frac{1}{\gZ_\De(t)}}^{1-r-e_S}\, \prod_{i=1}^{v_S} \gY_{\De,d_i(S)-1}(t),$$
and finally
$$
\displaystyle \pdu{r}{\gB_\mS^\De(u,t)}\sim_{t\to {\rho_\De}^p}
\sum_{S \textrm{ scheme}}\frac{(e_S+r-2)!}{(e_S-2)!}\,{\gZ_\De(t)}^{e_S+r}\,
\prod_{i=1}^{v_S} \gY_{\De,d_i(S)-1}(t).
$$
The singular expansion of the series $\gY_{\De,\ell}(t)$ and $\gZ_\De(t)$  given by Lemma~\ref{lem:singularity} and Equation~\Ref{eq:asymptW} gives,
$$
\displaystyle \pdu{r}{\gB_\mS^\De(u,t)}\!\!\sim_{t\to {\rho_\De}^p}
\!\! \sum_{S \textrm{ scheme}}\!\!
\frac{(e_S+r-2)!}{(e_S-2)!}\parfrac{\ga_\De\sqrt{p}}{2\rho_\De}^{e_S+r}\!
\pare{1-\frac{t}{{\rho_\De}^p}}^{-\frac{e_S+r}{2}}\prod_{i=1}^{v_S}
\ka_{d_i(S)-1}.
$$
Since the maximum number of edges $e_S$ of a scheme
$S$ of $\mS$ is $2-3\chi(\mS)$, with equality only for the $a(\mS)$
cubic schemes, and since that cubic schemes have $v_S=1-2\chi(\mS)$ non-root vertices, one gets
\begin{equation}\label{eq:asympt-bivariate}
\displaystyle\! \pdu{r}{\gB_\mS^\De(u,t)}\!\!\!\sim_{t\to
{\rho_\De}^p}  a(\mS)\,{\ka_2}^{1-2\chi(\mS)}\, \frac{(r-3\chi(\mS))!}{(-3\chi(\mS))!}
\parfrac{\ga_\De\sqrt{p}}{2\rho_\De}^{r+2-3\chi(\mS)}\!
\pare{1-\frac{t}{{\rho_\De}^p}}^{-\frac{r+2-3\chi(\mS)}{2}}\!\!\!.\!
\end{equation}

The generating function $\pdu{r}{\gB_\mS^\De(u,t)}$ is analytic in a
domain dented at $t=\rho_\De^{p}$. Hence, the asymptotic expansion~\Ref{eq:asympt-bivariate} implies
$$
[t^n]\pdu{r}{\gB_\mS^\De(u,t)}\sim_{n\to \infty} a(\mS)\, \ka_2^{1-2\chi(\mS)}\,\frac{(r-3\chi(\mS))!}{(-3\chi(\mS))!}\parfrac{\ga_\De\sqrt{p}}{2\rho_\De}^{r+2-3\chi(\mS)}\frac{n^{\frac{r-3\chi(\mS)}{2}}{\rho_\De}^{-np}}{\Gamma(\frac{r+2-3\chi(\mS)}{2})}.
$$
Using $\ka_2=\parfrac{\rho_\De}{\ga_\De}^2$ and the asymptotic of $[t^n]\gB_\mS^\De(t)$ given by~\Ref{eq:asymptotic-B} one gets
$$\frac{[t^n]\pdu{r}{\gB_\mS^\De(u,t)}}{(np+2\chi(\mS))^{r/2}[t^n]\gB_\mS^\De(t)} \to_{n\to \infty} \parfrac{\ga_\De}{2\rho_\De}^{r}\frac{(r-3\chi(\mS))!}{(-3\chi(\mS))!}\frac{\Gamma\!\parfrac{2-3\chi(\mS)}{2}}{\Gamma\!\parfrac{r+2-3\chi(\mS)}{2}}.$$
The right-hand-side of this equation can be simplified by writing
$\displaystyle\frac{(r-3\chi(\mS))!}{(-3\chi(\mS))!}=
\frac{\Gamma(r+1-3\chi(\mS))}{\Gamma(1-3\chi(\mS))}$ and using \emph{Gauss
duplication formula}, which states that for all $x$, $\displaystyle \frac{\Gamma(x)}{\Gamma((x+1)/2)}=\frac{2^x}{2\sqrt{\pi}}\,
\Gamma(x/2)$. This gives
$$
\frac{[t^n]\pdu{r}{\gB_\mS^\De(u,t)}}{(np+2\chi(\mS))^{r/2}[t^n]\gB_\mS^\De(t)}
\to_{n\to \infty} \parfrac{\ga_\De}{\rho_\De}^{r}
\frac{\Gamma\!\parfrac{r+1-3\chi(\mS)}{2}}{\Gamma\!\parfrac{1-3\chi(\mS)}{2}}.
$$
Comparing this expression with the $r$th moment of $\rX_\mS^\De$ (Lemma~\ref{lem:X-moments}) gives
$$\frac{[z^{np+2\chi(\mS)}]\pdu{r}{\gA_\mS^\De(u,z)}}{\pare{np+2\chi(\mS)}^{r/2}[z^{np+2\chi(\mS)}]\gA_\mS^\De(1,z)}=\frac{[t^{n}]\pdu{r}{\gB_\mS^\De(u,t)}}{\pare{np+2\chi(\mS)}^{r/2}[t^{n}]\gB_\mS^\De(t)} ~\to_{n\to\infty}~ \EE{\pare{\rX_\mS^\De}^r},
$$
which is exactly Condition (B') for the convergence of $\rX(\mC^\De_\mS,n)\equiv\frac{\rU(\mC_\mS^\De,np+2\chi(\mS))}{\sqrt{np+2\chi(\mS)}}$ to $\rX^\De_\mS$. Theorem~\ref{thm:law-De-angular} then follows from Lemmas~\ref{lemma:method-moments2} and~\ref{lem:law-subclass}. 
\findem


\newpage

\appendix

\section{The smooth implicit-function schema and the counting of trees}\label{appendix:asymptotic-method}
In this section we prove Lemma~\ref{lem:singularities-trees-general} by following the methodology of  \cite[Example VII.13]{FlajoletSedgewig:analytic-combinatorics}. The main machinery is provided by a Theorem of Meir and Moon \cite{Meier:asymptotic-method} (which appears as  Theorem  VII.3 of \cite{FlajoletSedgewig:analytic-combinatorics}), on the singular behaviour of generating functions defined by a \emph{smooth implicit-function schema}.

\begin{Def}
Let $\gW(t)$ be a function analytic at 0, with $\gW(0)=0$ and
$[t^n]\gW(t)\geq 0$ for all $n\geq 0$. The function is said to
satisfy a \emph{smooth implicit-function schema} if there exists a
bivariate power series $G(t,w)=\sum_{m,n\geq 0}g_{m,n}t^m w^n$ satisfying $\gW(t) = G(t,\gW(t))$ and the following conditions:
\begin{enumerate}
\item[$(a)$] There exist positive numbers $R,S>0$ such that $G(t,w)$ is analytic in the domain $|t| < R$ and $|w| < S$.
\item[$(b)$] The coefficients of $G$ satisfy $g_{m,n} \geq  0,~ g_{0,0}=0,~ g_{0,1}\neq 1$, $g_{m,n}>0$ for some $m\geq 0$ and some $n \geq 2$.
\item[$(c)$] There exist two numbers $r,~s$, such that $0 < r < R$ and $0 < s < S$, satisfying the system of equations
$$G(r, s)=s~ \textrm{ and }~G_w(r, s)=1,$$
which is called the \emph{characteristic system} (where $G_w$ denotes the derivative of $G$ with respect to its second variable).
\end{enumerate}
\end{Def}

Recall that a series $\gW(t) = \sum_{n\geq 0}w_nt^n$ is said
\emph{aperiodic} if there exists integers $i<j<k$ such that the
coefficients of $w_i$, $w_j$,  $w_k$ are non-zero and
$\mathrm{gcd}(j-i,k-i)=1$.

\begin{lemma}[\cite{Meier:asymptotic-method}]\label{lem:asymptotic-method}
Let $\gW(t)$ be an aperiodic function satisfying the smooth implicit-function schema defined by $G(t,w)$ and let $(r,s)$ be the positive solution of the characteristic system. 
Then, the series $\gW(t)$   is analytic in a domain dented at  $t=r$. Moreover, at any order $n\geq 0$, an expansion of the form 
$$\gW(t)~=_{t\to r}~ \sum_{k=0}^n\al_k(1-t/r)^{k/2}+o\left((1-t/r)^{n/2}\right),$$
is valid in this domain, with $\al_0= s$ and $\al_1=-\sqrt{ \frac{2rG_t(r,s)}{G_{w,w}(r,s)}}$.
\end{lemma}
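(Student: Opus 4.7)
The plan is to reduce the statement to the analysis of a non-degenerate critical point of the function $H(t,w) := G(t,w) - w$. The two characteristic equations read exactly $H(r,s)=0$ and $H_w(r,s)=0$, so $(r,s)$ is a critical point of $H$ viewed as a function of $w$ with $t$ as a parameter. The quadratic coefficient $H_{ww}(r,s) = G_{ww}(r,s)$ is strictly positive: condition (b) guarantees that $g_{m,n}>0$ for some $n\geq 2$, and since $r,s>0$ and $G$ has non-negative coefficients, $G_{ww}(r,s)>0$. Similarly, $H_t(r,s)=G_t(r,s)>0$. These two positivity facts are what ultimately produce the square-root singularity with the stated constant.

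Next, I would apply the Weierstrass Preparation Theorem to $H(t,w)$ at $(r,s)$ in the variable $w$. Because $H(r,w)$ has a zero of order exactly $2$ at $w=s$, this yields a factorization
\begin{equation*}
H(t,w) \;=\; U(t,w)\,\bigl[(w-s)^2 + a_1(t)(w-s) + a_0(t)\bigr]
\end{equation*}
in a bidisc around $(r,s)$, with $U,a_0,a_1$ analytic, $U(r,s)=\tfrac{1}{2}H_{ww}(r,s)\neq 0$, and $a_0(r)=a_1(r)=0$. The relation $\gW(t)=G(t,\gW(t))$ means $\gW(t)-s$ is a root of the quadratic in the bracket, so
\begin{equation*}
\gW(t)-s \;=\; \frac{-a_1(t) - \sqrt{\Delta(t)}}{2},\qquad \Delta(t):=a_1(t)^2-4a_0(t),
\end{equation*}
the sign of the square root being forced by the fact that $\gW$ approaches $s$ from below along the real axis. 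Evaluating the factorization at $w=s$ gives $a_0(t)=H(t,s)/U(t,s)$, hence $a_0'(r)=H_t(r,s)/U(r,s)=2G_t(r,s)/G_{ww}(r,s)$, while $a_1(r)=0$ gives $\Delta'(r)=-4a_0'(r)=-8G_t(r,s)/G_{ww}(r,s)$. Therefore
\begin{equation*}
\Delta(t) \;=\; \frac{8\,r\,G_t(r,s)}{G_{ww}(r,s)}\Bigl(1-\frac{t}{r}\Bigr)\,\bigl(1 + O(1-t/r)\bigr),
\end{equation*}
a simple zero. Writing $\sqrt{\Delta(t)} = \sqrt{8rG_t(r,s)/G_{ww}(r,s)}\cdot\sqrt{1-t/r}\cdot\Psi(t)$ with $\Psi$ analytic and $\Psi(r)=1$, and expanding the analytic factors $-a_1(t)/2$ and $\Psi(t)$ as Taylor series in $(1-t/r)$, then multiplying out, produces a Puiseux expansion in $(1-t/r)^{1/2}$ to any order $n\geq 0$. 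The first two coefficients read off as $\alpha_0=s$ and $\alpha_1 = -\sqrt{2rG_t(r,s)/G_{ww}(r,s)}$, as claimed.

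Finally, to go from a local expansion at $t=r$ to analyticity in a dented domain, I would argue as follows. By Pringsheim's theorem, the series $\gW$ with non-negative coefficients has a real singularity on the circle $|t|=\rho$ of convergence, and one checks $\rho=r$ using that for $|t|<r$ the analytic implicit function theorem produces an analytic solution (since $H_w(t,\gW(t))\neq 0$ away from $(r,s)$ by a continuity/monotonicity argument along the real axis). The aperiodicity hypothesis combined with a Daffodil-type lemma (standard for non-negative power series) shows that $r$ is the \emph{unique} singularity of $\gW$ on $|t|=r$. A compactness argument on the circle $|t|=r\setminus\{r\}$ then gives analytic continuation of $\gW$ across this circle, and at $r$ itself the two branches $(-a_1\pm\sqrt{\Delta})/2$ of the quadratic continue analytically into any simply connected region avoiding the ray $[r,+\infty)$; selecting the branch agreeing with $\gW$ on $(0,r)$ provides the continuation into a dented domain of the stated form.

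The main obstacle is the passage from the purely local Weierstrass factorization to a \emph{global} analytic continuation in a dented domain: one must rule out other singularities on $|t|=r$, which is exactly where aperiodicity enters via the Daffodil lemma, and must verify that the locally chosen branch of $\sqrt{\Delta(t)}$ patches consistently across the circle of convergence. The algebraic extraction of $\alpha_0,\alpha_1$ and of the higher-order Puiseux coefficients is, by contrast, a mechanical consequence of the Weierstrass factorization once the critical point is identified.
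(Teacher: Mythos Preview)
The paper does not give its own proof of this lemma: it is stated as a citation of Meir and Moon \cite{Meier:asymptotic-method} (and the authors note it also appears as Theorem~VII.3 in \cite{FlajoletSedgewig:analytic-combinatorics}). So there is nothing to compare against directly.

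Your argument is the standard one and is essentially correct. The route via the Weierstrass Preparation Theorem applied to $H(t,w)=G(t,w)-w$ at the double point $(r,s)$, followed by solving the resulting quadratic in $w-s$, is exactly how this result is derived in \cite{FlajoletSedgewig:analytic-combinatorics}. Your computation of $\alpha_1$ is clean and correct: from $U(r,s)=\tfrac12 G_{ww}(r,s)$ you get $a_0'(r)=2G_t(r,s)/G_{ww}(r,s)$, hence $\Delta'(r)=-8G_t(r,s)/G_{ww}(r,s)$, and the factor $1/2$ in front of $-\sqrt{\Delta}$ turns $\sqrt{8}$ into $\sqrt{2}$.

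Two places deserve a little more care, though neither is a real obstruction. First, the choice of branch: to justify the minus sign in front of $\sqrt{\Delta(t)}$ you invoke ``$\gW$ approaches $s$ from below''. For this you need to know that $\gW(t)<s$ for $0<t<r$ and that $\gW(t)\to s$ as $t\to r^-$; both follow from monotonicity of $G$ in its arguments together with the characteristic system, but it is worth spelling out. Second, in the global step you assert $\rho=r$ and then appeal to aperiodicity plus the Daffodil lemma for uniqueness of the dominant singularity. That is the right outline, but the sentence ``$H_w(t,\gW(t))\neq 0$ away from $(r,s)$ by a continuity/monotonicity argument'' is doing a lot of work: one really argues that along the positive real axis $G_w(t,\gW(t))$ increases from $g_{0,1}\neq 1$ to $1$ and first reaches $1$ exactly at $t=r$, so the implicit function theorem carries $\gW$ analytically up to $r$ and no further. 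With those two points filled in, your proof is complete.
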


\begin{figure}[ht!]\begin{center} \input{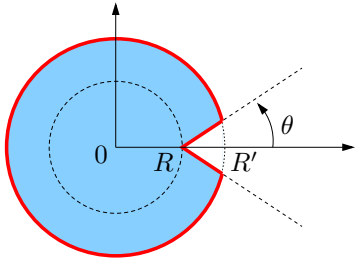}\caption{A domain dented at $R$ (dashed region).}\label{fig:dented-domain} \end{center}
\end{figure}

\begin{proof}[Proof of Lemma~\ref{lem:singularities-trees-general}] 
We first check that Lemma~\ref{lem:asymptotic-method} applies to the series $\gW(t)=\gY_\De(t)-1$.
\begin{itemize}
\item Clearly, the generating function $\gW(t)$ is analytic at $0$, has non-negative coefficients and $\gW(0)=0$. Moreover,  by Lemma~\ref{lem:periodicity}, $\gW(t)=G(t,\gW(t))$ for $G(t,w)=\sum_{k\in K}t^k(w+1)^{kp+1}$.
\item We now check that $\gW(t)$ is aperiodic. Since $p=\gcd(\de-2,~\de\in\De)$, there exist $k,l\in K$ such that $\gcd(k,l)=1$ (this includes the case $k=l=1$). It is easy to see that there exists $\Dee$-valent trees such with $(\al k+\be l)p+1$ non-root leaves for all $\al,\be\leq 0$. Hence, $[t^{\al k+\be l}]\gW(t)\neq 0$ for all $\al,\be>0$. This shows that the generating function $\gW(t)$ is aperiodic.
\item The conditions $(a)$ and $(b)$ clearly holds for $R=S=1$.  The condition $(c)$ holds for $r={\rho_\De}^p$ and $s=\tau_\De/\rho_\De-1$. Indeed with these values
$$G(r,s)=\sum_{k \in K}r^k(s+1)^{kp+1}=\sum_{k \in K}{\rho_\De}^{kp}\parfrac{\tau_\De}{\rho_\De}^{kp+1}=\frac{1}{\rho_\De}\sum_{\de \in \De}{\tau_\De}^{\de-1}=\frac{\tau_\De-\rho_\De}{\rho_\De}=s$$
and
$$G_w(r,s)=\sum_{k \in K}(kp+1)r^k(s+1)^{kp}=\sum_{k \in K}(kp+1)\tau_\De^{kp}=\sum_{\de \in \De}(\de-1)\tau_\De^{\de-2}=1.$$
\end{itemize}
Thus, the series $\gW(t)=\gY_\De(t)-1$ satisfies the conditions of Lemma~\ref{lem:asymptotic-method}. It only remains to show that $\displaystyle\al_1\equiv -\sqrt{\frac{2rG_t(r,s)}{G_{w,w}(r,s)}}=-\frac{\ga_\De}{\rho_\De\sqrt{p}}$.

One has
\begin{eqnarray}
G_t(r,s)&=&\sum_{k\in K}kr^{k-1}(s+1)^{kp+1} = \frac{s+1}{rp}\left(\sum_{k\in K}(kp+1)(r(s+1)^p)^k-\sum_{k\in K}(r(s+1)^p)^k\right) \nonumber\\
&=&\frac{s+1}{rp}\left(1-\frac{s}{s+1}\right)=\frac{1}{rp},\nonumber
\end{eqnarray}
and 
$$\displaystyle G_{w,w}(r,s)= \sum_{k\in K}kp(kp+1)r^k(s+1)^{kp-1}=\rho_\De\sum_{\de\in \De}(\de-1)(\de-2){\tau_\De}^{\de-3}=\frac{2{\rho_\De}^2}{{\ga_\De}^2}.$$
Hence,  $\displaystyle \al_1=-\frac{\ga_\De}{\rho_\De\sqrt{p}}$. This completes the proof of Lemma~\ref{lem:singularities-trees-general}. 
\end{proof}

\section{Determining the constants: functional equations for cubic maps.}\label{section:appendix-constants}
In this section, we give equations determining the constant  $a(\mS)$ appearing in Theorems~\ref{thm:nb-triangular} and~\ref{thm:nb-Dangular}. Recall that for any surface $\mS$ with boundary, $a(\mS)$ denotes the number of cubic schemes of $\mS$. Our method for determining $a(\mS)$ is inspired by the works of Bender and Canfield \cite{Bender:maps-orientable-surfaces} and Gao \cite{Gao:degree-restricted-map-general-surface}. Proofs are omitted.\\

We call \emph{$k$-marked near-cubic} maps the rooted maps having $k$ marked vertices distinct from the root-vertex and such that every non-root, non-marked vertex has degree 3. For any integer $g\geq 0$ we consider the orientable surface of genus $g$ without boundary $\TT_g$ (having Euler characteristic $\chi(\mS)=2-2g$). For any integer $k\geq 0$, we denote by $\mN_{g,k}$ the set of $k$-marked near-cubic maps on $\TT_g$ and we denote by $\gN_{g,k}(x,x_1,x_2,\ldots,x_k)\equiv\gN_{g,k}(z,x,x_1,x_2,\ldots,x_k)$ the corresponding generating function. More precisely,
$$\gN_{g,k}(x,x_1,x_2,\ldots,x_k)=\sum_{M\in \mN_{g,k}}x^{d(M)}x_1^{d_1(M)}\ldots x_k^{d_k(M)}z^{e(M)},$$
where  $e(M)$ is the number of edges, $d(M)$ is the degree of the root-vertex and $d_1(M),\ldots,d_k(M)$ are the respective degrees of the marked vertices (for a natural canonical order of the marked vertices that we do not explicit here).

Similarly, we consider the non-orientable surface of  genus $g$  without boundary $\TTh_g$ (having Euler characteristic $\chi(\mS)=2-g$). We denote by $\mNh_{g,k}$ the set of $k$-marked near-cubic maps on $\TTh_g$ and we denote by $\gNh_{g,k}(x,x_1,x_2,\ldots,x_k)\equiv\gNh_{g,k}(z,x,x_1,x_2,\ldots,x_k)$ the corresponding generating function. \\

Recall that for any surface $\mS$ with boundary, the cubic schemes of $\mS$ have $e=2-3\chi(\mS)$ edges so that 
\begin{equation}\label{eq:extract-a}
a(\mS)~=~\left\{
\begin{array}{ll}
\displaystyle [x^1z^{2-3\chi(\mS)}]\gN_{1-\chi(\mSb)/2,0}(x,z) &\textrm{ if the surface } \mS \textrm{ is orientable,} \\[3pt]
\displaystyle[x^1z^{2-3\chi(\mS)}]\gNh_{2-\chi(\mSb),0}(x,z) &\textrm{ otherwise.}
\end{array}
\right.
\end{equation}

We now give a system of functional equation determining the series
$\gN_{g,k}$ and  $\gNh_{g,k}$ uniquely.
\begin{prop}\label{prop:functionnal-eq}
The series $(\gN_{g,k})_{g,k\in \NN}$ are completely determined (as
power series in $z$ with polynomial coefficient in
$x,x_1,\ldots,x_k$) by the following system of equations:
\begin{eqnarray}\label{eq:nb-schemes}
\gN_{g,k}(x,x_1,\ldots,x_k)&=&c_0~+~\frac{z}{x}\pare{\gN_{g,k}(x,x_1,\ldots,x_k)-c_0-x[x^1]\gN_{g,k}(x,x_1,\ldots,x_k)}\nonumber \\
&& +\frac{x x_k z}{x-x_k}\pare{x\gN_{g,k-1}(x,x_1,\ldots,x_{k-1})-x_k\gN_{g,k-1}(x_k,x_1,\ldots,x_{k-1})}\nonumber \\
&& +x^2z\sum_{i=0}^g\sum_{j=0}^k\gN_{i,j}(x,x_1,\ldots,x_j)\gN_{g-i,k-j}(x,x_{j+1},\ldots,x_k)\nonumber \\
&&+ x^3z\sum_{j=1}^{k+1} \pare{\frac{\partial}{\partial x_j} \gN_{g-1,k+1}(x,x_1,\ldots,x_{k+1})} \bigg|_{x_j=x}\bigg|_{x_{j+1}=x_j}\ldots\bigg|_{x_{k+1}=x_k},
\end{eqnarray}
where $c_0=[x^0]\gN_{g,k}(x,x_1,\ldots,x_k)$ is equal to 1 if $g=k=0$ and 0 otherwise.

Similarly, the series $\gNt_{g,k}=\gN_{g/2,k}+\gNh_{g,k}$  (where $\gN_{g/2,k}$ is 0 if $g$ is odd) are determined by the following system of equations:
\begin{eqnarray}\label{eq:nb-schemes2}
\gNt_{g,k}(x,x_1,\ldots,x_k)&=&c_0~+~\frac{z}{x}\pare{\gNt_{g,k}(x,x_1,\ldots,x_k)-c_0-x[x^1]\gNt_{g,k}(x,x_1,\ldots,x_k)}\nonumber \\
&& +\frac{x x_k z}{x-x_k}\pare{x\gNt_{g,k-1}(x,x_1,\ldots,x_{k-1})-x_k\gNt_{g,k-1}(x_k,x_1,\ldots,x_{k-1})}\nonumber \\
&& +x^2z\sum_{i=0}^g\sum_{j=0}^k\gNt_{i,j}(x,x_1,\ldots,x_j)\gNt_{g-i,k-j}(x,x_{j+1},\ldots,x_k)\nonumber \\
&& + 2x^3z\sum_{j=1}^{k+1} \pare{\frac{\partial}{\partial x_j} \gNt_{g-2,k+1}(x,x_1,\ldots,x_{k+1})}\bigg|_{x_j=x}\bigg|_{x_{j+1}=x_j}\ldots\bigg|_{x_{k+1}=x_k}\nonumber \\
&& +  x^3z\frac{\partial}{\partial x} \gNt_{g-1,k}(x,x_1,\ldots,x_{k}).
\end{eqnarray}
\end{prop}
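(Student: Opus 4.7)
The plan is to derive each functional equation by a root-edge decomposition argument, in the spirit of Tutte, and of its higher-genus extensions by Bender--Canfield \cite{Bender:maps-orientable-surfaces} and Gao \cite{Gao:degree-restricted-map-general-surface}. Given a $k$-marked near-cubic map $M\in\mN_{g,k}$ with root-edge $e$ and root-vertex $u$, I would classify $M$ into five mutually exclusive and exhaustive cases according to the nature of $e$: either $M$ has no edges, or the non-root endpoint of $e$ is an unmarked degree-$3$ vertex, or the non-root endpoint of $e$ is one of the marked vertices (and by a canonical relabelling one reduces to the case of the $k$-th one), or $e$ is a separating isthmus, or $e$ is a non-separating loop at $u$. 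Each of these cases corresponds to one of the five terms on the right-hand side of~\Ref{eq:nb-schemes}, obtained by inverting the relevant elementary operation (edge-deletion, isthmus-removal, handle-cut) and translating it into a generating-function identity.

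The first case occurs only for $g=k=0$ and yields the constant $c_0$. In the second case, deleting $e$ and smoothing the resulting degree-$2$ vertex produces a smaller $k$-marked near-cubic map on the same surface, whose root-degree is one less than that of $M$; inverting this gives $\frac{z}{x}(\gN_{g,k}-c_0-x[x^1]\gN_{g,k})$, where the two subtractions remove the degenerate inputs (the vertex-map and root-degree-$1$ configurations) that do not yield valid reconstructions. In the third case, contracting $e$ merges the root-vertex with the $k$-th marked vertex into a new root-vertex of degree $d+d'-1$, where $d,d'$ are the original degrees, and the classical finite-difference identity produces the rational contribution $\frac{xx_kz}{x-x_k}(x\gN_{g,k-1}(x,x_1,\ldots,x_{k-1})-x_k\gN_{g,k-1}(x_k,x_1,\ldots,x_{k-1}))$. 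In the fourth case, removing $e$ disconnects $M$ into two $j$- and $(k-j)$-marked near-cubic maps on $\TT_i,\TT_{g-i}$ with $i+(g-i)=g$; reattaching $e$ as an isthmus gives the convolution $x^2z\sum_{i,j}\gN_{i,j}\gN_{g-i,k-j}$, the factor $x^2z$ recording the contribution of $e$ and of its two endpoints. In the fifth case, cutting the non-separating loop lowers the genus by one and creates a new vertex at the cut which is then marked as the $(k+1)$-th marked vertex; summing over the $k+1$ admissible positions of this new marker in the canonical ordering (the source of the derivative $\frac{\partial}{\partial x_j}$), and re-identifying it with the root through the cascade of substitutions $x_j=x,\, x_{j+1}=x_j,\ldots,x_{k+1}=x_k$, produces the final term.

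For the non-orientable equation~\Ref{eq:nb-schemes2}, the same strategy applies, but in the fifth case a non-separating loop may now be either two-sided (realising a handle, which in the non-orientable genus lowers $g$ by~$2$) or one-sided (realising a cross-cap, which lowers $g$ by~$1$). A handle cut carries a combinatorial factor~$2$ accounting for the two inequivalent identifications at the cut, whereas a cross-cap cut produces only a single new vertex that is directly identified with the root, thereby contributing the last, non-derivative term $x^3z\,\frac{\partial}{\partial x}\gNt_{g-1,k}$. Packaging the orientable and non-orientable series of Euler characteristic $2-g$ into the combined $\gNt_{g,k}=\gN_{g/2,k}+\gNh_{g,k}$ then unifies both families into the single recursion~\Ref{eq:nb-schemes2}.

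Finally, uniqueness of the solution should follow by induction on the triple $(n,g,k)$, where $n$ is the $z$-degree (equivalently, the number of edges): each term on the right-hand side of~\Ref{eq:nb-schemes} involves either coefficients $[z^m]\gN_{g',k'}$ with $m<n$, or the series $\gN_{g,k-1}$ with smaller $k$, or the series $\gN_{g-1,k+1}$ with smaller $g$. The only self-referential term is $\frac{z}{x}\gN_{g,k}(x,\ldots)$; but its contribution to the coefficient $[x^d]\gN_{g,k}$ involves only the higher-degree coefficient $[x^{d+1}]\gN_{g,k}$, and since $[z^n]\gN_{g,k}$ is polynomial in $x$ (of degree bounded by $2n$), a reverse induction on $d$ closes the recursion. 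The main technical obstacle I would expect is case~(3), where one has to verify carefully that the degree-merger of the root-vertex with the $k$-th marked vertex and the simultaneous re-labelling of the marked-vertex list produce precisely the rational difference quotient displayed in~\Ref{eq:nb-schemes}; the remaining cases generalise classical planar bijections by straightforward genus bookkeeping.
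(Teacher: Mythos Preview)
Your overall strategy coincides with what the paper indicates (the proof there is omitted, with only a one-sentence description of each summand): a Tutte-style root-edge analysis, and your uniqueness argument by induction on $(n,g,k)$ together with reverse induction on the root-degree is the right way to close the recursion.

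There is, however, a genuine mis-identification in your case~4. The fourth summand does \emph{not} correspond to the root-edge being a separating isthmus; it corresponds to the root-edge being a separating \emph{loop} at the root-vertex. If the root-edge were a non-loop bridge, its other endpoint would be a non-root vertex, either unmarked of degree~$3$ or marked, and that situation is already exhausted by your cases~2 and~3 (whether or not the edge happens to be a bridge plays no role there). Conversely, a separating loop at the root is not covered by any of your cases~2,~3,~5, so read literally your decomposition is neither disjoint nor exhaustive. The factor $x^2z$ is in fact the fingerprint of a loop: both edge-ends sit at the root-vertex, and deleting the loop splits the surface into two rooted pieces, each inheriting one of those ends as its root-end. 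For a genuine isthmus only one end is at the root, and the factor would be $xz$.

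Two smaller slips. In case~2, the operation matching the factor $\tfrac{z}{x}$ is \emph{contraction} of the root-edge: merging the root with the adjacent degree-$3$ vertex yields a near-cubic map with one fewer edge and root-degree one \emph{larger} (namely $d+1$, not $d-1$). Deleting and smoothing would remove two edges and produce the factor $\tfrac{z^2}{x}$. In case~3, contracting an edge between vertices of degrees $d$ and $d'$ gives a vertex of degree $d+d'-2$, not $d+d'-1$; this is exactly what the divided-difference term encodes, since $\frac{x^{D+1}-x_k^{D+1}}{x-x_k}=\sum_{i=0}^{D}x^{i}x_k^{D-i}$ with $D=d+d_k-2$.
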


The proof of Proposition~\ref{prop:functionnal-eq} is omitted. We only indicate that the second summand in the right-hand-side of Equation~\Ref{eq:nb-schemes} (resp. Equation~\Ref{eq:nb-schemes2}) corresponds to maps in $\mN_{g,k}$ (resp.  $\mNh_{g/2,k}\cup\mNh_{g,k}$) such that the root-edge joins the root-vertex to a non-root non-marked vertex; the  third summands corresponds to maps such that the root-edge joins the root-vertex to a marked vertex;  the  fourth summand corresponds to maps such that the root-edge is a loop which separates the surface $\TT_g$ (resp. $\TTh_g$) into two connected-components; the  fifth  summand (resp. fifth and sixth summands) corresponds to maps such that the root-edge is a loop which does not separate the surface.\\

Proposition~\ref{prop:functionnal-eq} together with Equation~\Ref{eq:extract-a} give a recursive way for computing the constants $a(\mS)$ for any surface $\mS$. The first values are given for orientable surfaces in Table~\ref{table:nb-schemes-orientable} and for non-orientable surfaces and in Table~\ref{table:nb-schemes-nonorientable}. The first line in Table~\ref{table:nb-schemes-orientable} corresponds to rooted planar cubic maps with $\be$ edges. These maps where enumerated in \cite{Mullin:cubic-maps} and a nice formula exists in this case:
$$a(\mS)~=~\frac{2^{\be(\mS)}(3\be(\mS)-6)!!}{8\, \be(\mS)!(\be(\mS)-2)!!}.$$
The first column in Table~\ref{table:nb-schemes-orientable} corresponds to rooted cubic maps with a single face on the $g$-torus. These maps where first enumerated by Lehman and Walsh \cite{Walsh:counting-maps-1}. Indeed, a special case of \cite[Equation (9)]{Walsh:counting-maps-1} gives the following formula
$$a(\mS)~=~\frac{2(6g-3)!}{12^g g! (3g-2)!},$$
which was also proved bijectively in \cite{Chapuy:cubic-unicellular}.

\begin{table}[h]
\begin{center}
\begin{tabular}{l|llll}
Orientable & $\be(\mS)=1$ &$\be(\mS)=2$ &$\be(\mS)=3$ &$\be(\mS)=4$ \\\hline\vspace{-.3cm} \\
Genus 0: $\chi(\mSb)=~2$& 0 & 1 & 4 & 32 \\ 
Genus 1: $\chi(\mSb)=~0$& 1 & 28 & 664 & 14912 \\
Genus 2: $\chi(\mSb)=-2$& 105 & 8112 & 396792 & 15663360 \\
Genus 3: $\chi(\mSb)=-4$& 50050 & 6718856 & 51778972 & 30074896256\\
\end{tabular}
\caption{The number $a(\mS)$ of cubic schemes of orientable surfaces.}\label{table:nb-schemes-orientable}
\end{center}
\end{table}

\begin{table}[h]
\begin{center}
\begin{tabular}{l|llll}
Non-orientable    & $\be(\mS)=1$ &$\be(\mS)=2$ &$\be(\mS)=3$ &$\be(\mS)=4$ \\\hline\vspace{-.3cm} \\
Genus 1: $\chi(\mSb)=~1$& 1 & 9 & 118 & 1773 \\
Genus 2: $\chi(\mSb)=~0$& 6 & 174 & 4236 & 97134\\
Genus 3: $\chi(\mSb)=-1$& 128 & 6786 & 249416 & 7820190\\
Genus 4: $\chi(\mSb)=-2$& 3780 & 301680 & 15139800 & 610410600\\
\end{tabular}
\caption{The number $a(\mS)$ of cubic schemes of non-orientable surfaces.}\label{table:nb-schemes-nonorientable}
\end{center}
\end{table}

\bibliography{biblio-simplicial}
\bibliographystyle{abbrv}

\end{document}